\newcounter{denseversion}
\newcounter{comments}
\newcounter{authorcounter}
\newcounter{adresscounter}
\def\title#1{\gdef\@title{#1}}
\def\@title{}
\def\subtitle#1{\gdef\@subtitle{#1}}
\def\@subtitle{}
\def\authortagsused{0}
\def\adresstag#1{\if!#1!\else$^{\;#1\;}$\fi}
\def\@authorsep#1{
  \ifnum\value{authorcounter}=#1 and \else\unskip, \fi
}
\renewcommand{\author}[2][]{
  \stepcounter{authorcounter}
  \if!#1!\else\gdef\authortagsused{1}\fi
  \ifnum\value{authorcounter}=1
    \def\@authorstringa{#2\adresstag{#1}}
    \def\@authorstringb{#2}
    \def\@authorstringc{#2\adresstag{#1}}
  \else
    \ifnum\value{authorcounter}=2
      \g@addto@macro\@authorstringa{\@authorsep{2}#2\adresstag{#1}}
      \g@addto@macro\@authorstringb{\@authorsep{2}#2}
      \g@addto@macro\@authorstringc{\\#2\adresstag{#1}}
    \else
      \g@addto@macro\@authorstringa{\@authorsep{3}#2\adresstag{#1}}
      \g@addto@macro\@authorstringb{\@authorsep{3}#2}
      \g@addto@macro\@authorstringc{\\#2\adresstag{#1}}
    \fi
  \fi}
\def\@author{\ifnum\value{denseversion}=0\@authorstringa\else\@authorstringb\fi}
\def\@adressstringa{}
\def\@adressstringb{}
\newcommand{\adress}[2][]{
  \stepcounter{adresscounter}
  \ifnum\value{adresscounter}=1
    \g@addto@macro\@adressstringa{\ifnum\authortagsused=0\def\br{\\}\else\def\br{, }\fi\adresstag{#1}#2}
    \g@addto@macro\@adressstringb{\def\br{\\}\adresstag{#1}\parbox[t]{14cm}{#2}}
  \else
    \g@addto@macro\@adressstringa{\\[\bigskipamount]\adresstag{#1}#2}
    \g@addto@macro\@adressstringb{\\[\medskipamount]\adresstag{#1}\parbox[t]{14cm}{#2}}
  \fi}
\def\preprint#1{\gdef\@preprint{#1}}
\def\@preprint{}
\def\keywords#1{\gdef\@keywords{#1}}
\def\@keywords{}
\def\msc#1{\gdef\@msc{#1}}
\def\@msc{}
\def\email#1{
   \gdef\@email{#1}
   \g@addto@macro\@authorstringc{ {\it (#1)}}}
\def\@email{}
\def\dedication#1{\gdef\@dedication{#1}}
\def\@dedication{}
\def\mybaselinestretch#1{
  \gdef\@mybaselinestretch{#1}
  \renewcommand{\baselinestretch}{\@mybaselinestretch}}
\def\myparskip#1{
  \gdef\@myparskip{#1}
  %\setenumstandard
  \setlength{\parskip}{\@myparskip}}
\newlength{\@listleftmargin}
\def\setenumstandard{
  \setlist{leftmargin=\@listleftmargin,itemsep=0pt,topsep=0pt,partopsep=0pt,parsep=\@myparskip}
  \setlist[enumerate]{align=left,labelsep=*,leftmargin=\@listleftmargin,itemsep=0pt,topsep=0pt,partopsep=0pt,parsep=\@myparskip}
}
\def\denseversion{
  \setcounter{denseversion}{1}
  \newgeometry{left=3cm,right=3cm,top=3cm}
  \mybaselinestretch{1.1}
  \myparskip{0.8ex}
  \normalfont
  \def\possiblelinebreak{}
  \fancyfoot[C]{\itshape{--$\,\,$\thepage$\,\,$--}}}
\def\possiblelinebreak{\\}
\renewcommand{\emph}[1]{\def\reserved@a{it}\ifx\f@shape\reserved@a\ul{#1}\else\textit{#1}\fi}
\def\setcrefnames{}
\newcommand{\mytableofcontents}{
   \ifnum\value{denseversion}=0
     \tableofcontents
     \setcrefnames %% \tableofcontents macht die \crefnames kaputt
   \else
     \renewcommand{\baselinestretch}{1.1}
     \setlength{\parskip}{0ex}
     \normalfont
     \begingroup
     \def\addvspace##1{\vskip0.4em}
     \tableofcontents
     \setcrefnames %% \tableofcontents macht die \crefnames kaputt
     \endgroup
     \renewcommand{\baselinestretch}{\@mybaselinestretch}
     \setlength{\parskip}{\@myparskip}
     \normalfont
   \fi}
\newlength{\zeilenlaenge}
\def\putindent#1{
  \settowidth{\zeilenlaenge}{#1}
  \ifnum\zeilenlaenge>\textwidth
    #1
  \else
    \noindent #1
  \fi
}
\def\pdfdaten{
  \hypersetup{
    pdftitle = {\@title},
    pdfauthor = {\@author},
    pdfkeywords = {\@keywords},    
    bookmarksopen = true,
    bookmarksopenlevel = 1
  }}  
\def\showkeywords{\begin{flushleft}\footnotesize\textbf{Keywords}: \@keywords\end{flushleft}}
\def\showmsc{\begin{flushleft}\footnotesize\textbf{MSC 2010}: \@msc\end{flushleft}}
\def\mytitle{}
\def\zmptitle{
  \begin{tabular}{cc}
    \begin{minipage}[c]{0.4\textwidth}
      \begin{flushleft}
        \includegraphics[width=110pt]{../../tex/zmp}
      \end{flushleft}  
    \end{minipage}&
    \begin{minipage}[c]{0.55\textwidth}
      \begin{flushright}
      {\small\sf\@preprint}
      \end{flushright}
    \end{minipage}
  \end{tabular}
  \vskip 2cm}
\def\maketitle{
  \pdfdaten
  \noindent
  \mytitle
  \begin{center}
    \LARGE\@title\\
    \if!\@subtitle!\else\smallskip\LARGE\@subtitle\\\fi
    \bigskip
    \if!\@author!\else\bigskip\large\@author\\\fi
    \ifnum\value{denseversion}=0
      \if!\@adressstringa!\else\bigskip\normalsize\@adressstringa\\\fi
      \if!\@email!\else\ifnum\value{authorcounter}=1\bigskip\normalsize\textit{\@email}\\\else\fi\fi
    \else
    \fi
    \if!\@dedication!\else\bigskip\normalsize{\@dedication}\\\fi
  \end{center}
  \ifnum\value{denseversion}=0\vskip 1.5cm\else\vskip0.5cm\fi}
\def\kobib#1{
  \begin{raggedright}
  \ifnum\value{denseversion}=0\else\small\fi
  \Oldbibliography{#1/kobib}
  \bibliographystyle{#1/kobib}
  \end{raggedright}
  \ifnum\value{denseversion}=0\else
      \noindent
      \if!\@authorstringc!\else
        \ifnum\authortagsused=0\ifnum\value{authorcounter}>1\normalsize\@authorstringc\\[\medskipamount]\else\fi\else\normalsize\@authorstringc\\[\medskipamount]\fi
      \fi
      \if!\@adressstringb!\else\normalsize\@adressstringb\\{}\fi
      \ifnum\authortagsused=0
        \ifnum\value{authorcounter}=1
          \if!\@email!\else\linebreak\normalsize\textit{\@email}\\{}\fi
        \else
        \fi
      \else
      \fi
  \fi}
\let\Oldbibliography\bibliography
\def\bibliography#1{
  \begin{raggedright}
  \ifnum\value{denseversion}=0\else\small\fi
  \Oldbibliography{#1}
  \end{raggedright}
  \ifnum\value{denseversion}=0\else
      \medskip
      \noindent
      \if!\@authorstringc!\else
        \ifnum\authortagsused=0\ifnum\value{authorcounter}>1\normalsize\@authorstringc\\[\medskipamount]\else\fi\else\normalsize\@authorstringc\\[\medskipamount]\fi
      \fi
      \if!\@adressstringb!\else\normalsize\@adressstringb\\{}\fi
      \ifnum\authortagsused=0
        \ifnum\value{authorcounter}=1
          \if!\@email!\else\linebreak\normalsize\textit{\@email}\\{}\fi
        \else
        \fi
      \else
      \fi
  \fi
}
\newenvironment{commentfigure}{\begin{comment}}{\end{comment}}
\newenvironment{sidewayscommentfigure}{\begin{minipage}}{\end{minipage}}
\newenvironment{displaycomment}{\begin{list}{}{\rightmargin=1cm\leftmargin=1cm}\item\sf\begin{small}\color{gray}}{\end{small}\end{list}}
\def\tocmark#1{}
\def\draftstamp#1{
  \def\tocmark##1{
    \ifnum\c@secnumdepth=0\section{##1}\fi
    \ifnum\c@secnumdepth=1\subsection{##1}\fi
    \ifnum\c@secnumdepth=2\subsubsection{##1}\fi
    \ifnum\c@secnumdepth=3\subsubsection{##1}\fi
  }
  \ifnum\value{comments}=0
    \gdef\@draft{DRAFT - Edited on \today\ by #1 - Comments are not displayed}
  \else
    \gdef\@draft{DRAFT - Edited on \today\ by #1 - Comments are displayed}
  \fi
  \fancyhead[C]{\footnotesize\tt\textcolor{red}{\@draft}}}
\def\skript{
  \renewenvironment{displaycomment}{}{}
  \ifnum\value{comments}=0
    \renewenvironment{example*}{\comment}{\endcomment}
    \renewenvironment{remark*}{\comment}{\endcomment}
  \else\fi
  \parindent=0mm        
}
\def\ul{\underline}
\def\N {\mathbb{N}}
\def\Z {\mathbb{Z}}
\def\R {\mathbb{R}}
\def\C {\mathbb{C}}
\def\hc#1{\mathrm{h}_{#1}}
\def\h {\mathrm{H}}
\def\subset{\subseteq}
\renewenvironment{proof}[1][\nameProof]
  {\par\pushQED{\qed}%
   \normalfont \topsep6\p@\@plus6\p@\relax
   \trivlist
   \item[\hskip\labelsep
         \itshape
         #1\@addpunct{.}]
  \leavevmode}
  {\popQED\endtrivlist\@endpefalse}
\def\notebox#1#2{\begin{minipage}[b]{#1}\sloppy\renewcommand{\baselinestretch}{0.8}\footnotesize \begin{center}#2\end{center}\end{minipage}}
\newcommand{\arr}[1][r]{\ar@<0.7ex>[#1]\ar@<-0.7ex>[#1]}
\newcommand{\arrr}[1][r]{\ar@<1.4ex>[#1]\ar[#1]\ar@<-1.4ex>[#1]}
\newlength{\myeqt} % Darin wird die Länge des übergebenen Textes abgespeichert
\newlength{\myeqs} % Darin wird die Länge des übergebenen Symbolds abgespeichert
\newlength{\myeqm} % Wieviel "klein" über die Breite des Symbolds hinausgehen darf
\newlength{\myeqn} % Die Standardbreite für große Boxen
\newcommand\symtext[3][\myeqn]{
  \settowidth{\myeqt}{#2}
  \settowidth{\myeqs}{$#3$}
  \addtolength{\myeqs}{\the\myeqm}
  \ifdim\myeqt>\myeqs
    %groß
    \stackrel{\hspace{-#1}\notebox{#1}{\medskip #2 \\ $\downarrow$\smallskip}\hspace{-#1}}{#3}
  \else
    %klein
    \stackrel{\text{#2}}{#3}
  \fi}
\def\brackets#1{\IfStrEq{#1}{-}{}{(#1)}}
\def\subindex#1{\IfStrEq{#1}{-}{}{_{#1}}}
\newlength{\myl}
\def\ddt#1#2#3{\left.\frac{\mathrm{d}^{\IfStrEq{#1}{1}{}{#1}}}{\mathrm{d}#2}\IfStrEq{#2}{#3}{\right.}{\right|_{#3}}}
\def\lw#1#2{{}^{#1\!}#2}
\newlength{\widthtmp}
\def\length#1{\settowidth{\widthtmp}{#1}\the\widthtmp}
\definecolor{olivegreen}{rgb}{.33,.55,.18}
\newcommand{\ie}{i.e., }
\newcommand{\eg}{e.g., }
\newcommand{\mc}[1]{\mathcal{#1}}
\def\pss#1{\prescript{}{#1}}
\newcommand{\GL}{\operatorname{GL}}
\newcommand{\Cl}{\operatorname{Cl}}
\newcommand{\CCl}{\C\!\operatorname{l}}
\newcommand{\opp}{\operatorname{op}}
\newcommand{\pr}{\operatorname{pr}}
\newcommand{\lact}{\triangleright}
\newcommand{\ract}{\triangleleft}
\newcommand{\id}{\operatorname{id}}
\newcommand{\unit}{\mathbf{1}}
\newcommand{\Homcat}{\mathscr{H}\mathrm{om}}
\newcommand{\Endcat}{\mathscr{E}\!\mathrm{nd}}
\newcommand{\Hom}{\mathrm{Hom}}
\newcommand{\End}{\mathrm{End}}
\newcommand{\Aut}{\mathrm{Aut}}
\newcommand{\Out}{\mathrm{Out}}
\newcommand{\AUT}{\mathscr{A}\mathrm{ut}}
\newcommand{\Vect}{\mathscr{V}\mathrm{ect}}
\newcommand{\sVect}{\mathrm{s}\Vect}
\newcommand{\Bimod}{\mathscr{B}\mathrm{imod}}
\newcommand{\BimodBdl}{\mathscr{B}\mathrm{im}\mathscr{B}\mathrm{dl}}
\newcommand{\sBimodBdl}{\mathrm{s}\BimodBdl}
\newcommand{\sBimodBdlimp}{\mathrm{s}\BimodBdl^{\mathrm{imp}}}
\newcommand{\Alg}{\Incl\mathrm{lg}}       
\newcommand{\Algbi}{\Alg^{\mathrm{bi}}}   
\newcommand{\Algbiimp}{\Alg^{\mathrm{imp}}}   
\newcommand{\Algbigrpd}[1]{\mathrm{Grpd}(\Alg^{\mathrm{bi}}_{#1})}   
\newcommand{\sAlg}{\mathrm{s}\Alg}    
\newcommand{\sAlggrpd}[1]{\mathrm{Grpd}(\mathrm{s}\Alg_{#1})}    
\newcommand{\sAlgbi}{\sAlg^{\mathrm{bi}}}   
\newcommand{\sAlgbigrpd}[1]{\mathrm{Grpd}(\sAlg^{\mathrm{bi}}_{#1})}   
\newcommand{\sAlgbiimp}{\sAlg^{\mathrm{imp}}}   
\newcommand{\csAlg}{\mathrm{cs}\Incl\mathrm{lg}}       
\newcommand{\cssAlg}{\mathrm{css}\Alg}       
\newcommand{\cssAlgbi}{\mathrm{css}\Algbi}       
\newcommand{\ssAlg}{\mathrm{ss}\Alg}       
\newcommand{\ssAlgbi}{\mathrm{ss}\Algbi}       
\newcommand{\sssAlg}{\mathrm{ss\text{-}s}\Alg}       
\newcommand{\sssAlgbi}{\mathrm{ss\text{-}s}\Algbi}       
\newcommand{\AlgBdl}{\Alg\mathscr{B}\mathrm{dl}}       
\newcommand{\sssAlgBdl}{\mathrm{ss\text{-}s}\AlgBdl}       
\newcommand{\ssAlgBdl}{\mathrm{ss}\AlgBdl}       
\newcommand{\AlgBdlbi}{\Alg\mathscr{B}\mathrm{dl}^{\mathrm{bi}}}
\newcommand{\sAlgBdlbi}{\sAlg\mathscr{B}\mathrm{dl}^{\mathrm{bi}}}
\newcommand{\sAlgBdlbigrpd}[2]{\mathrm{Grpd}(\sAlg\mathscr{B}\mathrm{dl}^{\mathrm{bi}}_{#1}\brackets{#2})} 
\newcommand{\AlgBdlbigrpd}[2]{\mathrm{Grpd}(\Alg\mathscr{B}\mathrm{dl}^{\mathrm{bi}}_{#1}\brackets{#2})} \newcommand{\sssAlgBdlbi}{\sssAlg\mathscr{B}\mathrm{dl}^{\mathrm{bi}}}
\newcommand{\ssAlgBdlbi}{\ssAlg\mathscr{B}\mathrm{dl}^{\mathrm{bi}}}      
\newcommand{\sAlgBdl}{\sAlg\mathscr{B}\mathrm{dl}}       
\newcommand{\sAlgBdlgrpd}[2]{\mathrm{Grpd}(\sAlg\mathscr{B}\mathrm{dl}_{#1}\brackets{#2})} 
\newcommand{\AlgBdlgrpd}[2]{\mathrm{Grpd}(\Alg\mathscr{B}\mathrm{dl}_{#1}\brackets{#2})}
\newcommand{\cssAlgBdlbi}{\cssAlg\mathscr{B}\mathrm{dl}^{\mathrm{bi}}} 
\newcommand{\csAlgBdlbi}{\csAlg\mathscr{B}\mathrm{dl}^{\mathrm{bi}}}       
\newcommand{\Impcat}{\mathscr{I}\mathrm{mp}}
\newcommand{\Mfd}{\mathscr{M}\mathrm{fd}}
\newcommand{\Des}{\mathscr{D}\mathrm{es}}
\newcommand{\Incl}{\mathscr{A}}
\def\quot#1{``#1''}
\def\quand{\quad\text{ and }\quad}
\def\quomma{\quad\text{, }\quad}
\def\nameProof{Proof}
\def\mathscr#1{\EuScript{#1}}
\title{The insidious bicategory of algebra bundles}
\author[a]{Peter Kristel}
\email{peter.kristel@umanitoba.ca}
\author[b]{Matthias Ludewig}
\email{matthias.ludewig@mathematik.uni-regensburg.de}
\author[c]{Konrad Waldorf}
\email{konrad.waldorf@uni-greifswald.de}
\keywords{}
\begin{document}

\maketitle

\begin{abstract}
\noindent
In this paper we construct a bicategory of (super) algebra bundles over a smooth manifold, where the  1-morphisms are  bundles of bimodules. The main point is that naive definitions of bimodule bundles will not lead to a well-defined composition law in such a bicategory, at least not if non-invertible bimodules and non-semisimple algebras are desired. This  problem  has not been addressed so far in the literature. We develop a complete solution, and also address symmetric monoidal structures as well as the corresponding questions of dualizability.      
%\showmsc
%\howkeywords
\end{abstract}

\mytableofcontents

\setsecnumdepth{1}

\section{Introduction}

Algebras over a field $k$ usually form a category $\Alg_k$ whose morphisms are algebra homomorphisms, but they also form a bicategory $\Algbi_k$, whose  1-morphisms are bimodules and whose 2-morphisms are bimodule intertwiners. 
The composition is the relative tensor product of two bimodules over an algebra. 
In the context of smooth functorial field theories, the need of a \emph{bundle version} of this bicategory arose, with objects algebra bundles and 1-morphisms bimodule bundles. 
This bicategory is usually considered to be straightforward to define and not worth the paper needed to spell out the details.

The present article is devoted to the fact that this popular belief is misguided. 
While it is correct that algebra bundles and bimodule bundles can be defined in a straightforward way, it is not correct that such bimodule bundles admit a relative tensor product over an algebra bundle; hence, there is no well-defined composition law when the bicategorical structure is set up naively. 
We have not been able to find any notice of this problem anywhere in the literature, let alone a solution.

The problem lies in the construction of the relative tensor product as the quotient of the ordinary tensor product by a subspace generated from the algebra actions. For bundles, one has to take this quotient fibrewise. Now, when the algebra actions vary while moving through base space, the dimension of above-mentioned subspace is not necessarily locally constant; thus, the resulting object will not even have an underlying locally trivial vector bundle.

One might now try to control the algebra actions so that the dimension does not jump. To be more precise, suppose $\mathcal{A}$ and $\mathcal{B}$ are algebra bundles over a smooth manifold $X$, and $\mathcal{M}$ is a vector bundle over $X$ on which $\mathcal{A}$ and $\mathcal{B}$ act in terms of commuting algebra bundle homomorphisms
\begin{equation*}
\mathcal{A} \to \mathrm{End}(\mathcal{M})
\quand
\mathcal{B}^{\opp} \to \mathrm{End}(\mathcal{M})\text{.} 
\end{equation*}  
A reasonable condition to control these actions is to require that $\mathcal{A}$, $\mathcal{B}$, and $\mathcal{M}$ have typical fibres: algebras $A$ and $B$, and an $A$-$B$-bimodule $M$ such that, around each point of $X$ there are local trivializations
\begin{equation*}
\mathcal{A}|_U \cong U \times A
\quomma
\mathcal{B}|_U \cong U \times B
\quand
\mathcal{M}|_U \cong U \times M
\end{equation*} 
that are fibrewise algebra isomorphisms and bimodule intertwiners along algebra homomorphisms, respectively. This is precisely our  \cref{def:bimodulebundle} of an $\mathcal{A}$-$\mathcal{B}$-bimodule bundle.
Surprisingly, imposing the existence of typical fibres does \emph{not} solve the problem with the relative tensor product. 
An example where the fiberwise relative tensor product of bimodules in the above sense has varying fibre dimensions, hence admits no vector bundle structure, is given in \cref{ExNoTypicalFiber}.

In this article, we propose a further, new condition to be put on bimodules, namely we require them to be \emph{implementing}.
This requires for an $A$-$B$-bimodule $M$ that certain automorphisms of $A$ and $B$ are implemented as automorphisms of $M$. 
Here, an algebra automorphism $\varphi\in \Aut(A)$ is called implemented if there exists  a linear automorphism $u:M \to M$ such that
\begin{equation*}
\varphi(a) \lact m = u^{-1}( a \lact u(m)) 
\end{equation*} 
for all $a\in A$ and $m\in M$. 
We prove then that the relative tensor product of implementing bimodules is again implementing (\cref{PropositionTensorProdImplementing}). 
Thus, we obtain a new bicategory $\Algbiimp_k$ of algebras, implementing bimodules, and intertwiners. 
It is a proper sub-bicategory of the usual bicategory $\Algbi_k$, and the main objective of this article is to show that (only)  this sub-category admits a bundle version.

Our main result, \cref{prop:tensorproductimplementing}, is that the relative tensor product of  implementing bimodule bundles is again an  implementing bimodule bundle. As a consequence, we obtain a well-defined bicategory $\AlgBdlbi_k(X)$ of algebra bundles, implementing bimodule bundles, and intertwining bundle morphisms (\cref{DefinitionPrestwoVectBdl}). We believe that this is the first time that such a bicategory has been properly constructed.

Let us remark at this point that our new requirement for bimodules to be implementing is in fact not very restrictive. We offer two results supporting this:
\begin{enumerate}[(1)]

\item 
Every \emph{invertible} bimodule is implementing (\cref{PropositionInvertibleImplementing}). In particular, we do not change the notion of isomorphism (a.k.a. Morita equivalence) in any of our bicategories.

\item
        Every bimodule over \emph{semisimple} algebras is implementing. Note that semisimple is the same as separable, when working over a field. More generally, we show that every bimodule over algebras with vanishing first Hochschild cohomology is implementing (\cref{PropHochschildImplementing}).

\end{enumerate} 

We also have to point out a disadvantageous consequence of our solution. 
An important feature of the bicategory $\Algbi_k$ of algebras and  bimodules is that it is symmetric monoidal. 
However, the monoidal structure does not restrict to our new sub-bicategory $\Algbiimp_k$ of algebras and implementing bimodules, because the exterior tensor product of implementing bimodules does not need to be implementing (\cref{ExampleImplementing3}). 
This means that our bicategory $\AlgBdlbi_k(X)$ of algebra bundles is \emph{not} monoidal. 
Even more, it means that there is no  monoidal bicategory of algebra bundles at all. However, based on the above-mentioned results, we offer two possible solutions (\cref{prop:symmetricmonoidalstructures}): 
\begin{enumerate}[(a)]

\item 
The underlying sub-bigroupoid $\AlgBdlbigrpd kX$, obtained  by discarding all non-invertible bimodule bundles, is symmetric monoidal.  

\item
The full sub-bicategory $\ssAlgBdlbi_k(X)$ over all algebra bundles with semisimple fibres is symmetric monoidal.

\end{enumerate}
We also investigate the invertible objects in both of these symmetric monoidal bicategories, and find (\cref{lemma:invertibleobjectsinbundles}) that they are given in both cases by algebra bundles with \emph{central simple} fibres. These are classical objects that have been studied and classified by Donovan-Karoubi  \cite{DK70} in the context of topological K-theory.

The quest for symmetric monoidal structures is particularly interesting in the context of \emph{2-vector spaces}. A 2-vector space over a field $k$ is an object in  any symmetric monoidal bicategory $\mathscr{C}$ with $\Endcat_{\mathscr{C}}(\unit,\unit)\cong \Vect_k$, where $\unit$ is the tensor unit.  For the bicategory $\Algbi_k$ of algebras and bimodules, this condition is satisfied; hence, $\Algbi_k$ is a bicategory of 2-vector spaces, and every algebra is a 2-vector space. 
One motivation for this work was the question if the bicategory $\AlgBdlbi_k (X)$  defined here models \emph{2-vector bundles} in analogous way. This idea goes back to contributions of Schreiber at the n-category Caf\'{e} \cite{Schreiber2006,Schreiber2007}, 
see also \cite[\S A]{Schreiber2009} and \cite[\S 4.4]{schreiber2}. 
An important feature of 2-vector bundles (rather, of any bicategorical geometric structure) should be that they form a 2-stack, i.e., they satisfy a local-to-global principle, descent to quotients, etc. 
We thus study our bicategory of algebra bundles in dependence of the manifold $X$, i.e., as a presheaf
\begin{equation*}
X \mapsto \AlgBdlbi_k(X)
\end{equation*} 
 of bicategories on the category of smooth manifolds. We prove (\cref{lem:pre2stack})
that it is a \emph{pre-2-stack}, but we notice (\cref{ex:nota2stack}) that -- in general -- it is not a 2-stack. A further treatment is deferred to our article \cite{Kristel2020}, in which we apply the plus construction of Nikolaus-Schweigert \cite{nikolaus2} to the pre-2-stack $\AlgBdlbi_k$ constructed here. The result is a well-defined 2-stack of 2-vector bundles.

Finally, we remark that we work throughout with \emph{super} algebras, considering the ungraded situation as a special case (everything is concentrated in even degrees). Most of the problems we discuss here, in particular those concerning the relative tensor product of bimodule bundles, appear independently of whether or not algebras are graded. 

The organization of this article is as follows.
In \cref{Section2VectorSpaces} we review the usual bicategory of (super) algebras $\sAlgbi_k$, including its symmetric monoidal structure and a discussion of its dualizable, fully dualizable, and invertible objects. We also compute the automorphism 2-groups of objects of $\sAlgbi_k$, and show that they can be realized as crossed modules of Lie groups (\cref{prop:aut}).  In \cref{SectionImplementingModules} we develop our new bicategory $\sAlgbiimp_k$ centered around the notion of implementing bimodules. In \cref{sec:algebrabundles} we introduce the bundle version of the bicategory $\sAlgbiimp_k$, the bicategory of super algebra bundles $\sAlgBdlbi_k(X)$. Our main result, the well-definedness of the composition in this bicategory (\cref{prop:tensorproductimplementing}) is in \cref{sec:implementingbimodulebundles}.

\paragraph{Acknowledgements.}
We would like to thank Severin Bunk, Christoph Schweigert, and Danny Stevenson for helpful discussions. We thank Yves de Cornulier for describing some beautiful (counter-) examples on Mathoverflow \cite{Cornulier,Cornuliera}, and Jeremy Rickard for explaining to us a result about Picard-surjectivity \cite{Rickard}.
PK gratefully acknowledges support from the Pacific Institute for the Mathematical Sciences in the form of a postdoctoral fellowship. 
ML gratefully acknowledges support from SFB 1085 ``Higher invariants''.

%=============================================================================

\setsecnumdepth{2}

\section{The bicategory of algebras} \label{Section2VectorSpaces}

\label{sec:bicatalgebras}

In this section, we review the well-known symmetric monoidal framed bicategory of algebras, bimodules, and intertwiners. 
Throughout this paper, all algebras and bimodules will be finite-dimensional over the field $k=\R$ or $k=\C$.

\subsection{Algebras and bimodules} \label{sec:AlgebrasAndBimodules}

We consider only $\Z_2$-graded, unital, associative algebras $A$ (in short: super algebras).
If $V$ is a $\Z_{2}$-graded vector space, then $V^{0}$ is the even part of $V$ and $V^{1}$ is the odd part of $V$.
Moreover, if $v \in V$, then the notation $|v|$ implies that $v$ is assumed to be homogeneous, and it indicates the degree of $v$, i.e. $|v| = 0$ if $v \in V^{0}$ and $|v| = 1$ if $v \in V^{1}$.
We will now set up a bicategory $\sAlgbi_k$ whose objects are super algebras.   
 
 If $A$ and $B$ are super algebras, then the 1-morphisms $M:A \to B$  are $\Z_2$-graded, finitely generated $B$-$A$-bimodules $M$ (we will just say \emph{bimodules}).  
The 2-morphisms are bimodule intertwiners, and are always required to be parity-preserving.

If $M:B \to A$ and $N:C \to B$ are 1-morphisms (i.e.,  $M$ is a $A$-$B$-bimodule and $N$ is a $B$-$C$-bimodule), then the composition in $\sAlgbi_k$ is the relative tensor product $M \otimes_B N$ over $B$, which gives an $A$-$C$-bimodule and hence a 1-morphism $C \to A$.
The reason that 1-morphisms from $B$ to $A$ are taken to be $A$-$B$-bimodules (as opposed to $B$-$A$-bimodules), is that we then have $M\circ N = M \otimes_B N$ for the composition of 1-morphisms in our bicategory; consequently we do not have to distinguish between the symbols $\circ$ and $\otimes$. The \emph{identity bimodule} of a super algebra $A$ is  $A$, considered as an $A$-$A$-bimodule in the obvious way.

It is well-known and straightforward
to verify that these definitions yield a bicategory, the bicategory of super algebras $\sAlgbi_k$. 
Note that an $A$-$B$-bimodule $M$ is \emph{invertible} if and only if there exists a $B$-$A$-bimodule $N$ such that $M \otimes_B N \cong A$ and $N \otimes_A M \cong B$.
Thus, two super algebras $A$, $B$ are isomorphic in the bicategory $\sAlgbi_k$ if and only if there exists an invertible $A$-$B$-bimodule $M$; this relation is usually called \emph{Morita equivalence}.

Another bicategory $\Algbi_k$ of (non-super) algebras,  bimodules and intertwiners is defined as the sub-bicategory consisting of purely even super algebras and purely even bimodules. This way, we consider the ungraded situation as being included into our discussion.

\begin{remark} \label{RemarkFinDimBanach}
One reason to restrict to \emph{finite-dimensional} algebras and bimodules is that it is straightforward to talk about smooth bundles with these as fibres.  
We will later also use the fact that finite-dimensional algebras always admit submultiplicative norms (norms satisfying $\|ab\| \leq \|a\|\|b\|$), in other words, they admit the structure of a Banach algebra.
Such a norm can be obtained for example by choosing a scalar product on $A$ and then embedding $A$ into the Banach algebra $\End(A)$ of vector space endomorphisms by sending $a \mapsto \ell_a$, where $\ell_a : b \mapsto ab$ is the left multiplication operator.
We remark that since $\End(A)$ is a $C^*$-algebra, such a norm also satisfies the $C^*$-identity, but $A$ is not necessarily $*$-closed when viewed as subset of  $\End(A)$.
\end{remark}

We describe two well-known constructions with bimodules that will be relevant later. The first is particular to the graded setting. Suppose $M$ is an  $A$-$B$-bimodule. Then, the $A$-$B$-bimodule $\Pi M$ is defined to be the vector space $M$, equipped with the opposite grading, \ie $(\Pi M)_{0} := M_1$ and $(\Pi M)_1 := M_0$, the same left action, and the right action given by $m \ract_{\Pi M} b := (-1)^{|b|}m\ract_M b$. We record the following lemma, which is  straightforward to prove.

\begin{lemma}
\label{lem:Pi}
Suppose $A,B,C$ are super algebras, $M$ is an $A$-$B$-bimodule, and $N$ is a $B$-$C$-bimodule.
\begin{enumerate}[(a)]

\item 
The identity map $M \otimes_B N \to M \otimes_B N$ induces an invertible even intertwiner 
\begin{equation*}
M \otimes_B \Pi N \cong \Pi(M \otimes_B N)\text{.}
\end{equation*}

\item \label{LemmaPiB}
The map $M\otimes_B N \to M\otimes_B N:x\otimes y \mapsto (-1)^{|y|}x \otimes y$ induces an invertible even intertwiner 
\begin{equation*}
\Pi M \otimes_B N \cong \Pi(M \otimes_B N)\text{.}
\end{equation*}
\end{enumerate}
\end{lemma}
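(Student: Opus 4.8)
The plan is to treat both parts uniformly: write down the stated linear map already at the level of the unquotiented tensor product $M \otimes N$, check that it descends to the relative tensor product over $B$, verify compatibility with the left $A$-action, the right $C$-action, and the two gradings, and finally observe bijectivity. Throughout, the bimodule actions carry no Koszul signs (left and right actions simply commute), so the only signs to track are those built into the definition of $\Pi$.

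For part (a), the first observation is that $\Pi N$ and $N$ have the \emph{same} left $B$-action, so the subspace of $M \otimes N$ cut out to form $M \otimes_B \Pi N$ coincides with the one cutting out $M \otimes_B N$; hence the identity of $M \otimes N$ descends to a linear isomorphism $M \otimes_B \Pi N \to M \otimes_B N$. Since $\Pi$ leaves left actions untouched, this map is $A$-linear. For the right $C$-action, the factor $(-1)^{|c|}$ appearing in $n \ract_{\Pi N} c$ is exactly the factor appearing in the right $C$-action on $\Pi(M \otimes_B N)$, so the two cancel and the map is $C$-linear. Evenness holds because passing to $\Pi N$ and then to the outer $\Pi$ both send the summand $M_i \otimes N_j$ to the opposite total degree in the same manner, so the bidegree-by-bidegree identification is parity-preserving. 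Bijectivity is obvious.

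For part (b), set $\phi(x \otimes y) \defeq (-1)^{|y|}\, x \otimes y$ on $M \otimes N$. The key point is the descent: unwinding $m \ract_{\Pi M} b = (-1)^{|b|} m \ract_M b$, a defining relation $(m \ract_{\Pi M} b)\otimes n - m \otimes (b\ract n)$ of $\Pi M \otimes_B N$ reads $(-1)^{|b|}(mb)\otimes n - m \otimes (bn)$, and applying $\phi$ yields $(-1)^{|b|+|n|}\big((mb)\otimes n - m \otimes (bn)\big)$, a nonzero scalar multiple of the corresponding relation of $M \otimes_B N$. Hence $\phi$ carries the one span onto the other and descends to a linear isomorphism $\Pi M \otimes_B N \to \Pi(M \otimes_B N)$. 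Left $A$-linearity is immediate, as $\phi$ is a scalar on each homogeneous tensor and $\Pi$ does not alter left actions. Right $C$-linearity follows because the sign $(-1)^{|c|}$ produced by moving $c$ past the outer $\Pi$ on the target matches the change of $(-1)^{|y|}$ into $(-1)^{|yc|}$ on the source. Evenness again holds since $\phi$ acts by scalars on homogeneous elements, and $\phi$ is visibly its own inverse (the sign squares to $1$), so it is bijective.

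I expect the only step requiring genuine care — the main bookkeeping obstacle — to be the descent in part (b): one must confirm that the twisted right $B$-action on $\Pi M$ generates precisely the sign needed to turn the $\Pi M \otimes_B N$-relations into scalar multiples of the $M \otimes_B N$-relations, and that this sign is consistent with the chosen normalization of $\phi$. Everything else is a routine sign chase.
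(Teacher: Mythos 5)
Your proof is correct and takes the natural direct-verification route: descend the stated map through the relative tensor product, then check $A$-linearity, $C$-linearity, evenness, and bijectivity by tracking the $\Pi$-signs. The paper labels this lemma as straightforward and supplies no proof of its own, so there is nothing to compare against; your argument fills in exactly the expected bookkeeping and it checks out.
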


The second construction is the usual way of twisting bimodules by super algebra homomorphisms (\ie even algebra homomorphisms). 
Given a super algebra homomorphism $\phi:A' \to A$ and an $A$-$B$-bimodule $M$, there is an $A'$-$B$ bimodule $\pss{\phi} M$ with underlying vector space $M$, the right $B$-action  as before and the left $A'$-action induced along $\phi$. 
Further, if $\psi: B' \rightarrow B$ is another super algebra homomorphism, then we obtain a $A$-$B'$-bimodule $M_\psi$ in a similar way. 
Both twistings can be performed simultaneously, resulting in an $A'$-$B'$-bimodule $\pss{\phi}M_{\psi}$.
If $N$ is another $A'$-$B'$-bimodule, then an intertwiner $u: N \to \pss{\phi}M_{\psi}$ is the same as an {intertwiner along $\phi$ and $\psi$}, \ie an even linear map satisfying 
\begin{equation} \label{IntertwiningCondition}
u(a \lact n \ract b) = \phi(a)\lact u(n) \ract \psi(b), \qquad n\in N, ~~ a\in A', ~~b\in B'.
\end{equation} 
Here, we note the following rules, which are straightforward to check.  

\begin{lemma} 
Let $A$, $B$, $C$, $D$ be super algebras.
\label{lem:framing}
\begin{enumerate}[(a)] 

\item 
\label{lem:framing:a}
Let $\phi:A \to B$ and $\psi:B \to C$ be super algebra homomorphisms and let $M$ be a $D$-$C$-bimodule. 
Then the map
\begin{equation*}
c_{\phi,\psi}: M_{\psi} \otimes_B  B_{\phi} \to M_{\psi \circ \phi}: m \otimes b \mapsto m \ract \psi(b)
\end{equation*}
is well defined and an invertible intertwiner of $D$-$A$-bimodules. 

\item 
\label{lem:framing:b}
If $\phi:A \to B$,  $\psi:B \to C$ and $\tau: C \to D$ are super algebra homomorphisms, there is a commutative diagram
\begin{equation} \label{CompositorCoherence}
\xymatrix@C=5em@R=3em{D_{\tau} \otimes_C C_{\psi} \otimes_B B_{\phi} \ar[d]_-{c_{\psi,\tau} \otimes \id_B} \ar[r]^-{\id_D \otimes c_{\phi,\psi}} &  D_{\tau} \otimes_C C_{\psi \circ \phi} \ar[d]^{c_{\psi\circ\phi,\tau}} \\ D_{\tau \circ \psi} \otimes_B B_{\phi} \ar[r]_{c_{\phi,\tau\circ\psi}} & D_{\tau\circ \psi \circ\phi}\text{.}} 
\end{equation}

\item \label{LammaIdentityBimoduleInvertible}
Suppose $\phi:A \to B$ is a super algebra homomorphism. Then, the bimodule $B_{\phi}$ is invertible if and only if $\phi$ is an isomorphism. In that case, $A_{\phi^{-1}}$ is an inverse bimodule. 
\begin{comment}
If $\phi$ is an isomorphism, then by (a) it is clear that $A_{\phi^{-1}}$ is an inverse. Conversely, suppose $M$ is a $A$-$B$-bimodule and we have invertible intertwiners $\varepsilon : B_{\phi} \otimes_A M\to B$ and $\eta: M \otimes_B B_{\phi}\to A$. 
Then, $\eta$ provides an isomorphism $\tilde{\eta} : M \to A$ as a left $A$-module, explicitly given by
\begin{equation*}
\tilde{\eta} : M \to A, \qquad x \mapsto \eta(x \otimes 1_B).
\end{equation*}
We have
\begin{equation*}
\begin{aligned}
\tilde{\eta}(x \ract \phi(a)) &= \eta(x \otimes \phi(a))\\
&= \eta(x \otimes 1 \ract_\phi a)\\
&= \eta(x \otimes 1) \ract a \\
&= \tilde{\eta}(x) \ract a.
\end{aligned}
\end{equation*}
Therefore, $\tilde{\eta}$ is an isomorphism of $A$-$A$-bimodules $M_\phi \cong A$, or, equivalently, an isomorphism of $A$-$B$-bimodules $M \cong A_{\phi^{-1}}$.
\end{comment}

\item
\label{lem:framing:d}
Suppose $\phi,\psi:A \to B$ are super algebra homomorphisms. Then, the map
\begin{equation*}
\{b\in B_0 \mid \forall a\in A: \phi(a)b=b\psi(a) \} \to \mathrm{Hom}_{B\text{-}A}(B_{\phi},B_{\psi}):b \mapsto r_b 
\end{equation*}
with $r_b(x) := xb$ is a bijection.
It restricts to a further bijection
\begin{equation*}
\{b\in B_0^{\times} \mid \phi=i(b)\circ \psi\} \cong \mathrm{Iso}_{B\text{-}A}(B_{\phi},B_{\psi})\text{,}
\end{equation*}
%\begin{equation*}
%\{b\in B_0^{\times} \mid \forall a\in A: \phi(a)=i(b)\circ \psi(a)\} \cong \mathrm{Iso}_{B\text{-}A}(B_{\phi},B_{\psi})\text{,}
%\end{equation*}
where $B_0^{\times}$ denotes the group of even units in $B$, and $i(b)$ denotes the inner automorphism associated to $b$, given by $i(b)(c) = bcb^{-1}$.  
In particular, if $\phi\in \mathrm{Aut}(A)$ is a super algebra automorphism, then there exists an isomorphism $A_{\phi} \cong A$ of $A$-$A$-bimodules if and only if $\phi$ is inner. 
\begin{comment}
Obviously $\rho_b$ is an even left $B$-intertwiner. It is also a right $A$-intertwiner:
\begin{equation*}
r_b(x \ract_{\phi} a)=r_b(x\phi(a))=x\phi(a)b=xb\psi(a)=r_b(x)\ract_{\psi}a\text{.}
\end{equation*}
This shows that the map is well defined. Conversely, if $r : B_{\phi} \to B_{\psi}$ is any $B$-$A$-intertwiner, then $r=r_b$ for $b := \rho(1)$. \end{comment}

\end{enumerate}
\end{lemma}

As a particular case of \cref{lem:framing:a}, we obtain from $\phi,\psi\in \Aut(A)$ an isomorphism $A_\phi \otimes_A A_\psi \cong A_{\phi \circ \psi}$.
Since by \cref{LammaIdentityBimoduleInvertible}, each bimodule $A_\phi$ is invertible, there is a canonical homomorphism
\begin{equation} \label{AutPicMap}
  \Aut(A) \to \mathrm{Pic}(A),
\end{equation}
where the Picard group $\mathrm{Pic}(A)$ is the group of isomorphism classes of invertible $A$-$A$-bimodules, with group multiplication given by the relative tensor product over $A$.

\begin{definition}[Picard-surjectivity] \label{DefinitionPicardSurjective}
A super algebra is called \emph{Picard-surjective}, if every invertible $A$-$A$-bimodule $M$ is isomorphic to one of the form $A_\phi$ for some $\phi \in \Aut(A)$, \ie when the map \eqref{AutPicMap} is surjective.
\end{definition}

For example, the purely even super algebra $\CCl_0=\C$ is not Picard-surjective (consider the $\C$-$\C$-bimodule $\Pi \C$), but $\CCl_1$ is (see below).
An example for an algebra that is not Picard surjective in the ungraded setting is the algebra $A=k\oplus M_2(k)$ \cite[Example 1.7]{Bolla1984}. 
Picard-surjectivity is not invariant under Morita equivalences; in fact, we show in \cref{sec:pssa}, \cref{prop:picsur}, that every Morita equivalence class of super algebras contains a Picard-surjective super algebra.

\cref{lem:framing:a} can be rephrased in the following way.
Denoting by $\sAlg_k$ the \emph{ordinary} category of super algebras and super algebra homomorphisms, we obtain a functor
\begin{equation}
\label{eq:framing2vect}
\sAlg_k \to \sAlgbi_k \text{.}
\end{equation}
Here, by functor we actually mean a 2-functor, where $\sAlg_k$ is considered as a bicategory with only identity morphisms; under this functor, composition is only preserved up to coherent compositors (sometimes such a functor is also called \emph{weak functor} or \emph{pseudo-functor}).
The functor in \cref{eq:framing2vect} is the identity on objects, and sends a super algebra homomorphism $\phi:A \to B$ to the $B$-$A$-bimodule $B_{\phi}$, which is a 1-morphism $A \to B$ in $\sAlgbi_k$. 
Note that the identity morphism $\id_A: A \to A$ is sent  to the identity bimodule $A$. 
The intertwiners $c_{\phi,\psi}$ of \cref{lem:framing:a} provide the compositors for this functor, and the diagram \eqref{CompositorCoherence} is the required coherence condition.

This situation fits into the notion of a \emph{framing}, which we recall below.
Framings of bicategories are useful because they are a tool to reduce  -- when possible -- to a one-categorical context. 

\begin{definition}[Framing]
Let $\mathscr{B}$ be a bicategory. 
A category $\mathscr{C}$ together with a functor $F: \mathscr{C} \to \mathscr{B}$ is called a \emph{framing} of $\mathscr{B}$, if $\mathscr{C}$ has the same class of objects as $\mathscr{B}$, $F$ is the identity on the level of objects, and the image of every morphism $\phi$ of $\mathscr{C}$ under $F$ has a right adjoint in $\mathscr{B}$.
A bicategory together with a framing is called a \emph{framed bicategory}.
\end{definition}

\begin{lemma}
\label{lem:framingAlg}
The functor $\sAlg_k \to \sAlgbi_k$ of \cref{eq:framing2vect} is a  framing, turning $\sAlgbi_k$ into a framed bicategory.
\end{lemma}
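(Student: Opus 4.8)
The plan is to simply unwind the definition of a framing and reduce everything to a single verification. Write $F\colon\sAlg_k\to\sAlgbi_k$ for the functor of \cref{eq:framing2vect}. Two of the three requirements are immediate: $\sAlg_k$ and $\sAlgbi_k$ have the same objects (the finite-dimensional super algebras over $k$), and $F$ is the identity on objects by construction. Hence the entire content of the lemma is the assertion that for every super algebra homomorphism $\phi\colon A\to B$ the $1$-morphism $F(\phi)=B_\phi\colon A\to B$ admits a right adjoint in $\sAlgbi_k$.

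I claim that right adjoint is $\pss{\phi}B\colon B\to A$, namely $B$ regarded as an $A$-$B$-bimodule with the regular right $B$-action and left $A$-action twisted along $\phi$ --- the special case of the twisting construction preceding \cref{lem:framing} applied to the $B$-$B$-bimodule $B$. For the unit $\eta_\phi\colon A\to\pss{\phi}B\otimes_B B_\phi$ I would take the composite of the tautological isomorphism $\pss{\phi}B\otimes_B B_\phi\cong\pss{\phi}B_\phi$ (obtained by cancelling $B\otimes_B B\cong B$; here $\pss{\phi}B_\phi$ is $B$ with both $A$-actions through $\phi$) with $\phi$ itself, which is an $A$-$A$-bimodule map $A\to\pss{\phi}B_\phi$ precisely because $\phi$ is unital and multiplicative. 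For the counit $\varepsilon_\phi\colon B_\phi\otimes_A\pss{\phi}B\to B$ I would take the map induced by multiplication, $b\otimes b'\mapsto bb'$, which is well defined because $b\phi(a)\otimes b'=b\otimes\phi(a)b'$ and is visibly a $B$-$B$-bimodule map.

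It then remains to check the two triangle identities. After inserting the associators and unitors of $\sAlgbi_k$, one verifies that the two composites
\begin{equation*}
B_\phi \longrightarrow B_\phi\otimes_A\pss{\phi}B\otimes_B B_\phi \longrightarrow B_\phi,
\qquad
\pss{\phi}B \longrightarrow \pss{\phi}B\otimes_B B_\phi\otimes_A\pss{\phi}B \longrightarrow \pss{\phi}B
\end{equation*}
are the respective identities; the computation uses nothing beyond $\eta_\phi(1_A)=1_B$ (so that $\eta_\phi$ inserts the factor $1_B\otimes 1_B$) and $\varepsilon_\phi(b\otimes 1_B)=\varepsilon_\phi(1_B\otimes b)=b$, together with the coherence laws. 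Conceptually this adjunction is just the extension-of-scalars $\dashv$ restriction-of-scalars adjunction along $\phi$, repackaged bicategorically. Thus $B_\phi\dashv\pss{\phi}B$, every $F(\phi)$ has a right adjoint, and $F$ is a framing.

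I do not foresee a real obstacle. Whether a $1$-morphism of $\sAlgbi_k$ admits a right adjoint is in general a genuine condition --- the underlying left module has to be finitely generated projective --- so the one point deserving a moment's thought is that this is automatic for $B_\phi$, which it is because the underlying left $B$-module of $B_\phi$ is $B$ itself, hence free. Beyond that, the only thing to get right is the identification of $\pss{\phi}B$ together with its unit and counit; once those are in place the triangle identities are pure bookkeeping with coherence isomorphisms, and the super signs cause no trouble since $\phi$ is even and only units enter the two identities.
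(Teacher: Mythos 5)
Your proof is correct and takes essentially the same approach as the paper: you identify the right adjoint of $B_\phi$ as $\pss{\phi}B$ with counit $b\otimes b'\mapsto bb'$ and unit $a\mapsto\phi(a)\otimes 1$, exactly as in the paper, and the triangle identities reduce to the same elementary computations. The remarks about finite projectivity and the extension/restriction-of-scalars interpretation are accurate but supplementary.
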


\begin{proof}
We have to show that for a  super algebra homomorphism $\phi: B \to A$, the $B$-$A$-bimodule $B_\phi$ admits a right adjoint.
We claim that such a right adjoint is the $A$-$B$-bimodule $_\phi B$.
Indeed, the evaluation $\varepsilon : B_\phi \otimes_A {}_\phi B \Rightarrow B$ is defined by $\varepsilon(b \otimes c) = bc$, while the coevaluation $\eta:  A \Rightarrow {}_\phi B \otimes_B B_\phi$ is given by $\eta(a) \mapsto \phi(a) \otimes 1$.
\begin{comment}
To see that $\eta$ is an intertwiner, calculate
\begin{equation*}
  \eta(a \lact x \ract b) = \phi(a x b) \otimes 1 = \phi(a) \phi(x) \otimes \phi(b) = \phi(a) \lact \phi(x) \otimes 1 \ract \phi(b) = \phi(a) \lact \eta(x) \ract \phi(b)
\end{equation*}
\end{comment}
A straightforward calculation shows that the composition $(\id \otimes \varepsilon)(\eta \otimes \id)$ and $(\varepsilon \otimes \id)(\id \otimes \eta)$ are identities, as required.
\begin{comment}
Pictorially, these are
\begin{equation*}
\xymatrix{
{}_\phi B \cong A \otimes_A {}_\phi B \ar[r]^{\eta \otimes \id'} 
& 
{}_\phi B \otimes_B B_{\phi} \otimes_A {}_\phi B
\ar[r]^{\id \otimes \varepsilon}
&
_\phi B \otimes_B B \cong {}_\phi B 
\\
 B_\phi \cong B_\phi \otimes_A A 
 \ar[r]^{\id \otimes \eta}
 &
 B_\phi \otimes_A {}_\phi B \otimes_B B_\phi 
 \ar[r]^{\varepsilon \otimes \id}
 &
 B \otimes_B B_\phi \cong B_\phi
}
\end{equation*}
On elements, we get
\begin{align*}
  &b \cong 1 \otimes b \mapsto 1 \otimes 1 \otimes b \mapsto 1 \otimes b \cong b\\
  &b \cong b \otimes 1 \mapsto b \otimes 1 \otimes 1 \mapsto b \otimes 1 \cong b.
\end{align*}
\end{comment}
\end{proof}

\begin{remark}
\label{re:doublecategory}
What we called \emph{framing} above is sometimes called a \emph{proarrow equipment}, or similar. The usual definition of a proarrow equipment is slightly more general: 
there, $\mathscr{C}$ is a strict bicategory and the framing $F$ is supposed to be locally fully faithful.
 If $\mathscr{C}$ is a category, as in our case, it extends uniquely to a strict bicategory such that $F$ is locally fully faithful.  
We remark further that framed bicategories $F:\mathscr{C} \to \mathscr{B}$ are equivalent to  \emph{double categories with companions and conjoints}. In our case, the framed bicategory $\sAlgbi_k$ corresponds to a double category whose category of  objects is $\sAlg_k$, and whose category of morphisms  consists of bimodules and intertwiners along algebra homomorphisms, then pictured as 2-cells
\begin{equation} 
\label{IntertwinerAlongHom}
\xymatrix@C=0.5cm@R=1.2cm{
  A \ar[rr]^{M} \ar[d]_{\phi_A} &  & B \ar[d]^{\phi_B}\\
  A^\prime \ar[rr]_{M^\prime} & \, & B^\prime.}
\end{equation}
The framing functor  provides companions to all vertical morphisms, and their adjoints provide conjoints. 
We refer to \cite{shulman1} for a comprehensive discussion of the relation between framed bicategories and double categories. We decided to use framed bicategories instead of double categories because then we are free to add or drop the framing at any time. 
\end{remark}

\subsection{Symmetric monoidal structure}

An important feature of the bicategory $\sAlgbi_k$ is that it is symmetric monoidal, in the sense of Schommer-Pries \cite[Definition 2.3]{pries1}. The monoidal structure is the \emph{graded} tensor product of super algebras and the exterior graded tensor product of bimodules over $k$. 
%
\begin{comment}
Let $M$ be an $A$-$B$-bimodule, and let $M'$ be an $A'$-$B'$-bimodule. Then the exterior tensor product $M \otimes M'$ is $(A\otimes A')$-$(B\otimes B')$-bimodule in the following way:
\begin{align*}
(a \otimes a') \lact (m\otimes m') \ract (b \otimes b') := (-1)^{|a'||m|+|b||m'|+|b||a'|}(a\lact m \ract b)\otimes (a'\lact m'\ract b')\text{.}
\end{align*}
Further, the identity is graded linear map
\begin{equation*}
\Pi(M \otimes M') \cong \Pi M \otimes M' \cong M \otimes \Pi M'\text{.}
\end{equation*}
It turns out that, just like in \cref{lem:Pi}, 
\begin{equation*}
\Pi (M \otimes M') \to M \otimes \Pi M': m \otimes m' \mapsto m \otimes m'
\end{equation*} 
is an intertwiner, and
\begin{equation*}
\Pi(M \otimes M') \to \Pi M \otimes M : m \otimes m' \mapsto (-1)^{|m'|}m \otimes m'
\end{equation*}
is an intertwiner. 
\\
Also, the identity induces an intertwiner
\begin{equation*}
A_{\varphi_A} \otimes B_{\varphi_B} \cong (A\otimes B)_{\varphi_A\otimes \varphi_B}\text{.}
\end{equation*}
\end{comment}
%
The tensor unit is the field $k$, considered as a trivially graded super algebra, $\unit := k$. 

The framing $\sAlg_k \to \sAlgbi_k$ is a framing of symmetric monoidal bicategories, which just means that it is a symmetric monoidal functor. This basically means that super algebra homomorphisms $\phi:A \to B$ and $\phi':A' \to B'$ induce invertible intertwiners
\begin{equation*}
B_{\phi} \otimes_k B'_{\phi'} \cong (B \otimes_k B')_{\phi \otimes \phi'} \end{equation*}
of $(A \otimes A')$-$(B \otimes B')$-bimodules, coherent with compositors and associators.

\begin{remark}
Note that 
\begin{equation*}
\Endcat_{\sAlgbi_k}(\unit) \cong \sVect_k
\end{equation*}
as symmetric monoidal categories, where $s\Vect_k$ is the symmetric monoidal category of super vector spaces. This shows that $\sAlgbi$ is a \quot{delooping} of the symmetric monoidal category of super vector spaces, and hence allows to say that $\sAlgbi$ is a bicategory of \textit{super 2-vector spaces}. 
\end{remark}

\begin{remark}
\label{re:dualizable}
One can show that every super algebra $A$ is dualizable with respect to the symmetric monoidal structure. 
Furthermore, a super algebra is \emph{fully} dualizable if and only if it is semisimple (equivalently, separable).
These results are known for ungraded algebras; see, e.g., \cite{Bartlett2015}; the discussion of super algebras is essentially analogous and will appear elsewhere. 
In each case, the dual super algebra is provided by the opposite super algebra $A^{\opp}$, and the evaluation and co-evaluation bimodules $A \otimes A^{\opp} \to k$ and $k \to A^{\opp} \otimes A$ both have $A$ as their underlying super vector space. 
We refer to \cite{Lorand2019} for more details about this duality transformation on the bicategory $\Algbi_k$.
\end{remark}  

Of particular interest are  the \emph{invertible objects}, \ie super algebras $A$ for which evaluation and coevaluation bimodules are invertible.  
The following is a classical theorem, see, for instance, \cite{Wall1964,DK70,Lam2005}.

\begin{theorem}  \label{TheoremCentralSimpleAlgebras}
The invertible objects in $\sAlgbi_k$ are precisely the central simple super algebras.
\end{theorem}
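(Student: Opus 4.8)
The statement is that the invertible objects in $\sAlgbi_k$ are exactly the central simple super algebras. The plan is to treat the two implications separately, using the characterization of invertibility in terms of the evaluation and coevaluation bimodules described in \cref{re:dualizable}: namely, $A$ is invertible if and only if the bimodules $A \to k$ and $k \to A^{\opp}\otimes A$ (both with underlying super vector space $A$, with the biactions coming from multiplication in $A \otimes A^{\opp}$) are mutually inverse, equivalently there are intertwiner isomorphisms $A \otimes_{A\otimes A^{\opp}} A \cong k$ and $A \otimes_k A \cong A \otimes A^{\opp}$ as $(A\otimes A^{\opp})$-$(A \otimes A^{\opp})$-bimodules. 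Equivalently — and this is the form I would actually use — $A$ is invertible iff the canonical super algebra homomorphism $A \otimes A^{\opp} \to \End_k(A)$ (sending $a \otimes b$ to $x \mapsto (-1)^{\cdots} a x b$) is an isomorphism. So the theorem reduces to: \emph{$A \otimes A^{\opp} \xrightarrow{\sim} \End_k(A)$ if and only if $A$ is central simple}, which is the classical super-Wedderburn/Azumaya statement.

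First I would establish that an invertible $A$ forces $A \otimes A^{\opp} \cong \End_k(A)$: if $A$ is invertible with inverse $A^{\opp}$, then unravelling the triangle identities for the evaluation and coevaluation shows the composite $A \otimes A^{\opp} \to \End_k(A)$ is an isomorphism of super algebras; conversely if this map is an isomorphism, one checks directly that the evaluation and coevaluation bimodules are mutually inverse, so $A$ is invertible. This is a purely formal manipulation with the bimodule structures and the framing data already set up in \cref{sec:AlgebrasAndBimodules}. Next, I would show that $A \otimes A^{\opp} \cong \End_k(A)$ implies $A$ is central simple: simplicity follows because a two-sided ideal $I \trianglelefteq A$ gives the subspace $I \cdot A = \{\,x \mapsto axb : a \in I\,\}$... more cleanly, a nonzero proper graded ideal would produce a nonzero proper $(A\otimes A^{\opp})$-submodule of $A$, contradicting that $A$ is a simple module over $\End_k(A)$; centrality (in the graded sense, $Z(A)_0 = k$ and appropriate behaviour in odd degree) follows since the center of $A\otimes A^{\opp}$ acting on $A$ must be $k \cdot \id$, and the center of $\End_k(A)$ is exactly the scalars. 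For the converse, I would cite/recall the structure theory: a central simple super algebra over $k = \R$ or $\C$ is, by the super-analogue of the Artin–Wedderburn theorem (classification of simple super algebras as matrix algebras over $k$, $\C$, $\H$, or their Clifford-twisted versions $\CCl_1$-type pieces), one for which $A \otimes A^{\opp} \to \End_k(A)$ is an isomorphism — this is exactly the content of the classical references \cite{Wall1964,DK70,Lam2005}, and for the super case one reduces the claim to the two building blocks $M_{p|q}(k)$ and the purely odd Clifford pieces by checking the property is stable under graded tensor product (using that $\End(A)\otimes\End(B)\cong\End(A\otimes B)$ and $(A\otimes B)^{\opp}\cong A^{\opp}\otimes B^{\opp}$ in the graded sense) and verifying it on those generators directly.

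The main obstacle is the super (graded) bookkeeping: the sign conventions in the action of $A \otimes A^{\opp}$ on $A$, the correct notion of "central" for super algebras (whether one uses the graded center $Z^{\mathrm{gr}}(A)$ or the ungraded center), and ensuring the super-Wedderburn classification is invoked in the precise form needed — in particular the subtlety that $\CCl_1$ is simple and central-simple as a super algebra even though its ungraded underlying algebra $\C$ is not central over $\R$. I expect the cleanest route is to quote the classical result \cref{TheoremCentralSimpleAlgebras} essentially verbatim from \cite{Wall1964,DK70,Lam2005} for the hard direction, contributing only the translation between "invertible object in the symmetric monoidal bicategory $\sAlgbi_k$" and "the structure map $A \otimes A^{\opp} \to \End_k(A)$ is an isomorphism," which is where the bicategorical formalism of this paper does its work.
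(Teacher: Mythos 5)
The paper itself gives no proof of \cref{TheoremCentralSimpleAlgebras}: it is explicitly presented as a classical fact with a pointer to \cite{Wall1964,DK70,Lam2005}, so there is no in-paper argument to compare against. Your outline is a sound reconstruction of the standard route. The key reduction you identify — that $A$ is an invertible object of $\sAlgbi_k$ if and only if the canonical map $A \otimes A^{\opp} \to \End_k(A)$ is an isomorphism — is exactly right: the dual of $A$ is $A^{\opp}$ (\cref{re:dualizable}), and the evaluation bimodule (which is $A$ itself, regarded as an $(A\otimes A^{\opp})$-$k$-bimodule) is invertible precisely when $A$ is a finitely generated projective generator over $k$ (automatic in finite dimension for $A\neq 0$) and the structure map $A\otimes A^{\opp}\to\End_k(A)$ is an isomorphism, the latter being the classical Azumaya condition, which over a field is equivalent to central simplicity. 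Your concern about matching the paper's convention of centrality, $Z(A)_0 = k$ in Wall's sense rather than the graded center, is also well placed; the paper's own remark immediately after the theorem addresses this and explains why the two notions coincide for simple super algebras. Two minor inaccuracies worth fixing: your evaluation should be written as a bimodule $A\otimes A^{\opp}\to k$ (not ``$A\to k$''), and the phrasing ``$A\otimes_k A\cong A\otimes A^{\opp}$ as bimodules'' is not quite the correct triangle identity; what is needed is that the two composites $\mathrm{coev}$ followed by $\mathrm{ev}$ (in the two possible bracketings) are isomorphic to the respective identity bimodules. Neither affects the substance; the plan correctly locates the bicategorical work in the translation to the Azumaya isomorphism and correctly defers the hard classification to the cited classical literature, which is exactly how the paper treats it.
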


Here, a super algebra $A$ is called \emph{simple}, if $1\neq 0$, and $\{0\}$ and $A$ are the only graded two-sided  ideals of $A$.
A super algebra is called \emph{central}, if the even part of the center is the ground field $k$, \ie $Z(A)_0=Z(A) \cap A_0= k\cdot 1$ (this definition is due to Wall \cite{Wall1964}). 

\begin{remark}
  Freed  defines in \cite{Freed2012} a super algebra to be central when the graded center 
  \begin{equation*}
  SZ(A) := \{ a \in A \mid \forall b \in B : ab = (-1)^{|a||b|} ba \} = k
  \end{equation*}
   (here $[\cdot, \cdot]$ denotes the supercommutator), which seems to be the more natural definition in the graded context.
   The condition $SZ(A) = k$ implies $Z(A)_0 = k$, as any $a \in Z(A)_0$ is also contained in $SZ(A)$.
   On the other hand, the condition $Z(A)_0 = k$ does not generally imply $SZ(A)=k$, as, e.g., the exterior algebra $\Lambda V$ on a $k$-vector space $V$ is supercommutative, hence satisfies $SZ(\Lambda V) = \Lambda V$, but if $\dim(V) = 1$, then $Z(A)_0 = \Lambda^0 V = k \neq \Lambda V$.
   However, it follows a posteriori from \cref{TheoremCentralSimpleAlgebras} and  case by case inspection that every central \emph{simple} algebra satisfies $SZ(A) = k$.
Hence, ultimately, it makes no difference whether one uses the graded or the ungraded center in order to define central simple algebras.
\end{remark}

Central simple super algebras and super algebra homomorphisms form
a full subcategory  $\cssAlg_k \subset \sAlg_k$.
The graded tensor product of central simple super algebras is again central simple; hence, this subcategory is symmetric monoidal.

Likewise, we denote by $\cssAlgbi_k\subset \sAlgbi_k$ the full sub-bicategory over all central simple algebras, which is also symmetric monoidal. \Cref{TheoremCentralSimpleAlgebras} means that
\begin{equation*}
\cssAlgbi_k = (\sAlgbi_k)^{\times}\text{,}
\end{equation*}
where $(..)^{\times}$ denotes the full sub(-bi)category over all invertible objects of a symmetric monoidal (bi)category. 
 The framing $\sAlg_k \to \sAlgbi_k$ of \cref{lem:framingAlg} restricts to a framing  $\cssAlg_k \to \cssAlgbi_k$. 

The \emph{Brauer-Wall group} of the field $k$ is
\begin{equation*}
  \mathrm{BW}_k := \mathrm{h}_0(\sAlgbi_k)^{\times} = \hc 0 \cssAlgbi_k\text{;}
\end{equation*}
here, $\hc 0$ denotes the set of isomorphism classes of objects. Thus, $\mathrm{BW}_k$ is  the set of Morita equivalence classes of central simple super algebras.
It is well known that $\mathrm{BW}_\R=\Z_8$ and $\mathrm{BW}_{\C}=\Z_2$, and that representatives are the real and complex Clifford algebras, $\Cl_n$ ($n=0,...,7$) and $\CCl_n$ ($n=0,1$), respectively.

\begin{remark}
\begin{enumerate}[(1)]
\item
\label{re:classbimodcsa}
If $A$ is a central simple super algebra, then up to isomorphism of $A$-$A$-bimodules there are precisely two invertible $A$-$A$-bimodules, namely $A$ and $\Pi A$, i.e., $\mathrm{Pic}(A)=\Z_2$. Though this is well known, we have been unable to find a direct reference in the established literature. A full proof can be found in \cite[Lem.~2.2.3]{Mertsch2020}.

\item  
There is in fact another symmetric monoidal structure on the bicategory $\sAlgbi_k$ of super algebras, namely the (graded) direct sum. The two monoidal structures are compatible with each other in the sense of distributive laws, and form a \emph{commutative rig bicategory}.

\item
The invertible objects in the bicategory $\Algbi_k$ of \emph{ungraded} algebras are the central simple \emph{ungraded} algebras. The \emph{Brauer group} of the field $k$ is  $\mathrm{Br}_k := \mathrm{h}_0(\Algbi_k)^{\times}$. It is well known that $\mathrm{Br}_{\R}=\Z_2$ and that $\mathrm{Br}_{\C}$ is the trivial group.  

\end{enumerate}
\end{remark} 

\subsection{The automorphism 2-group of an algebra}

\label{sec:aut}

We recall that a \emph{2-group} is a monoidal groupoid in which all  objects are weakly invertible \cite{baez5}. If $\mathscr{C}$ is a bicategory, then every object $A$ of $\mathscr{C}$ has an automorphism 2-group 
\begin{equation*}
\AUT_{\mathscr{C}}(A) \subset \Endcat_{\mathscr{C}}(A) = \Homcat_{\mathscr{C}}(A,A)\text{,}
\end{equation*}
consisting of all \emph{invertible} 1-morphisms $A \to A$ and all invertible 2-morphisms between those. In this section, we discuss the automorphism 2-group of a super algebra $A$ in the bicategory $\sAlgbi_k$.

A 2-group is called \emph{strict} if the monoidal structure is strict and induces group structures on the sets of objects and morphisms. 
Strict 2-groups can be explored via crossed modules, i.e.,  group homomorphisms $t: H \to G$ together with a left action $(g,h) \mapsto \lw gh$ of $G$ on $H$ by group homomorphisms, such that $t(\prescript{g}{}h)=g t(h)g^{-1}$ and $\prescript{t(x)}{}h=xhx^{-1}$ hold for all $g\in G$ and $h,x\in H$.
The relation between crossed modules and strict 2-groups is established by the following construction. To a crossed module we associate the strict 2-group $\Gamma$ with objects $\Gamma_0=G$ and morphisms $\Gamma_1=H \times G$; source and target maps are given by $(h,g)\mapsto g$ and $(h,g)\mapsto t(h)g$, respectively, and composition is defined by $(h_2,g_2) \circ (h_1,g_1) := (h_2h_1,g_1)$. The strict monoidal structure is defined on objects by $(g_1,g_2)\mapsto g_1g_2$ and on morphisms by $(h_2,g_2) \circ (h_1,g_1) := (h_2\lw{g_2}h_1,g_2g_1)$; it induces the given group structure on $G$ and a semi-direct product structure on $H \times G$.

Crossed modules are in particular convenient to model strict 2-groups in a smooth setting. A \emph{crossed module of Lie groups} is one where $H$ and $G$ are Lie groups,  $t$ is a Lie group homomorphism, and where the action of $G$ on $H$ forms a smooth map $G \times H \to H$ \cite{Mackenzie1989}. Every crossed  module  $\Gamma$ of Lie groups gives rise to an exact four-term-sequence
\begin{equation*}
0 \to \pi_1(\Gamma) \to H \stackrel{t}{\to} G \to \pi_0(\Gamma) \to 1
\end{equation*}
of groups, in which $\pi_1(\Gamma) := \mathrm{ker}(t)$ is an abelian Lie group. If $\pi_0(\Gamma):= G/t(H)$ carries a Lie group structure such that $G \to \pi_0(\Gamma)$ is smooth, then $\Gamma$ is called \emph{smoothly separable}. A smoothly separable crossed module $\Gamma$ sits in an extension
\begin{equation}
\label{eq:cmex}
1\to \mathscr{B}\pi_1(\Gamma) \to \Gamma \to \pi_0(\Gamma)_{dis}\to 1
\end{equation}
of crossed modules of Lie groups. 
Here, $\mathscr{B}\pi_1(\Gamma)$ is the  crossed module $\pi_1(\Gamma) \to \ast$ with the trivial action, and $\pi_0\Gamma_{dis}$ is the crossed module $\pi_0(\Gamma) \stackrel{\id}\to \pi_0(\Gamma)$ with the conjugation action.   
The action of $G$ on $H$ induces an action of $\pi_0(\Gamma)$ on $\pi_1(\Gamma)$, and the extension \cref{eq:cmex} is called \emph{central} when this action is trivial.

\begin{comment}
By the work of Schommer-Pries \cite{pries2}, such extensions are classified by the smooth group cohomology
$\mathrm{H}^2_{sm}(\pi_0(\Gamma),\pi_1(\Gamma))$ in the sense of Segal-Mitchison \cite{segal4} and Brylinski \cite{brylinski3}, with above-mentioned action of $\pi_0(\Gamma)$ on $\pi_1(\Gamma)$ by evaluation. 
\end{comment}

We will show that the automorphism 2-group of an algebra $A$ in the bicategory $\sAlgbi_k$ is canonically isomorphic to a strict 2-group, and that this strict 2-group even comes from a crossed module of Lie groups.

Let $A$ be a super algebra. We start by introducing the relevant  crossed module of Lie groups, denoted $\AUT(A)$. It consists of the Lie group $A^{\times}_0$ of even invertible elements of $A$ and of the Lie group $\Aut(A)$ of even automorphisms of $A$, together with the Lie group homomorphism $i: A_0^{\times} \to \Aut(A)$ that associates to an element $a\in A_0^{\times}$ the conjugation $i(a)$ by $a$, and the action of $\Aut(A)$ on $A_0^{\times}$ by evaluation. 
The two conditions for crossed modules are evidently satisfied, 
\begin{equation*}
i({\varphi(a)})=\varphi \circ i(a) \circ \varphi^{-1}
\quad\text{ and }\quad
i(a)(b)=aba^{-1}\text{.}
\end{equation*}
The four-term-sequence associated to $\AUT(A)$ is
\begin{equation} \label{4TermSequence}
1\to Z(A)_0^{\times} \to A_0^{\times} \stackrel{i}{\to} \Aut(A) \to \mathrm{Out}_0(A) \to 1\text{,}
\end{equation}
where $\mathrm{Out}_0(A)$ is by definition the quotient $\Aut(A)/A_0^{\times}$. In many cases, $\Out_0(A)$ can be equipped with a Lie group structure showing that $\AUT(A)$ is smoothly separable; then, we have an extension of crossed modules of Lie groups,
\begin{equation}
\label{eq:exactsequenceAUTA}
1\to \mathscr{B}Z(A)_0^{\times} \to \AUT(A) \to \mathrm{Out}_0(A)_{dis} \to 1\text{.}
\end{equation} 
\begin{comment}
As crossed modules, the sequence \eqref{4TermSequence} is
\begin{equation*}
\xymatrix{Z(A)_0^{\times} \ar[r] \ar[d] & A_0^{\times} \ar[r] \ar[d] & \ast \ar[d] \\ \ast \ar[r] & \Aut(A) \ar[r] & \mathrm{Out}_0(A)\text{.}}
\end{equation*}
\end{comment}
Since the  action 
of $\mathrm{Out}_0(A)$ on $Z(A)^{\times}_0$, given by $[\phi]\cdot z := \phi(z)$, 
is in general non-trivial,   $\AUT(A)$ is in general a \emph{non-central} extension of crossed modules.

We let $\AUT(A)^{\delta}$ be the \emph{discrete} strict 2-group corresponding to $\AUT(A)$, obtained by discarding the smooth  structures and performing the transition to strict 2-groups as explained above.
We consider the functor
\begin{equation}
\label{eq:autaut}
\AUT(A)^{\delta} \to \Endcat_{\sAlgbi_k}(A)
\end{equation}
that associates to an object $\phi\in \Aut(A)$ the $A$-$A$-bimodule $A_{\phi}$, and to a morphism $(a,\phi)\in A_0^{\times} \times \Aut(A)$ the intertwiner $r_a: A_{\phi} \to A_{i(a) \circ \phi}$ from \cref{lem:framing:d}. 
It is straightforward to show that this defines a functor and that it respects the monoidal structures.

\begin{proposition}
\label{prop:aut}
Let $A$ be a Picard-surjective super algebra. Then, the functor \cref{eq:autaut} establishes an equivalence of strict 2-groups between $\AUT(A)^{\delta}$ and the automorphism 2-group  of $A$ as an object in the bicategory $\sAlgbi_k$,
\begin{equation*}
\AUT(A)^{\delta} \cong \AUT_{\sAlgbi_k}(A)\text{.}
\end{equation*} 
\end{proposition}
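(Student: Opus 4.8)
The plan is to corestrict the monoidal functor of \eqref{eq:autaut} to the automorphism 2-group $\AUT_{\sAlgbi_k}(A) \subseteq \Endcat_{\sAlgbi_k}(A)$ and then to verify that this corestriction is essentially surjective, full, and faithful. Since a (strong) monoidal functor that is an equivalence of underlying categories is automatically a monoidal equivalence, this will exhibit $\AUT_{\sAlgbi_k}(A)$ as equivalent, as a 2-group, to the strict 2-group $\AUT(A)^{\delta}$, which is exactly the asserted equivalence of strict 2-groups.

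First I would check that \eqref{eq:autaut} actually lands in $\AUT_{\sAlgbi_k}(A)$, \ie that it sends objects to invertible 1-morphisms and morphisms to invertible 2-morphisms. The former is \cref{LammaIdentityBimoduleInvertible}: for $\phi \in \Aut(A)$ the bimodule $A_{\phi}$ is invertible because $\phi$ is an isomorphism. The latter is automatic, since every morphism of the 2-group $\AUT(A)^{\delta}$ is invertible and the functor preserves composition, so each intertwiner $r_a$ with $a \in A_0^{\times}$ is invertible. The monoidal data is already available: by \cref{lem:framing:a} the compositors are the isomorphisms $A_{\phi} \otimes_A A_{\psi} \cong A_{\phi \circ \psi}$, and their coherence with the associator of $\sAlgbi_k$ is the commuting square of \cref{lem:framing:b}. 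Thus \eqref{eq:autaut} is a monoidal functor $\AUT(A)^{\delta} \to \AUT_{\sAlgbi_k}(A)$.

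Next I would establish essential surjectivity. Up to isomorphism the objects of $\AUT_{\sAlgbi_k}(A)$ are exactly the invertible $A$-$A$-bimodules, and by the hypothesis that $A$ is Picard-surjective (\cref{DefinitionPicardSurjective}) every such bimodule is isomorphic to $A_{\phi}$ for some $\phi \in \Aut(A)$, \ie to the image of the object $\phi$; this is the one and only point where the Picard-surjectivity assumption is used. For fullness and faithfulness I would fix $\phi, \psi \in \Aut(A)$ and compare morphism sets. On the source side, unwinding the passage from crossed modules to strict 2-groups, a morphism $\phi \to \psi$ in $\AUT(A)^{\delta}$ is an element $a \in A_0^{\times}$ with $i(a) \circ \phi = \psi$ (the morphism $(a,\phi)$ having source $\phi$ and target $i(a) \circ \phi$). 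On the target side, the morphisms $A_{\phi} \to A_{\psi}$ in $\AUT_{\sAlgbi_k}(A)$ are the invertible intertwiners, which by \cref{lem:framing:d} (applied with $B = A$) are in bijection, via $b \mapsto r_b$, with $\{\,b \in A_0^{\times} \mid \phi = i(b) \circ \psi\,\}$. The evident substitution $b \leftrightarrow b^{-1}$ identifies this with the source-side set $\{\,a \in A_0^{\times} \mid i(a) \circ \phi = \psi\,\}$, and one checks this bijection is precisely the action of \eqref{eq:autaut} on morphisms. Hence \eqref{eq:autaut} is fully faithful onto $\AUT_{\sAlgbi_k}(A)$.

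Assembling the three steps, \eqref{eq:autaut} is a monoidal functor that is an equivalence of the underlying groupoids onto $\AUT_{\sAlgbi_k}(A)$, hence a monoidal equivalence, \ie an equivalence of 2-groups; since the source $\AUT(A)^{\delta}$ is strict, this proves the statement. I do not expect a genuine obstacle: all of the substantive content — invertibility of the $A_{\phi}$, the classification of (invertible) intertwiners between twisted bimodules, and the coherence of the compositors — is already provided by \cref{lem:framing}, and the hypothesis on $A$ is needed only for essential surjectivity. The only point needing real care is the bookkeeping in the faithfulness step: matching up sources, targets, and the inversion $a \leftrightarrow a^{-1}$ between the source-side and target-side descriptions of the morphism sets, and noting (via the first bijection of \cref{lem:framing:d} and the fact that $r_b$ is invertible exactly when $b$ is) that no invertible 2-morphism is overlooked.
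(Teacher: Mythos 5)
Your proposal is correct and follows essentially the same route as the paper's proof: invertibility of the $A_{\phi}$ and the classification of 2-morphisms both come from \cref{lem:framing}, Picard-surjectivity gives essential surjectivity, and one concludes via the fact that a monoidal functor which is an underlying equivalence is automatically a monoidal equivalence. Your additional bookkeeping about matching sources, targets, and the inversion $a \leftrightarrow a^{-1}$ is a slightly more explicit account of the full-and-faithfulness step, but it does not change the argument.
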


\begin{proof}
That the image of the functor \cref{eq:autaut} lies in the subcategory $\AUT_{\sAlgbi_k}(A)$ is proved by \cref{LammaIdentityBimoduleInvertible,lem:framing:d}. Now it remains to show that the co-restriction of the functor \cref{eq:autaut}  to $\Aut_{\sAlgbi_k}(A)$ is an equivalence of categories; it is then automatically an equivalence of monoidal categories. Picard-surjectivity of $A$ implies essential surjectivity, and \cref{lem:framing:d} proves that  it is full and faithful.
\end{proof}

Independently of whether or not $\Out_0(A)$ can be equipped with a Lie group structure, a \emph{discrete} (i.e., non-smooth) version of the extension \cref{eq:exactsequenceAUTA} always exists. Thus, \cref{prop:aut} implies the following. 

\begin{corollary}
Let $A$ be a Picard-surjective super algebra. 
The automorphism 2-group of $A$ fits into an extension of  2-groups
\begin{equation*} 
1 \to \mathscr{B}Z(A)_0^{\times} \to \AUT_{\sAlgbi_k}(A) \to \Out_0(A)_{dis} \to 1\text{.}
\end{equation*}
\end{corollary}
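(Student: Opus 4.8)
The plan is to read the corollary off \cref{prop:aut}. By that proposition the functor \cref{eq:autaut} exhibits $\AUT_{\sAlgbi_k}(A)$ as equivalent, as a strict $2$-group, to $\AUT(A)^{\delta}$, the $2$-group attached to the crossed module $\AUT(A)=\bigl(A_0^{\times}\xrightarrow{\,i\,}\Aut(A)\bigr)$. Since an extension (\ie fibre sequence) of $2$-groups is a notion invariant under equivalence of $2$-groups, it is enough to produce the claimed extension with middle term $\AUT(A)^{\delta}$ and then transport it along the equivalence of \cref{prop:aut}.

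To produce that extension, first note that $\Inn_0(A):=i(A_0^{\times})$ is a normal subgroup of $\Aut(A)$: the crossed-module identity $i(\varphi(a))=\varphi\circ i(a)\circ\varphi^{-1}$ says precisely that conjugation by $\varphi\in\Aut(A)$ preserves $\Inn_0(A)$ (in fact $t(H)$ is automatically normal for any crossed module). Hence $\Out_0(A)=\Aut(A)/\Inn_0(A)$ is an honest group --- no Lie group structure is needed --- and $\Out_0(A)_{dis}$ a strict $2$-group. Next I would write down the maps of $2$-groups
\[
\mathscr{B}Z(A)_0^{\times}\longrightarrow \AUT(A)^{\delta}\longrightarrow \Out_0(A)_{dis},
\]
the first coming from the inclusion $Z(A)_0^{\times}\hookrightarrow A_0^{\times}$ in degree $1$ (a map of crossed modules precisely because $Z(A)_0^{\times}=\ker(i)$, so $i$ kills it) together with the trivial map in degree $0$, and the second coming from the quotient $\Aut(A)\twoheadrightarrow\Out_0(A)$ in degree $0$ together with the trivial map in degree $1$. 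That this sequence is a fibre sequence of $2$-groups is exactly the content of the four-term exact sequence \eqref{4TermSequence}: it records that $\pi_1(\AUT(A)^{\delta})=\ker(i)=Z(A)_0^{\times}$ and $\pi_0(\AUT(A)^{\delta})=\operatorname{coker}(i)=\Out_0(A)$, which is all that is needed for the induced sequence of homotopy groups to be exact. In other words, this is just the discrete (non-smooth) analogue of the extension \cref{eq:exactsequenceAUTA}, available for the crossed module $\AUT(A)$ regardless of whether $\Out_0(A)$ carries a Lie group structure.

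Finally I would compose the two displayed maps with the equivalence $\AUT(A)^{\delta}\xrightarrow{\,\sim\,}\AUT_{\sAlgbi_k}(A)$ of \cref{prop:aut}; since an equivalence of $2$-groups induces isomorphisms on $\pi_0$ and $\pi_1$, the resulting sequence $1\to\mathscr{B}Z(A)_0^{\times}\to\AUT_{\sAlgbi_k}(A)\to\Out_0(A)_{dis}\to 1$ is again a fibre sequence, which is the assertion. The only step requiring genuine care is the bookkeeping one at the start: pinning down that \quot{extension of $2$-groups} means the homotopy-fibre notion, and that this notion is stable under equivalence, so that transport along \cref{prop:aut} is legitimate. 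Everything after that is a matter of unwinding the definition of $\AUT(A)$ and quoting \eqref{4TermSequence}; there is no substantive obstacle.
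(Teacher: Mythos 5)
Your argument is correct and coincides with the paper's: the paper obtains this corollary precisely by forming the discrete version of the extension \cref{eq:exactsequenceAUTA} --- which exists for any crossed module independently of whether $\Out_0(A)$ carries a Lie group structure --- and transporting it along the equivalence of \cref{prop:aut}. The details you spell out (the strict maps of crossed modules and the appeal to the four-term sequence \eqref{4TermSequence}) are exactly what the paper leaves implicit.
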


\begin{comment}
We remark that by a result of Baez-Lauda, 2-group extensions as above are classified by the group cohomology $\h^3(\Out_0(A),Z(A)_0^{\times})$ \cite{baez5}. 
For $A$ central simple over $k=\C$, we have $\Out_0(A) = \Z_2$, $\Z(A)_0^\times = \C^\times$. 
From Bockstein, we have $\h^3(\Z_2, \C^\times) \cong \h^4(\Z_2, \Z) = \Z_2 a^2$, where $a$ is the degree 2 generator of $\h^*(\Z_2, \Z) = \Z[a]/2a$.
I think the group extension is the trivial element here. 
Or maybe the element depends on whether $A$ is Morita equivalent to $\C$ or $\CCl_1$ (see my new paper).
If $A$ is central simple over $k=\R$, then $Z(A)_0^\times = \R^\times$, and from Bockstein, we obtain $\h^3(\Z_2, \R^\times) \cong \h^4(\Z_2, \Z_2) = \Z_2 a^2$, essentially the same result.
\end{comment}

The automorphism 2-groups of isomorphic objects of a bicategory are always isomorphic.
By \cref{prop:picsur}, every super algebra $A$ is Morita equivalent to a Picard-surjective super algebra $B$, i.e., $A \cong B$ in the bicategory $\sAlgbi_k$. In particular, $\AUT_{\sAlgbi_k}(A)\cong \AUT_{\sAlgbi_k}(B)$. 
Hence, the result of \cref{prop:aut} extends non-canonically from Picard-surjective super algebras to arbitrary ones, as follows.

\begin{corollary}
Let A be a super algebra, and let $B$ be a Morita equivalent Picard-surjective super algebra. Then, there exists an isomorphism $\AUT_{\sAlgbi_k}(A)\cong\AUT(B)^{\delta}$.
\end{corollary}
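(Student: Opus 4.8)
The plan is to chain together two equivalences of $2$-groups: one coming from the Morita equivalence $A\cong B$ in $\sAlgbi_k$, and one coming from \cref{prop:aut} applied to the Picard-surjective algebra $B$.

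First, since $A$ and $B$ are Morita equivalent there is an invertible $1$-morphism $P\colon A\to B$ in $\sAlgbi_k$ (an invertible $B$-$A$-bimodule), together with an inverse $1$-morphism $Q\colon B\to A$ and invertible $2$-morphisms $Q\otimes_B P\cong A$ and $P\otimes_A Q\cong B$. Conjugation by $P$ defines a functor
\[
C_P\colon \Endcat_{\sAlgbi_k}(A)\longrightarrow \Endcat_{\sAlgbi_k}(B),\qquad M\longmapsto P\otimes_A M\otimes_A Q,
\]
where the associators of $\sAlgbi_k$ together with the chosen $2$-morphisms above provide coherent compositors, turning $C_P$ into a monoidal functor for the composition monoidal structures on these endomorphism categories, with $C_Q$ a monoidal quasi-inverse (one checks $C_Q C_P(M)\cong (Q\otimes_B P)\otimes_A M\otimes_A(Q\otimes_B P)\cong M$, and symmetrically). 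This is precisely the general fact recalled just before the statement, that isomorphic objects of a bicategory have equivalent endomorphism categories. Since $C_P$ preserves invertibility of $1$-morphisms and of $2$-morphisms, it restricts to an equivalence of $2$-groups
\[
C_P\colon \AUT_{\sAlgbi_k}(A)\xrightarrow{\ \ \sim\ \ }\AUT_{\sAlgbi_k}(B).
\]

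Second, \cref{prop:aut} applies to $B$ because $B$ is Picard-surjective, and yields an equivalence of $2$-groups $\AUT(B)^{\delta}\cong\AUT_{\sAlgbi_k}(B)$. Composing this with a quasi-inverse of $C_P$ produces the asserted equivalence $\AUT_{\sAlgbi_k}(A)\cong\AUT(B)^{\delta}$, which — exactly as in the preceding corollary, where the source is the non-strict $2$-group $\AUT_{\sAlgbi_k}(A)$ and the target is the strict $2$-group $\AUT(B)^{\delta}$ — is what is meant here by an isomorphism of $2$-groups.

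There is no genuine obstacle in this argument: essentially all of the content is contained in \cref{prop:aut}, and the remaining ingredient is the elementary bicategorical observation that conjugation by an invertible $1$-morphism is a monoidal equivalence of endomorphism categories restricting to an equivalence of automorphism $2$-groups. The only minor bookkeeping, which I would not spell out in detail, is to verify that $C_P$ is well defined on $2$-morphisms and strictly compatible (up to the coherence data) with horizontal composition; this is immediate from the functoriality of the relative tensor product in each variable and the coherence axioms of $\sAlgbi_k$.
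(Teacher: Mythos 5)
Your proof is correct and follows essentially the same route as the paper: the paper simply invokes the general bicategorical fact that isomorphic objects have isomorphic automorphism $2$-groups (which you spell out as conjugation by an invertible bimodule $P$), and then applies \cref{prop:aut} to the Picard-surjective algebra $B$.
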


 Moreover,  we see that $\AUT(A)^{\delta} \cong \AUT(B)^{\delta}$  whenever $A$ and $B$ are Picard-surjective and Morita equivalent super algebras. 
Our next result shows that this also holds  in the smooth setting.

\begin{proposition}
\label{prop:moritainvariance2group}
Suppose $A$ and $B$ are Picard-surjective super algebras. Then, every invertible $A$-$B$-bimodule induces a weak equivalence between the crossed modules of Lie groups $\AUT(A)$ and $\AUT(B)$.
\end{proposition}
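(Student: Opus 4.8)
The plan is to manufacture from $M$ an intermediate crossed module $\Gamma_M$ of Lie groups sitting in a span of smooth crossed‑module morphisms $\AUT(A)\longleftarrow\Gamma_M\longrightarrow\AUT(B)$, both of which are weak equivalences; equivalently, a \quot{butterfly} group $E$ fitting into two short exact sequences, one over $\Aut(A)$ and one over $\Aut(B)$. One should not expect a strict comparison morphism $\AUT(A)\to\AUT(B)$: conjugating the invertible bimodule $A_\varphi$ by $M$ determines the resulting automorphism of $B$ only up to an inner one, i.e. the map \eqref{AutPicMap} need not split, and $E$ is exactly the device that repairs this ambiguity.

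Fix a bimodule $N$ inverse to $M$, so $M\otimes_B N\cong A$ and $N\otimes_A M\cong B$, and write $\Phi(X):=N\otimes_A X\otimes_A M$ for conjugation by $M$, a monoidal functor from $A$-$A$-bimodules to $B$-$B$-bimodules. Let $E$ consist of the triples $(\varphi,\psi,g)$ with $\varphi\in\Aut(A)$, $\psi\in\Aut(B)$ and $g\colon\Phi(A_\varphi)\to B_\psi$ an even intertwiner of $B$-$B$-bimodules; the product $(\varphi_1,\psi_1,g_1)\cdot(\varphi_2,\psi_2,g_2):=(\varphi_1\varphi_2,\psi_1\psi_2,g)$ is built from the compositors $c$ of \cref{lem:framing:a} (for $A_{\varphi_1}\otimes_A A_{\varphi_2}\cong A_{\varphi_1\circ\varphi_2}$ and $B_{\psi_1}\otimes_B B_{\psi_2}\cong B_{\psi_1\circ\psi_2}$) and the monoidal constraint of $\Phi$, associativity being guaranteed by the coherence \eqref{CompositorCoherence}. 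Projections give homomorphisms $\sigma\colon E\to\Aut(A)$, $(\varphi,\psi,g)\mapsto\varphi$, and $\rho\colon E\to\Aut(B)$, $(\varphi,\psi,g)\mapsto\psi$; using \cref{lem:framing:d} to fix the isomorphisms $A_{i(a)}\cong A$ via $x\mapsto xa$ and $B\cong B_{i(b)}$ via $x\mapsto xb$, define $\kappa\colon A_0^\times\to E$, $a\mapsto(i(a),\id_B,g_a)$, and $\lambda\colon B_0^\times\to E$, $b\mapsto(\id_A,i(b),h_b)$, with $g_a,h_b$ the induced isomorphisms. Then $\sigma\circ\kappa$ and $\rho\circ\lambda$ are the structure maps $i$ of $\AUT(A)$ and $\AUT(B)$, while $\rho\circ\kappa$ and $\sigma\circ\lambda$ are trivial and $\kappa(A_0^\times),\lambda(B_0^\times)$ are normal — all routine from \cref{lem:framing:d} and \cref{LammaIdentityBimoduleInvertible}.

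The substantive point is exactness of the two diagonals. For $1\to B_0^\times\xrightarrow{\lambda}E\xrightarrow{\sigma}\Aut(A)\to1$: injectivity of $\lambda$ and the identity $\lambda(B_0^\times)=\ker\sigma$ follow from faithfulness of $M$ as a bimodule and \cref{lem:framing:d} applied to the bimodules $B_\psi$; surjectivity of $\sigma$ says every invertible $B$-$B$-bimodule $\Phi(A_\varphi)$ is isomorphic to some $B_\psi$, which is precisely Picard‑surjectivity of $B$ (\cref{DefinitionPicardSurjective}). By the symmetric argument $1\to A_0^\times\xrightarrow{\kappa}E\xrightarrow{\rho}\Aut(B)\to1$ is exact using Picard‑surjectivity of $A$. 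Thus $E$ realizes a weak equivalence: tracing the sequences, $\ker(i\colon A_0^\times\to\Aut(A))=Z(A)_0^\times$ is carried isomorphically onto $\ker(i\colon B_0^\times\to\Aut(B))=Z(B)_0^\times$ by the canonical Morita isomorphism $Z(A)\cong\End_{A\text{-}B}(M)\cong Z(B)$, and $\Out_0(A)$ onto $\Out_0(B)$ by conjugation with $M$ — the homotopy groups being read off from \eqref{4TermSequence}. (Discretely, this reproduces the isomorphism $\AUT(A)^\delta\cong\AUT(B)^\delta$ already obtained from \cref{prop:aut}.) Equivalently, set $\Gamma_M:=\bigl(A_0^\times\times B_0^\times\xrightarrow{(a,b)\mapsto\kappa(a)\lambda(b)^{-1}}E\bigr)$ with $E$ acting through $\sigma,\rho$; then $(\mathrm{pr}_1,\sigma)\colon\Gamma_M\to\AUT(A)$ and $(\mathrm{pr}_2,\rho)\colon\Gamma_M\to\AUT(B)$ are weak equivalences.

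It remains to see that everything is smooth. The bimodule structures on $\Phi(A_\varphi)$ and on $B_\psi$ depend polynomially on $\varphi$ resp. $\psi$, and their underlying super vector spaces have constant dimension (being invertible $B$-$B$-bimodules, they are $\cong B$), so they assemble into smooth vector bundles over $\Aut(A)$ resp. $\Aut(B)$; hence the intertwiner condition exhibits $E$ as a closed, smooth subgroup of a Lie group of linear maps, $\sigma,\rho$ are smooth surjective homomorphisms, hence submersions, and $\kappa,\lambda$ are smooth because $i$ and the assignments $a\mapsto(x\mapsto xa)$, $b\mapsto(x\mapsto xb)$ are smooth. This makes $\AUT(A)\leftarrow\Gamma_M\to\AUT(B)$ a span of crossed modules of Lie groups and weak equivalences, as claimed. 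The main obstacle, I expect, is the conceptual one of identifying $E$ as the right intermediate object and seeing that surjectivity of its two legs is exactly the content of the two Picard‑surjectivity hypotheses (so that a strict morphism genuinely fails but the butterfly succeeds); once $E$ is in hand, the butterfly identities and the $\pi_0$/$\pi_1$ computations are routine uses of \cref{lem:framing:d}, and the only real bookkeeping left is equipping $E$ with its smooth structure and checking that $\sigma,\rho$ are submersions.
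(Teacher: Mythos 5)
Your proof is correct and follows the same overall strategy as the paper's: model the weak equivalence by an invertible butterfly whose two wings are $\AUT(A)$ and $\AUT(B)$. The only genuine difference is the choice of body of the butterfly. The paper reuses the Lie group $I(M)$ of implementers from \cref{DefinitionImplementerSet} --- triples $(\phi,u,\psi)$ with $u\in\GL(M)$ satisfying the implementing relations --- with legs $i_\ell(a)=(i(a),l_a,\id_B)$ and $i_r(b)=(\id_A,r_{b^{-1}},i(b))$; since $I(M)$ was already shown to be a closed subgroup of $\Aut(A)\times\GL(M)\times\Aut(B)$, the smooth structure and the submersion property of $p_\ell$, $p_r$ come for free. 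Your $E$ is the ``Morita--conjugated'' variant: triples $(\varphi,\psi,g)$ with $g\colon N\otimes_A A_\varphi\otimes_A M\to B_\psi$ an isomorphism of $B$-$B$-bimodules. Tensoring on the left with $M$ (and using $M\otimes_B N\cong A$, $A_\varphi\otimes_A M\cong\pss{\varphi^{-1}}M$, $M\otimes_B B_\psi\cong M_\psi$) converts such a $g$ into an implementer $u\colon M\to\pss{\varphi}M_\psi$ and back, so $E\cong I(M)$ as abstract groups; your exactness and butterfly arguments then align with the paper's almost line by line, both using Picard--surjectivity for surjectivity of the legs and \cref{lem:framing:d} for exactness in the middle (your surjectivity argument is precisely \cref{RemarkBetterThanImplementingPicardSurjective}). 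The one place where your route costs extra is the Lie structure on $E$: you assert that the bimodules $\Phi(A_\varphi)$ assemble into a smooth vector bundle over $\Aut(A)$ and that the intertwiner condition cuts out a closed subgroup, which is plausible but sketchy --- one should fix once and for all a linear identification of the underlying super vector spaces with $B$ and then observe that the intertwining conditions are polynomial (hence closed) in $(\varphi,\psi,g)$, exactly as is done for $I(M)$ in \cref{sec:implementingbimodules}. Transporting the smooth structure through the isomorphism $E\cong I(M)$ above would sidestep this entirely and reduce your proof to the paper's. In short: same approach, a more categorical model of the butterfly's core that buys some conceptual transparency at the cost of re-deriving the Lie structure that $I(M)$ already carries.
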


\begin{proof}
Weak equivalences between crossed modules of Lie groups can be modelled by invertible butterflies \cite{Aldrovandi2009}.
Let $M$ be an invertible $A$-$B$-bimodule, and let $I(M)$ be the Lie group of implementers that will be defined in \cref{DefinitionImplementerSet} below. 
We claim that the diagram 
\begin{equation} \label{ButterflyMoritaEquivalence}
\xymatrix{A_0^{\times} \ar[dd]_{i} \ar[dr]^{i_\ell} && B_0^{\times} \ar[dd]^{i}\ar[dl]_{i_r} \\ & I(M) \ar[dl]^{p_\ell}\ar[dr]_{p_r} \\ \mathrm{Aut}(A) && \Aut(B)\text{,}}
\end{equation}
of Lie groups and Lie group homomorphisms is an invertible butterfly between the crossed modules $\AUT(A)$ and $\AUT(B)$. 
In \eqref{ButterflyMoritaEquivalence}, $p_\ell$ and $p_r$ are the projection maps as in \eqref{SourceTargetMaps}.
The map $i_\ell : A_0^{\times} \to I(M)$ is defined by sending $a \in A_0^{\times}$ to $(i(a),l_a, \id_B)$, where $i(a)$ is conjugation by $a$, and $l_a : M \to M$, $m \mapsto a \lact m$ is the left  action by $a$. 
Similarly, $i_r : B_0^\times \to I(M)$ is defined by sending $b\in B_0^{\times}$ to $(\id_A,r_{b^{-1}}, i(b))$, where $r_{b}: M \to M$, $m \mapsto m \ract b$, is the right action. 
%
\begin{comment}
This is well defined, as 
\begin{align*}
&l_{a_1}(a_2 \lact m \ract b) = a_1 a_2 \lact m \ract b = i(a_1)(a_2) \lact l_{a_1}(m) \ract \id_B(b) \\
&r_{b_2^{-1}}(a \lact m \ract b_1) = a \lact \ract b_1 b_2^{-1} = \id_A(a) \lact r_{b_2^{-1}}(m) \ract i(b_2)(b_1).
\end{align*}
\end{comment}
%
One easily checks that $i_\ell$ and $i_r$ are group homomorphisms.
\\
In order to prove that \eqref{ButterflyMoritaEquivalence} is a butterfly, we note that the triangular diagrams in \eqref{ButterflyMoritaEquivalence} commute and that the diagonal sequences complexes. 
We also have to check the following two identities, which is straightforward:
\begin{equation*}
 (i(\phi(a)), l_{\phi(a)}, \id_B)  = (\phi, u, \psi)(i(a), l_a, \id_B)(\phi, u, \psi)^{-1}
\end{equation*}
and
\begin{equation*}
  (\id_A, r_{\psi(b)^{-1}}, i(\psi(b))) = (\phi, u \psi) (\id_A, l_{b^{-1}}, i(b)) (\phi, u, \psi)^{-1} \text{.}
\end{equation*}
\begin{comment}
The only non-trivial part is to check that
\begin{equation*}
l_{\varphi(a)} = u \circ l_a \circ u^{-1}\text{.}
\end{equation*}
Indeed,
\begin{equation*}
l_{\phi(a)}(m)=\phi(a) \lact m =\phi(a)\lact u(u^{-1}(m))=u(a \lact u^{-1}(m))\text{.}
\end{equation*}
\end{comment}
Then it remains to be shown that the diagonal sequences are short exact. 
For injectivity of $i_\ell$ and $i_r$, we note that, as a Morita equivalence, the bimodule $M$ is faithfully balanced, \ie the maps $A \to \mathrm{End}_{-,B}(M)$, $a \mapsto l_a$ and $B^{\opp} \to \mathrm{End}_{A,-}(M)$, $b \mapsto r_b$, are algebra isomorphisms. 
Thus, we have $l_a=\id_A$ if and only if $a=1$, and similarly $r_b=\id_B$ if and only if $b=1$; this shows injectivity. 
Surjectivity follows from the Picard-surjectivity and \cref{RemarkBetterThanImplementingPicardSurjective}.
It is left to show exactness in the middle. 
Suppose $(\phi, u, \psi)\in I(M)$ and $\phi=\id_A$. We then obtain an even invertible intertwiner
\begin{equation*}
\xymatrix{B \cong M^{-1} \otimes_A M \ar[r]^-{1\otimes_A u}&  M^{-1}\otimes_A M_{\psi}\cong (M^{-1}\otimes M)_{\psi}\cong B_{\psi}}
\end{equation*}
of $B$-$B$-bimodules. By \cref{lem:framing:d}, this shows that $\psi$ is inner, \ie $\psi=i(b)$ for some $b\in B_0^{\times}$, and that above intertwiner is given by $x \mapsto xb^{-1}$. This shows that $u=r_{b^{-1}}$ and thus that $i_r(b)=(\id_A,u,\psi)$. The exactness of the other sequence is proved in the same way. 
\end{proof}

Finally, we want to compute the automorphism 2-group of a (Picard-surjective) central simple super algebra.
A central simple super algebra has by definition $Z(A)^{\times}_0=k^{\times}$, and $\mathrm{Pic}(A)=\Z_2$, with representatives given by $A$ and $\Pi A$, see \cref{re:classbimodcsa}. 
We consider the  crossed module of Lie groups $k^{\times} \to \Z_2$, which is given by the zero map and the trivial action of $\Z_2$ on $k^{\times}$. 
It can be written as a direct product crossed module $\mathscr{B}k^{\times} \times (\Z_2)_{dis}$.

\begin{proposition}
\label{prop:classcssa}
If $A$ is a Picard-surjective central simple super algebra over $k$, then there exists a canonical weak equivalence $\AUT(A) \cong \mathscr{B}k^{\times} \times (\Z_2)_{dis}$. 
\end{proposition}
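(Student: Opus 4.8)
The plan is to determine the crossed module of Lie groups $\AUT(A)=(i\colon A_0^\times\to\Aut(A))$ up to weak equivalence by reading off its homotopy groups and the action between them, and then exhibiting a splitting; the splitting is where the real work lies.

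First I would compute the homotopy groups. Since $A$ is central simple, $\pi_1(\AUT(A))=\ker(i)=Z(A)_0^\times=k^\times\cdot 1$. Since $A$ is Picard-surjective, the homomorphism $\Aut(A)\to\mathrm{Pic}(A)$ of \cref{AutPicMap} is surjective and, by \cref{LammaIdentityBimoduleInvertible} together with \cref{lem:framing:d}, has the inner automorphisms as kernel; since $\mathrm{Pic}(A)=\Z_2$ by \cref{re:classbimodcsa}, this gives a canonical identification $\mathrm{Out}_0(A)\cong\Z_2$ (the nontrivial class corresponding to $[\Pi A]$), so $\pi_0(\AUT(A))\cong\Z_2$. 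Moreover the evaluation action of $\Aut(A)$ on $Z(A)_0^\times=k^\times\cdot 1$ is trivial, because every $\varphi\in\Aut(A)$ is a unital $k$-algebra homomorphism and hence fixes $k\cdot 1$ pointwise. Thus, exactly as in \cref{eq:exactsequenceAUTA}, $\AUT(A)$ is a \emph{central} extension of crossed modules of Lie groups
\begin{equation*}
1\longrightarrow\mathscr{B}k^\times\longrightarrow\AUT(A)\longrightarrow(\Z_2)_{dis}\longrightarrow 1 ,
\end{equation*}
and it suffices to show that this extension splits: a morphism of crossed modules of Lie groups $(\Z_2)_{dis}\to\AUT(A)$ over the identity of $(\Z_2)_{dis}$, combined with the central inclusion $\mathscr{B}k^\times\hookrightarrow\AUT(A)$, produces a morphism $\mathscr{B}k^\times\times(\Z_2)_{dis}\to\AUT(A)$ which is an isomorphism on $\pi_1$ by construction and sends the generator of $\Z_2$ to the nontrivial class of $\mathrm{Out}_0(A)$ by the above, hence is a weak equivalence.

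Such a splitting is the same as a group homomorphism $\Z_2\to\Aut(A)$ whose value on the generator is an even automorphism $\sigma$ with $\sigma^2=\id_A$ and $A_\sigma\cong\Pi A$, \ie a non-inner involution of $A$. At the level of \emph{discrete} $2$-groups one obtains this datum canonically: the invertible $A$-$A$-bimodule $\Pi A$, equipped with the canonical even intertwiner $\Pi A\otimes_A\Pi A\cong A$ furnished by \cref{lem:Pi} (and the unitors of $\sAlgbi_k$), is a monoidal functor $(\Z_2)_{dis}\to\AUT_{\sAlgbi_k}(A)$, which together with $\mathscr{B}k^\times=\mathrm{Aut}_{A\text{-}A}(A)$ (see \cref{lem:framing:d}) assembles into an equivalence $\mathscr{B}k^\times\times(\Z_2)_{dis}\simeq\AUT_{\sAlgbi_k}(A)\cong\AUT(A)^{\delta}$, the last step being \cref{prop:aut}. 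To upgrade this to crossed modules of Lie groups I would construct $\sigma$ explicitly, and, using \cref{prop:moritainvariance2group}, it is enough to do so for one Picard-surjective representative in each Morita class. By the classification of central simple super algebras (Wall; Donovan--Karoubi \cite{DK70}) these representatives can be taken from an explicit list: over $\C$ they are the matrix superalgebras $M_{n|n}(\C)=\mathrm{End}_\C(\C^{n|n})$ and $M_r(\CCl_1)$, where $M_{p|q}(\C)$ with $p\neq q$ is central simple but has only inner even automorphisms and so is not Picard-surjective; over $\R$ one argues similarly using the correspondingly longer list built from the real Clifford algebras $\Cl_0,\dots,\Cl_7$. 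For $M_r(\CCl_1)$ one takes $\sigma$ to be the parity involution $a\mapsto(-1)^{|a|}a$, which is non-inner there; for $M_{n|n}(\C)$, where the parity involution happens to be inner, one takes $\sigma$ to be conjugation by an odd invertible element interchanging the two summands $\C^n$; in each case $\sigma^2=\id$ and $A_\sigma\cong\Pi A$. Any two admissible choices of $\sigma$ differ by an inner automorphism $i(v)$, and $v$ furnishes a $2$-cell relating the two resulting morphisms, which is the sense in which the weak equivalence is canonical.

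The crux is the last step, the explicit production of the non-inner involution $\sigma$: the abstract obstruction to splitting the central extension is its Postnikov class in $H^3(\Z_2,k^\times)\cong\Z_2$, and I see no uniform reason for it to vanish --- one really seems to need the classification, and the trivial Brauer--Wall class (over $\C$ the class of $\C$, whose smallest Picard-surjective representative is $M_{1|1}(\C)$, not $\C$) must be handled on the same footing as the Clifford classes. A secondary point is that one must check the smooth separability underlying \cref{eq:exactsequenceAUTA}, \ie that $\mathrm{Out}_0(A)\cong\Z_2$ as Lie groups and not merely abstractly, which holds for Picard-surjective central simple $A$ because $\mathrm{Inn}(A)$ is then open in $\Aut(A)$.
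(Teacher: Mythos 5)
Your proposal takes a genuinely different route from the paper, and it has a real gap that you yourself flag. The paper does not attempt a splitting at all: it exhibits an invertible butterfly directly, with middle term $K \subset \Z_2\times\underline{\mathrm{GL}}(A)\times\Aut(A)$ the set of triples $(\epsilon,u,\phi)$ in which $u$ is an even invertible intertwiner from $A$ (if $\epsilon=0$) or $\Pi A$ (if $\epsilon=1$) to $A_\phi$, together with the evident wing maps. Exactness of the two diagonal sequences is then checked directly: exactness in the middle follows from $\mathrm{Pic}(A)\cong\Z_2$, and surjectivity of both projections follows from Picard-surjectivity. This is uniform over all Picard-surjective central simple $A$ (no case analysis), produces the butterfly canonically from $A$, and in effect packages the splitting data without ever demanding an actual section of $\Aut(A)\to\mathrm{Out}_0(A)$.

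Your route -- showing the central extension $1\to\mathscr{B}k^\times\to\AUT(A)\to(\Z_2)_{dis}\to 1$ is trivial by producing a non-inner involution $\sigma$ -- is harder on two counts and currently incomplete. First, even your discrete-level aside has a hole: the assertion that $\Pi A$ together with ``the canonical even intertwiner $\Pi A\otimes_A\Pi A\cong A$ furnished by \cref{lem:Pi}'' yields a monoidal functor $(\Z_2)_{dis}\to\AUT_{\sAlgbi_k}(A)$ requires a coherence check (the one condition corresponding to $1+1+1$ in $\Z_2$) that you do not carry out, and that check \emph{is} exactly the vanishing of the Postnikov class you say you ``see no uniform reason'' to expect; note also that \cref{lem:Pi} (a) and (b) give two \emph{different} intertwiners $\Pi A\otimes_A\Pi A\to A$, differing by a sign, so there is a real choice to make. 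You have restated the obstruction, not removed it. Second, the fallback you propose -- verifying the existence of $\sigma$ for one Picard-surjective representative per Brauer--Wall class and transporting by \cref{prop:moritainvariance2group} -- could plausibly be completed, but you have not carried it out, and it produces a weak equivalence depending on the choice of Morita bimodule to the representative, so the claimed canonicity would still need a separate argument. In short: the butterfly construction in the paper circumvents precisely the step at which your proposal stalls, by never requiring a section.
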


\begin{proof}
        In order to prove the weak equivalence, we provide a butterfly
\begin{equation} 
\label{Butterflyclasscssa}
\begin{aligned}
        \xymatrix{k^{\times} \ar[dd]_{0} \ar[dr]^{i_1} && A_0^{\times} \ar[dl]_{i_2}  \ar[dd]^{c} \\ & K \ar[dr]_{p_2} \ar[dl]^{p_1} & \\ \Z_2  && \Aut(A) \ .}
\end{aligned}
\end{equation}
We define $K$ as the subset of $\Z_2 \times \underline{\mathrm{GL}}(A) \times \Aut(A)$ consisting of elements $(\epsilon, u ,\phi)$ where $\phi\in \Aut(A)$, $\epsilon\in \Z_2$, $u: \epsilon A \to A_{\phi}$ is an even invertible intertwiner of $A$-$A$-bimodules. 
Here we set
\begin{equation*}
\epsilon A = \begin{cases}A & \epsilon=0 \\
\Pi A & \epsilon=1\text{.}\ \\
\end{cases}
\end{equation*}
It is straightforward to see (cf.~\cref{lem:framing:d}) that for $\phi$ fixed and $\epsilon=0$, the possible intertwiners $u$ such that $(\epsilon, u, \phi) \in K$ correspond precisely to elements $a\in A_0^{\times}$ such that $u(x)=xa^{-1}$ and $\phi=i(a)$. Similarly, for $\epsilon=1$, possible intertwiners correspond precisely to elements $a\in A_1^{\times}$ such that $u(x)=xa^{-1}$ and $\phi=\eta \circ i(a)$, where $\eta$ is the grading operator of $A$.
%
\begin{comment}
For $\epsilon=0$, this means by \cref{lem:framing:d}, the possible $u$ correspond bijectively to $a\in A_0^{\times}$ with $u(x)=xa^{-1}$ and $\phi=i(a)$. 
\\
For $\epsilon=1$, this means that $u:A \to A$ is a parity-reversing vector space isomorphism satisfying 
\begin{equation*}
u(a_1  x  a_2)=(-1)^{|a_2|}a_1  u(x)\phi(a_2)\text{.}
\end{equation*}   
Let $a := u(1)^{-1} \in A_1^{\times}$. Then we have by putting $a_2=x=1$ the rule $u(a_1 )=a_1 a^{-1}$; this determines $u$ completely. Further, by putting $a_1=x=1$, we obtain $(-1)^{|a_2|}a_2a^{-1} = a^{-1}\phi(a_2)$, \ie $\phi=\eta \circ i({a})$, where $\eta$ is the parity operator on $A$.   
\end{comment}
%
\,\\
The subset $K$ is in fact a subgroup: if $(\epsilon_1, u ,\phi_1)$ and $(\epsilon_2,u_2, \phi_2)$ are in $K$, then $(\epsilon_1 + \epsilon_2,u_1 \circ u_2, \phi_1\circ\phi_2)$ is also in $K$ since $u_1 \circ u_2 \in \underline{\mathrm{GL}}(A)$ indeed is an intertwiner $(\epsilon_1 + \epsilon_2) A \to A_{\phi_1\circ \phi_2}$. 
To see this, we note that if the $u_i$ correspond to $a_i\in A^{\times}_{\epsilon_i}$, then $u_1 \circ u_2$ corresponds to $a_1a_2\in A^{\times}_{\epsilon_1 + \epsilon_2}$. 
Since the parity operator commutes with the even automorphism $i({a_1})$, we have $\phi_1 \circ \phi_2 =  \eta \circ i({a_1})\circ i({a_2})$ if $\epsilon_1+\epsilon_2=1$ and $\phi_1\circ \phi_2 = i({a_1})\circ i({a_2})$ otherwise. 
An argument similar to the one in \cref{sec:implementingbimodules} shows that
 $K$ is even a Lie subgroup of the Lie group $\Z_2 \times \underline{\mathrm{GL}}(A) \times \Aut(A)$.
\\
The group homomorphism $i_1: k^{\times} \to K$ and $i_2: A_0^{\times} \to K$ are defined by
\begin{equation*}
  i_1(\lambda) = (0,\lambda,\id), \qquad i_2(a) = (0,r_{a^{-1}},i(a)) \text{,}
\end{equation*}
where $r_a: A \to A$ is right multiplication by $a$. The group homomorphisms $p_1: K \to \Z_2$ and $p_2: K \to \mathrm{Aut}(A)$ are the projections.
It is then obvious that the triangles in \eqref{Butterflyclasscssa} are commutative and that  the diagonal maps are complexes. The two conditions for butterflies, which read here as
\begin{equation*}
(0,\lambda, \id) = (\epsilon,u,\phi)(0,\lambda, \id)(\epsilon, u, \phi)^{-1}
\end{equation*}
for all $\lambda\in k^{\times}$ and
\begin{equation*}
(0,r_{\phi(a)^{-1}}, i({\phi(a)}))=(\epsilon,u, \phi)(0,r_{a^{-1}},i(a))(\epsilon,u,\phi)^{-1}
\end{equation*}
for all $a\in A_0^{\times}$, are straightforward to check.
\\
It remains to prove the exactness of the two sequences. 
It is clear that $i_1$ and $i_2$ are injective. 
Now consider $(0,u,\phi) \in \ker(p_1)$. 
As explained above, $u$ corresponds to an element $a\in A_0^{\times}$ with $u=r_{a^{-1}}$ and $\phi=i(a)$, hence $(0, u, \phi) = i_2(a) \in \mathrm{im}(i_2)$. 
Next consider $(\epsilon, u, \id) \in \ker(p_2)$. 
Then $u : \epsilon A \to A$ is an invertible even intertwiner, \ie isomorphism of $A$-$A$-bimodules.
However, because $\mathrm{Pic}(A) \cong \Z_2$, generated by $[\Pi A]$, see \cref{re:classbimodcsa}, $\Pi A$ and $A$ are not isomorphic as $A$-$A$-bimodules, hence it follows that $\epsilon=0$. 
This shows exactness of both sequences in the middle. 
Since $A$ is Picard surjective, there exists an even invertible intertwiner $u: \Pi A \to A_{\phi}$, for some $\phi\in \Aut(A)$. Thus, $(1, u, \phi)\in K$ maps to $1\in \Z_2$; this shows the surjectivity of $p_1$. Given $\phi\in \mathrm{Aut}(A)$, we have that $A_{\phi} \in \mathrm{Pic}(A)$ and hence either $A_{\phi} \cong A$ or $A_{\phi} \cong  \Pi A$. This shows the surjectivity of $p_2$.
 \end{proof}

Since every central simple super algebra is Morita equivalent to a Picard-surjective  one (\cref{prop:picsur}), which is still central and simple, we obtain the following. 

\begin{corollary}
If $A$ is a central simple super algebra, then there exists a weak equivalence of 2-groups
\begin{equation*}
\AUT_{\sAlgbi_k}(A) \cong \mathscr{B}k^{\times} \times (\Z_2)_{dis}\text{.}
\end{equation*}
\end{corollary}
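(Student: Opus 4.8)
The plan is to reduce immediately to the Picard-surjective case, where \cref{prop:classcssa} already does the work, and then transport the result along a Morita equivalence. First I would invoke \cref{prop:picsur}: the super algebra $A$ is Morita equivalent to some Picard-surjective super algebra $B$. I would then note that $B$ is again central simple. Indeed, by \cref{TheoremCentralSimpleAlgebras} central simplicity of a super algebra is equivalent to its being an invertible object of the symmetric monoidal bicategory $\sAlgbi_k$; invertibility of an object is preserved under isomorphism in any (symmetric monoidal) bicategory, and Morita equivalence is exactly isomorphism in $\sAlgbi_k$. (Alternatively one may simply cite that the explicit construction underlying \cref{prop:picsur} produces a central simple representative, as already remarked above.)

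Next I would transport the automorphism 2-group along this equivalence. Since $A$ and $B$ are isomorphic objects of $\sAlgbi_k$, their automorphism 2-groups are equivalent, $\AUT_{\sAlgbi_k}(A)\cong\AUT_{\sAlgbi_k}(B)$; and since $B$ is Picard-surjective, \cref{prop:aut} identifies $\AUT_{\sAlgbi_k}(B)$ with the discrete strict 2-group $\AUT(B)^{\delta}$ — this is precisely the content of the corollary following \cref{prop:aut}. Finally I would apply \cref{prop:classcssa} to $B$, which is Picard-surjective and central simple: it yields a weak equivalence of crossed modules of Lie groups $\AUT(B)\cong\mathscr{B}k^{\times}\times(\Z_2)_{dis}$. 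Forgetting the smooth structures turns this into a weak equivalence of the underlying discrete 2-groups, $\AUT(B)^{\delta}\cong\mathscr{B}k^{\times}\times(\Z_2)_{dis}$, with $k^{\times}$ now read as an abstract group. Composing the three equivalences gives $\AUT_{\sAlgbi_k}(A)\cong\mathscr{B}k^{\times}\times(\Z_2)_{dis}$.

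There is no genuine obstacle here: the statement is a formal corollary of \cref{prop:picsur,prop:aut,prop:classcssa}. The only points needing a word of care are that the resulting weak equivalence is no longer canonical, since it depends on the choice of the Picard-surjective representative $B$ (which is why the statement asserts only existence), and that one must make sure this chosen $B$ is still central simple before quoting \cref{prop:classcssa}. If a smooth version of the statement were wanted, one would instead route through \cref{prop:moritainvariance2group} to retain the Lie group structures, but this is not needed for the 2-group statement as phrased.
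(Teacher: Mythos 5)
Your proof is correct and follows essentially the same route as the paper: pass to a Picard-surjective Morita-equivalent representative $B$ via \cref{prop:picsur}, observe that $B$ is still central simple, and combine \cref{prop:aut} with \cref{prop:classcssa}. The paper simply asserts that $B$ remains central and simple; your justification via \cref{TheoremCentralSimpleAlgebras} (central simple $\Leftrightarrow$ invertible in $\sAlgbi_k$, and invertibility is Morita-invariant) is a nice way to make that step explicit.
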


\section{The bicategory of implemented algebras}
\label{SectionImplementingModules}

In this section, we set up a sub-bicategory $\sAlgbiimp_k$ of the bicategory $\sAlgbi_k$ of algebras discussed in \cref{sec:bicatalgebras}, whose objects are all super algebras, but whose morphisms are only so-called \emph{implementing} bimodules. Our motivation is that only the sub-bicategory $\sAlgbiimp_k$ admits a bundle version. On the other hand,   neither the framing nor the symmetric monoidal structure restrict to this sub-bicategory.

\subsection{Implementing bimodules}

\label{sec:implementingbimodules}

We define a category $\Impcat_k$ whose  objects are triples $(A,M,B)$ consisting of super algebras $A$ and $B$ and an $A$-$B$-bimodule $M$, and whose morphisms $(A,M,B) \to (A',M',B')$ are triples $(\phi,u,\psi)$ where $\phi: A \to A'$ and $\psi: B \to B'$ are super algebra homomorphisms, and $u: M \to M'$ is an (even) intertwiner along $\phi$ and $\psi$, \ie it satisfies the relation \eqref{IntertwiningCondition}. 
Equivalently,  $u$ is an intertwiner $M \to \pss{\phi}M^\prime_\psi$ of $A$-$B$-bimodules. A further equivalent point of view is that $u$ \emph{implements} $\phi$ and $\psi$ in the sense that
\begin{comment}
The composition is well-defined:
suppose $(\phi_1,u_1,\psi_1):(A,M,B) \to (A',M',B')$ and $(\phi_2,u_2,\psi_2):(A',M',B') \to (A'',M'',B'')$ are morphisms, i.e.,
\begin{equation}
u_1(a \lact n \ract b) = \phi_1(a)\lact u_1(n) \ract \psi_1(b)
\end{equation} 
and 
\begin{equation}
u_2(a' \lact n' \ract b') = \phi_2(a')\lact u_2(n') \ract \psi_2(b')\text{.}
\end{equation} 
We have then
\begin{align*}
u_2u_1(a \lact n\ract b)&=u_2(\phi_1(a)\lact u_1(n) \ract \psi_1(b))
\\&=\phi_2(\phi_1(a))\lact u_2u_1(n) \ract \psi_2(\psi_1(b))\text{.} 
\end{align*}
\end{comment}
\begin{equation} \label{ImplementingRelation}
\phi(a) \lact m = u^{-1}(a \lact u(m)), \qquad m \ract \psi(b) =  u^{-1}(u(m) \ract b), \qquad a \in A, ~~m \in M, ~~b \in B\text{.}
\end{equation}
Of particular interest will be the automorphism groups of objects of the category $\Impcat_k$.

\begin{definition}[Implementers] \label{DefinitionImplementerSet}
Let $A$ and $B$ be super algebras, and let $M$ be an $A$-$B$-bimodule. The automorphism group 
\begin{equation*}
I(M) := \Aut_{\Impcat_k}(A,M,B)
\end{equation*}
is called the \emph{group of implementers} of $M$. 
\end{definition}

\begin{remark}
\begin{enumerate}[(1)]

\item 
For $(\phi,u,\psi) \in I(M)$, the data of $\phi$ and $\psi$ are redundant in the case that $A$ and $B$ act faithfully on $M$: in this case they are determined by \eqref{ImplementingRelation}.
This is false in general when the annihilators $\mathrm{Ann}(M) \subset A$ and $\mathrm{Ann}(M) \subset B$ are non-trivial.

\item
\label{re:twistedbimoduleimplementers}
As in any category, isomorphisms $(A,M,B)\cong (A',M',B')$ between objects induce isomorphisms $I(M)\cong I(M')$ of automorphism groups by conjugation. 
In particular, since for any automorphisms $\phi\in \Aut(A)$ and $\psi\in \Aut(B)$, the triple
 $(\phi,\id_M,\psi)$ is an isomorphism
$(A,\pss{\phi}M_\psi,B) \cong (A,M,B)$ we have $I(M) \cong I(\pss{\phi}M_\psi)$. 
\begin{comment}
        Indeed,
        \begin{align*}
                \id(a\lact_{\phi} m \ract_{\psi} b) = \phi(a)\lact m \ract \psi(b) =\phi(a) \lact \id(m) \ract \psi(b)
        \end{align*}
\end{comment}        

\item
\label{re:enlargement}
The group $I(M)$ can be seen as an enlargement   of the group  $\mathrm{Aut}_{A\text{-}B}(M)$ of invertible bimodule automorphisms, via the embedding $\mathrm{Aut}_{A\text{-}B}(M) \to I(M)$ sending $u$ to $(\id_A,u,\id_B)$. 
This enlargement  is essential when defining bimodule bundles, as it turns out that the transition functions of local trivializations take values in $I(M)$ and cannot be required to take values in the smaller group $\Aut_{A\text{-}B}(M)$ without resulting in a trivial theory.

\end{enumerate}

\end{remark}

It is easy to check that $I(M)$ is a subgroup of the Lie group $\mathrm{Aut}(A) \times \GL(M) \times \mathrm{Aut}(B)$.
To see that it is a closed subgroup, hence a Lie group itself, consider for $a\in A$, $b\in B$, $m \in M$, the continuous map 
\begin{equation*}
g_{a,b,m}: \mathrm{Aut}(A) \times \mathrm{Aut}(B) \times \GL(M) \to M, \qquad (\phi, u, \psi) \mapsto u(a\lact m \ract b) - \phi(a)\lact u(m) \ract \psi(b).
\end{equation*}
Then $I(M) = \bigcap_{a,b, m} g_{a,b, m}^{-1}(\{0\})$ is an intersection of closed sets, hence closed. 
The projection maps
\begin{equation} \label{SourceTargetMaps}
  p_\ell: I(M) \to \Aut(A), \qquad p_r: I(M) \to \Aut(B)
\end{equation} 
are smooth group homomorphisms. 

\begin{definition}[Implementing modules] \label{DefinitionImplementing}
Let $A$ and $B$ be super algebras and let $M$ be an $A$-$B$-bimodule.
Then, $M$ is called \emph{left-implementing} if $p_\ell$ is open, \emph{right implementing} if $p_r$ is open and \emph{implementing} if it is both left and right implementing.
\end{definition}

\begin{remark}
A few remarks are in order.
\begin{enumerate}[(1)]

\item \label{RemarkIdentityComponentsImplementing}
The condition that $p_\ell$ is open is equivalent to the condition that the restriction $p_\ell: I(M)_0 \to \Aut(A)_0$ to identity components is surjective. 
(We emphasize here that the notation $G_0$ stands for the identity component of a Lie group $G$ and should not be confused with notation $A_0$ for the even degree part of a super algebra $A$.)
\begin{comment}
Indeed, if $p_\ell$ is open, the image of $I(M)_0$ in $\Aut(A)_0$ is both open and closed, hence is all of $\Aut(A)_0$.
Conversely, if $p_\ell: I(M)_0 \to \Aut(A)_0$ is surjective, then we have an isomorphism $\Aut(A)_0 \cong I(M)_0 /\ker(p_\ell)$.
However, it is standard that the quotient map $I(M)_0 \to I(M)_0 /\ker(p_\ell)$ is open.
Similarly $p_r$ is open if and only if the restriction $p_r: I(M)_0 \to \Aut(B)_0$ is surjective. 
\end{comment}
In particular, it follows from these equivalent conditions that the restriction $p_{\ell}: I(M)_0 \to \Aut(A)_0$ is a submersion, and in turn, that the whole map $p_{\ell}: I(M) \to \Aut(A)$ is a submersion. The same remarks hold for $p_r$.

\item
It follows from the previous remark that any automorphism $\phi \in \Aut(A)_0$ can be implemented on any left implementing $A$-$B$-bimodule $M$, in the sense that there exists an invertible linear map $u :M \to M$ such that $\phi(a) \lact m = u^{-1}(a \lact u(m))$.
Similarly, if $M$ is right implementing, then any automorphism $\psi \in \Aut(B)_0$ can be implemented on $M$ in the sense that there exists $v$ such that $m \lact \psi(b) =  v^{-1}(v(m) \lact b)$.
If $M$ is (both left and right) implementing, then both $u$ and $v$ exist.
However, in general we cannot choose $u = v$, since the map $p_\ell \times p_r : I(M) \to \Aut(A)_0 \times \Aut(B)_0$ is \emph{not} necessarily surjective.

\item
In \cref{re:openinsteadofsurjective} we explain why we do not require that $p_\ell$ and $p_r$ are surjective.

\item
\label{RemarkBetterThanImplementingPicardSurjective}
If super algebras $A$ and $B$ are Picard-surjective (see \cref{DefinitionPicardSurjective}), then for an invertible $A$-$B$-bimodule, the maps $p_\ell : I(M) \to \Aut(A)$ and $p_r : I(M) \to \Aut(B)$ are surjective without restricting to the identity component.
To see this, let $\phi \in \Aut(A)$. 
Then $M^{-1} \otimes_A \prescript{}{\phi^{-1}}M$ is an invertible $B$-$B$-bimodule, and hence, by Picard-surjectivity of $B$, there exists $\psi\in\Aut(B)$ and an even invertible intertwiner $M^{-1} \otimes_A \pss{\phi^{-1}}M\cong B_{\psi}$. Tensoring with $M$ from the left, we obtain an even invertible intertwiner $u: \pss{\phi^{-1}}M \to M \otimes_B B_{\psi}\cong M_{\psi}$. 
Thus, $(\phi, u, \psi)$ is a preimage of $\phi$ under $p_\ell$. 
Surjectivity of $p_r$ is proved in the same way. 

\item \label{RemarkTwistedModulesImplementing}
Suppose $(\phi,u,\psi)$ is an isomorphism $(A,M,B)\cong (A',M',B')$ in $\Impcat_k$.
The corresponding  isomorphism $I(M) \cong I(M')$ is given by $(\alpha, v, \beta) \mapsto (\phi \alpha  \phi^{-1}, uvu^{-1}, \psi \beta  \psi^{-1})$. Since the conjugation action on a Lie group preserves the identity component, $M$ is (left/right) implementing if and only if $M'$ is (left/right) implementing.
\begin{comment}
Suppose $\alpha'\in \Aut(A')_0$. Consider $\alpha:=\phi^{-1}\alpha'\phi \in \Aut(A)$. Since conjugation preserves identity components, $\alpha\in \Aut(A)_0$.  If $M$ is left implementing, there exist $(\alpha,v,\beta)\in I(M)$. It gets sent to $(\phi \alpha  \phi^{-1}, uvu^{-1}, \psi \beta  \psi^{-1})\in I(M')$. Under $p_{\ell}$, this is projected to $\alpha'$. Hence, $M'$ is left implementing.
\end{comment}
In particular, using \cref{re:twistedbimoduleimplementers}, an $A$-$B$-bimodule $M$ is (left/right) implementing if and only if $\pss{\phi}M_\psi$ is (left/right) implementing, for any automorphisms $\phi\in \Aut(A)$ and $\psi\in \Aut(B)$.

\item
It is straightforward to show that the  direct sum of implementing bimodules is implementing.

\end{enumerate}
\end{remark}

\begin{example}
We give some first examples of implementing and non-implementing bimodules.
\begin{enumerate}[(1)]
\item 
 \label{ExampleImplementing1}
For any super algebra $A$, the identity bimodule $A$ is implementing: Any automorphism $\phi \in \Aut(A)$ is both left and right implemented by $\phi^{-1}$.
It follows from \cref{RemarkTwistedModulesImplementing} that also the $A$-$A$-bimodules $\pss{\phi}A_\psi$ are implementing, for any $\phi, \psi \in \Aut(A)$.
\begin{comment}
\begin{equation*}
  \phi(a) \lact b = \phi(a \lact \phi^{-1}(b)), \qquad b \ract \phi(a) = \phi(\phi^{-1}(b) \ract a)
\end{equation*}
\end{comment}

\item \label{ExampleImplementingExteriorTensorProductWithVB}
If $M$ is an $A$-$B$-bimodule and $E$ is a super vector space, then the exterior tensor product $M \otimes_k E$ is again an $A$-$B$-bimodule, with bimodule structure
\begin{equation*}
  a \lact (m \otimes v) \ract b = (-1)^{|b||v|} (a \lact m \ract b) \otimes v, \qquad a \in A, ~~b \in B, ~~ m \in M, ~~ v \in E.
\end{equation*}
If $M$ is implementing, then $M \otimes_k E$ is again implementing: If $(\phi, u, \psi) \in I(M)$, then $(\phi, u \otimes_k \id_E, \psi) \in I(M \otimes_k E)$.
\begin{comment}
Indeed, 
\begin{align*}
(u \otimes_k \id_E)(a \lact (m \otimes v) \ract b)
&= (-1)^{|b||v|} (u \otimes_k \id_E)((a \lact m \ract b) \otimes v) \\
&= (-1)^{|b||v|} u(a \lact m \ract b) \otimes v \\
&= (-1)^{|b||v|} (\phi(a) \lact u( m ) \ract \psi(b)) \otimes v \\
&= \phi(a) \lact (u( m ) \otimes v )  \ract \psi(b))
\end{align*}
\end{comment}

\item \label{ExampleImplementing2}
        Let $A = \Lambda k$, the exterior algebra on one generator $\epsilon$, with $\epsilon^{2}=0$.
We have $\Aut(A) \cong k^\times = \GL_1(k)$, where the automorphism $\varphi_\lambda$ corresponding to $\lambda \in k^\times$ is given by $\varphi_\lambda(a + b\epsilon) = a + \lambda b  \epsilon$.
\begin{comment}
That $\varphi_\lambda$ defined this way is an algebra automorphism is easy to check.
To see that all are of this form, let $\varphi \in \Aut(A)$. 
Since $\varphi$ is required to be unital, it is determined by its value on $\epsilon$. 
Suppose that $\varphi(\epsilon) = \mu + \lambda \epsilon$. 
Then
\begin{equation*}
  0 = \varphi(0) = \varphi(\epsilon^2) = \varphi(\epsilon)^2 = (\mu + \lambda \epsilon)^2 = \mu^2 + 2\mu\lambda \epsilon.
\end{equation*}
Hence $\mu = 0$ and $\varphi = \varphi_\lambda$.
\end{comment}
The module $M = \Lambda^1 k = \{ a \epsilon \mid a \in k\}$ is implementing both as an $A$-$A$-bimodule and as an $A$-$k$-bimodule: as $\pss{\phi} M_{\psi} = M$ for all automorphisms $\phi, \psi \in A$,  any pair of automorphisms of $A$ is implemented on $M$ by the identity map.
\begin{comment}
This is an example of an implementing but non-projective module.
\end{comment}

\item \label{ExampleImplementing3}
Let $A = \Lambda k^2 = \Lambda k \otimes \Lambda k$, the exterior algebra on two generators $\epsilon_1$ and $\epsilon_2$.
We claim that the $A$-$A$-bimodule $M = \Lambda k \otimes \Lambda^1 k$ is \emph{not} left implementing.
To see this, observe that $\mathrm{GL}_2(k)$ acts on $A$ by automorphisms, by functoriality of the exterior algebra, and that this action is faithful.
Let $\phi \in \Aut(A)$ be the automorphism associated to the linear map sending $\epsilon_1 \mapsto -\epsilon_2$, $\epsilon_2 \mapsto \epsilon_1$, which is contained in $\mathrm{GL}_2(k)_0 \subset \Aut(A)_0$. 
Suppose that $u$ acts implements $\phi$, then we have
\begin{equation} \label{wowefojn}
  \phi(\epsilon_2) \lact m = u^{-1} (\epsilon_2 \lact u(m)) = 0, \qquad \forall m \in M,
\end{equation}
since $\epsilon_2$ acts trivially on $M$.
However, $\phi(\epsilon_2) = \epsilon_1$ does not act non-trivially on $M$, hence the left hand side of \eqref{wowefojn} is not always zero, a contradiction.
One shows similarly that $M$ is also not right implementing.
\begin{comment}
 This is an example where the exterior tensor product of two implementing bimodules is not implementing.
\end{comment}

\item
\label{ExampleTwistedModuleImplementing}
We analyse when the $B$-$A$-bimodule $B_\varphi$ is implementing, for $\varphi:A \to B$ a (not necessarily invertible) super algebra homomorphism, in the case that $B = M_{n}(k)$.
As for any super algebra $B$, there is an injection $B \to \End(B)$, the space of vector space endomorphisms of $B$, given by $b \mapsto \ell_b$, \ie by sending $b$ to the left multiplication by $b$.
By definition, elements $(\phi, u, \psi) \in I(B_\varphi)$ satisfy $u(b) = u(b \lact 1) = \phi(b) \lact u(1)$ for each $b \in B$.
On the other hand, for $B = M_n(k)$, it is well known that any $\phi \in \Aut(B)$ is inner, hence $\phi = i(b)$ for some $b \in B$, which yields an element $(\phi = i(b), \ell_b, \id) \in I(B_\varphi)$.
\begin{comment}
  Indeed, $\ell_b(c \lact d \ract \varphi(a)) = bcd \varphi(a) = bcb^{-1} b d \varphi(a) = \phi(c) \lact \ell_b(d) \ract \varphi(a)$.
\end{comment}
Taking the quotient of the two elements in consideration, we obtain that also $(\id, \ell_{b^{-1}} u, \psi) \in I(B_\varphi)$. 
In other words, $\ell_{b^{-1}} u$ commutes with left multiplication by $b \in B$, hence $\ell_{b^{-1}} u$ is right multiplication by some element $b^\prime \in B^\times$.
This discussion shows that $B_\varphi$ is implementing if and only if for each $\psi \in \Aut(A)_0$, there exists an element $b^\prime \in B^\times$ such that 
\begin{equation} \label{ImplementingConditionExample}
i(b^\prime)^{-1} \circ \varphi = \varphi \circ \psi \text{.}
\end{equation}
\begin{comment}
  That $B_\varphi$ is implementing if for each $\psi \in \Aut(A)_0$, there exists $(\phi, u, \psi) \in I(B_\varphi)$. 
  But by the discussion before, if such an element exists, then also $(\id, r_{b^\prime}, \psi) \in I(B_\varphi)$ for some $b^\prime \in B$.
  This means that for all $a \in A$, $\varphi(a) b^\prime = r_{b^\prime}(1 \ract \varphi(a)) = r_{b^\prime}(1) \ract (\varphi\circ \psi)(a) = b^\prime (\varphi \circ \psi)(a)$.
\end{comment}
\item  
\label{ExampleTwistedModuleNotImplementing}
We now give a concrete example where $B_\varphi$ is not implementing in the situation of \cref{ExampleTwistedModuleImplementing}, inspired by \cite{Cornulier}. Suppose that $n = 2m$ is even and let $V$ be some $k$-vector space.
Let $A = k \oplus V$, where the product is given by $(\lambda, v) \cdot (\mu, w) = (\lambda \mu, \lambda w + \mu v)$, in other words, the product of any two elements in $V$ is zero.
Any vector space automorphism $a \in \mathrm{GL}(V)$ gives rise to an algebra automorphism $\psi_a \in \Aut(A)$ defined by $\psi_a(\lambda, v) = (\lambda, a(v))$.
Moreover, any linear map $\tilde{\varphi} : V \to M_m(k)$ gives rise to an algebra homomorphism $\varphi: A \to B = M_n(k)$, given by
\begin{equation*}
  \varphi(\lambda, v) = \begin{pmatrix} \lambda & 0 \\ \tilde{\varphi}(v) & \lambda \end{pmatrix},
\end{equation*}
where each block is a $k \times k$ matrix.
It is now easy to construct examples of $\tilde{\varphi}$ where for some $\psi = \psi_a$, there exists no $b^\prime \in B^\times$ with \eqref{ImplementingConditionExample}.
For example, if $\tilde{\varphi}$ is an isomorphism and both $\dim(V)$ and $m$ are at least $2$, then the image of $\tilde{\varphi}$ contains matrices of different rank, say $\tilde{\varphi}(v_1)$ has rank $r_1$ and $\tilde{\varphi}(v_2)$ has rank $r_2$, where $r_1 \neq r_2$.
Now as $v_1$ and $v_2$ are necessarily linearly independent, there exists $a \in \mathrm{GL}(V)$ exchanging the two.
Then $(\varphi \circ \psi_a)(0, v_1) = \varphi(0, v_2)$ has rank $r_2$, but $\varphi(0, v_1)$ has rank $r_1$.
But condition \eqref{ImplementingConditionExample} requires these elements to be conjugate in $M_n(k)$, which cannot be true as conjugate matrices have the same rank.

\end{enumerate}
\end{example}

Next we provide three systematic results about implementability. The first shows that the dual of an implementing module is again implementing. 
For an $A$-$B$-bimodule $M$, the \emph{dual bimodule} is the vector space $\underline{\Hom}_{k\text{-}B}(M, B)$ of not necessarily grading-preserving linear maps $M \to B$ that intertwine the right $B$-actions.
It is graded by declaring parity-preserving linear maps to be even and parity-reversing linear maps to be odd.
Moreover, it obtains the structure of a $B$-$A$-bimodule by setting
\begin{equation*}
  (b \lact \xi \ract a)(m) = b \lact \xi(a \lact m), \qquad a \in A, ~~m \in M, ~~ b \in B.
\end{equation*}

\begin{proposition} \label{PropAdjointImplementing}
If $M$ is a (left/right) implementing $A$-$B$-bimodule, then the dual module $\underline{\Hom}_{k\text{-}B}(M, B)$ is a (right/left) implementing $B$-$A$-bimodule.
\end{proposition}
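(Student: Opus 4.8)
The plan is to build a smooth group homomorphism $\Theta\colon I(M)\to I(N)$, where $N:=\underline{\Hom}_{k\text{-}B}(M,B)$ denotes the dual bimodule, that is compatible with the projection maps of \eqref{SourceTargetMaps}, and then to transport the implementing property across $\Theta$ using the identity-component criterion of \cref{RemarkIdentityComponentsImplementing} (namely, $p_\ell$ is open iff $p_\ell\colon I(M)_0\to\Aut(A)_0$ is surjective, and similarly for $p_r$). Throughout I keep in mind that $N$ is a $B$-$A$-bimodule, so its associated projections are $p_\ell^N\colon I(N)\to\Aut(B)$ and $p_r^N\colon I(N)\to\Aut(A)$.

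For the construction, given $(\phi,u,\psi)\in I(M)$ I would set $u^*\colon N\to N$, $\xi\mapsto\psi\circ\xi\circ u^{-1}$. First I would check $u^*(\xi)\in N$: since $u^{-1}(m\ract b)=u^{-1}(m)\ract\psi^{-1}(b)$ by \eqref{ImplementingRelation} and $\xi$ intertwines the right $B$-actions, one gets $u^*(\xi)(m\ract b)=u^*(\xi)(m)\ract b$. The map $u^*$ is a linear automorphism with inverse $\eta\mapsto\psi^{-1}\circ\eta\circ u$, and it is even because $\psi$ and $u$ are even, so it respects the grading on $N$. Next I would verify $(\psi,u^*,\phi)\in I(N)$, i.e. $u^*(b\lact\xi\ract a)=\psi(b)\lact u^*(\xi)\ract\phi(a)$: evaluating both sides at an arbitrary $m\in M$, using $a\lact u^{-1}(m)=u^{-1}(\phi(a)\lact m)$ (again from \eqref{ImplementingRelation}) and the fact that $\psi$ is an algebra homomorphism, the left-hand side becomes $\psi(b)\lact\psi\!\bigl(\xi(u^{-1}(\phi(a)\lact m))\bigr)=\bigl(\psi(b)\lact u^*(\xi)\ract\phi(a)\bigr)(m)$. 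Thus $\Theta(\phi,u,\psi):=(\psi,u^*,\phi)$ is a well-defined element of $I(N)$.

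It remains to observe that $\Theta$ is a group homomorphism — $(u_2u_1)^*=u_2^*\circ u_1^*$ is immediate from the formula, and the $\Aut$-components compose in the obvious way — and that $\Theta$ is smooth, being built from algebraic operations and inversion in $\GL(M)$; in particular $\Theta\bigl(I(M)_0\bigr)\subseteq I(N)_0$. By construction $p_r^N\circ\Theta=p_\ell^M$ and $p_\ell^N\circ\Theta=p_r^M$. Hence if $M$ is left-implementing, then $p_\ell^M\colon I(M)_0\to\Aut(A)_0$ is surjective by \cref{RemarkIdentityComponentsImplementing}, so $p_r^N\colon I(N)_0\to\Aut(A)_0$ is surjective as well, i.e. $N$ is right-implementing; the case ``right-implementing $\Rightarrow$ left-implementing'' is identical with the roles of $p_\ell$ and $p_r$ interchanged. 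I do not expect a serious obstacle here: the only point requiring care is the graded bookkeeping, but since the bimodule structure on $N$ involves no Koszul signs and $u^*$ is even, no signs enter, so the verifications above are as clean in the super setting as in the ungraded one.
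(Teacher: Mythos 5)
Your proof is correct and follows essentially the same approach as the paper: you construct the same map $u^*(\xi)=\psi\circ\xi\circ u^{-1}$ (the paper's $\tilde u$), verify it lands in the dual module and implements $(\psi,\phi)$, and conclude. The only addition is the explicit observation that $(\phi,u,\psi)\mapsto(\psi,u^*,\phi)$ is a smooth group homomorphism compatible with the projections, which is a useful clarification the paper leaves implicit when it says "this implies the proposition."
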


\begin{proof}
Let $(\phi, u, \psi) \in I(M)$.
Define $\tilde{u} : M \to B$ by $\tilde{u}(\xi) = \psi \circ \xi \circ u^{-1}$.
A straightforward calculation shows that $\tilde{u}(\xi) \in \underline{\Hom}_{k\text{-}B}(M, B)$.
Its inverse is given by $\tilde{u}^{-1} = \psi^{-1} \circ \xi \circ u$, hence $\tilde{u}$ is an invertible linear transformation of $\underline{\Hom}_{k\text{-}B}(M, B)$.
Another calculation shows that $(\psi, \tilde{u}, \phi) \in I(\underline{\Hom}_{k\text{-}B}(M, B))$.
This implies the proposition.
\begin{comment}
The first calculation is
\begin{align*}
 \tilde{u}(\xi)(m \ract b) &= \psi(\xi(u^{-1}(m \ract b))) 
 = \psi(\xi(u(m) \ract \psi^{-1}(b))) 
 = \psi(\xi(u(m)) \ract \psi^{-1}(b))
 = \tilde{u}(\xi)(m) \ract b,
\end{align*}
The second is
\begin{align*}
  \tilde{u}(b \lact \xi \ract a)(m) &= \psi((b \lact \xi \ract a)(u^{-1}(m))) \\
  &= \psi(b \lact \xi(a \lact u^{-1}(m))) \\
  &= \psi(b)\lact \phi(\xi(u^{-1}(\phi(a) \lact m))\\
  &= (\psi(b) \lact \tilde{u}(\xi) \ract \phi(a))(m),
\end{align*}
\end{comment}
\end{proof}

The next result assures that by requiring implementability we do not lose any invertible bimodules.

\begin{proposition} \label{PropositionInvertibleImplementing}
Let $A$ and $B$ be super algebras.
Then, any invertible $A$-$B$-bimodule $M$ is implementing.
\end{proposition}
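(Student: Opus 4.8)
The statement to prove is that for an invertible $A$-$B$-bimodule $M$ the projection maps $p_\ell\colon I(M)\to\Aut(A)$ and $p_r\colon I(M)\to\Aut(B)$ of \eqref{SourceTargetMaps} are open; by \cref{RemarkIdentityComponentsImplementing} this is the same as asking that their restrictions to identity components be surjective, so we only ever need to reach $\Aut(A)_0$ and $\Aut(B)_0$ and the non-Picard-surjective case is not excluded. The plan is to prove the stronger fact that the differential of $p_\ell$ at the identity is already surjective onto $\mathrm{Der}(A)$, the Lie algebra of $\Aut(A)$: a homomorphism of Lie groups whose differential is onto at the identity is a submersion, hence open, and then $M$ is left-implementing by \cref{DefinitionImplementing}. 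The two structural inputs are standard Morita theory: an invertible $A$-$B$-bimodule is finitely generated projective as a left $A$-module (and, symmetrically, as a right $B$-module), and the right action $b\mapsto\rho_b$, $\rho_b(m)=m\ract b$, is a bijection of $B$ with the algebra $\End_A(M)$ of left $A$-module endomorphisms of $M$ (an anti-isomorphism of algebras).

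I would fix an even derivation $\delta\in\mathrm{Der}(A)$ and proceed in three steps. First, lift $\delta$ to an even linear map $X\colon M\to M$ with $X(a\lact m)=\delta(a)\lact m+a\lact X(m)$: writing $M$ as a graded direct summand of a graded-free $A$-module $F$ via even $A$-linear maps $\iota\colon M\to F$, $\pi\colon F\to M$ with $\pi\iota=\id_M$, extend $\delta$ componentwise to $\widetilde\delta$ on $F$ and set $X:=\pi\circ\widetilde\delta\circ\iota$; the derivation identity for $X$ is forced by $A$-linearity of $\iota,\pi$. Second, extract a derivation of $B$ for free: for each $b\in B$ the commutator $[X,\rho_b]$ is again a left $A$-module endomorphism of $M$ (immediate from $\rho_b$ being $A$-linear and $X$ a $\delta$-derivation), hence $[X,\rho_b]=\rho_{\delta_B(b)}$ for a unique $\delta_B(b)\in B$; a short computation using that $b\mapsto\rho_b$ is an algebra anti-isomorphism shows $\delta_B\in\mathrm{Der}(B)$ is an even derivation and that $(\delta,X,\delta_B)$ satisfies the infinitesimal implementing relation
\begin{equation*}
X(a\lact m\ract b)=\delta(a)\lact m\ract b+a\lact X(m)\ract b+a\lact m\ract\delta_B(b).
\end{equation*}

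Third, integrate. Since $\delta$ and $\delta_B$ are derivations, $e^{t\delta}\in\Aut(A)$ and $e^{t\delta_B}\in\Aut(B)$, and I would show $(e^{t\delta},e^{tX},e^{t\delta_B})\in I(M)$ for every $t$ by a uniqueness argument for linear ODEs: both $t\mapsto e^{tX}(a\lact m\ract b)$ and $t\mapsto e^{t\delta}(a)\lact e^{tX}(m)\ract e^{t\delta_B}(b)$ solve $\dot y=X(y)$ with initial value $a\lact m\ract b$ — the second one precisely because of the infinitesimal relation — hence they coincide. Thus $(\delta,X,\delta_B)\in\mathrm{Lie}(I(M))$ and $dp_\ell$ sends it to $\delta$; as $\delta$ was arbitrary, $dp_\ell$ is surjective at the identity, $p_\ell$ is a submersion and hence open, and $M$ is left-implementing. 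That $M$ is right-implementing would be proved symmetrically, lifting derivations of $B$ along the right-projective module $M$ and using $A\cong\End_B(M)$; alternatively it follows from \cref{PropAdjointImplementing} applied to the (invertible, hence left-implementing by the above) $B$-$A$-bimodule $\underline{\Hom}_{k\text{-}B}(M,B)$, once its $A$-dual is identified with $M$.

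The hard part is really the initial realization that the problem should be linearized, together with the observation that every even derivation of $A$ lifts along the projective module $M$: once that is in place, the accompanying derivation $\delta_B$, the infinitesimal relation, the ODE integration, and the passage submersion $\Rightarrow$ open are all routine, and $I(M)$ being a Lie group is already established. The only genuinely delicate conceptual point is that one must be content with surjectivity onto the identity component $\Aut(A)_0$ — which is exactly what the definition of \emph{implementing} asks for — since for algebras that are not Picard-surjective the full map $p_\ell$ need not be surjective and \cref{RemarkBetterThanImplementingPicardSurjective} does not apply.
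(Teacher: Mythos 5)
Your proof is correct, but it takes a genuinely different route from the paper's.

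The paper argues globally and analytically: it writes $M = p(B^r\oplus\Pi B^s)$ as an image of an even idempotent in $M_n(B)$ (using that an invertible bimodule is f.g.\ projective as a right $B$-module), proves a functional-analytic lemma that idempotents at norm distance $<1/2$ in a submultiplicative norm are conjugate, and then, given $\psi\in\Aut(B)_0$, subdivides a path from $\id_B$ to $\psi$ finely enough to conjugate $\psi^{(n)}(p)$ back to $p$ step by step; the resulting invertible element yields an explicit implementer, and $\phi\in\Aut(A)$ is read off by identifying $A$ with $pM_n(B)p$. You instead linearize: lift a derivation $\delta\in\mathrm{Der}(A)$ to a $\delta$-derivation $X$ of the left $A$-module $M$ via a projective splitting $M\hookrightarrow F\twoheadrightarrow M$, extract $\delta_B\in\mathrm{Der}(B)$ from $[X,\rho_b]=\rho_{\delta_B(b)}$ using that $B^{\opp}\cong\End_{A\text{-}}(M)$ (faithfully balanced), verify the infinitesimal implementing relation, and integrate the one-parameter group by ODE uniqueness to conclude $dp_\ell$ is onto and hence $p_\ell$ a submersion. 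Both proofs use the same two Morita-theoretic inputs (finitely generated projectivity and the faithfully balanced property); your proof avoids the norm estimates and path subdivision at the cost of passing through the Lie algebra and invoking the exponential map, while the paper's approach is more explicit (it produces an implementer for an arbitrary $\psi\in\Aut(B)_0$, not only for exponentials of derivations) and its idempotent lemma does double duty later in the proof of \cref{LemmaOrbit}. One small inaccuracy in your closing remark: applying \cref{PropAdjointImplementing} to $\underline{\Hom}_{k\text{-}B}(M,B)$ establishes that the dual of $M$ is right-implementing with respect to $\Aut(A)$, which is not what is wanted; to deduce that $M$ itself is right-implementing (with respect to $\Aut(B)$) you should instead run your Lie-algebra argument directly on $M$ as a right $B$-module, or apply it to the inverse $B$-$A$-bimodule $M^{-1}$ (showing it left-implementing over $B$) and then dualize.
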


\begin{proof}
Since $M$ is invertible, it is finitely generated and projective as a right $B$-module. 
    \begin{comment}
    It is well(?)-known (in the ungraded case) that a bimodule has left and right adjoint if and only if it is finitely generated projective both as a left and a right module.
    In the graded case, we show this in our 2-vector spaces paper. 
    \end{comment}
Therefore, as a right $B$-module, it has the form $M = p (B^r \oplus \Pi B^s)$ for some numbers $r, s \in \N$ and an even idempotent $p \in M_n(B)$, where $n = r+s$.

To show surjectivity of $p_r$, we need the following general lemma.

\begin{lemma} \label{LemmaCloseProjections}
Let $B$ be a super algebra and let $p, q \in B$ be two idempotents.
If for some submultiplicative norm on $B$, we have $\|p-q\| < 1/2$, then there exists an invertible element $b \in B$ such that $q = bpb^{-1}$.
\end{lemma}

\begin{proof}
Set $b = qp + (1-q)(1-p)$. 
Because
\begin{equation*}
  \|1 - b\| = \|q + p - 2qp\| = \|(1-q)(p-q) - q(p-q)\| \leq 2 \|p-q\| < 1,
\end{equation*}
the element $b$ is invertible, with $b^{-1} = \sum_{k=0}^\infty (1-b)^k$, where the series converges absolutely owing to the norm bound on $1-b$ and the fact that as $B$ is finite-dimensional, it is complete.
One calculates that  $qb = qp = bp$, hence $q = bpb^{-1}$, as claimed.
\end{proof}

\noindent Now let $\psi \in \Aut(B)_0$ and let $(\psi_s)_{s \in [0, 1]}$ be a continuous path in $\Aut(B)_0$ with $\psi_0 = \id_B$ and $\psi_1 = \psi$.
Let $0  = s_0 < s_1 < \dots < s_N = 1$ be a partition of the interval $[0, 1]$ such that we have $\|\psi^{(n)}_{s_k}(p) - \psi^{(n)}_{s_{k-1}}(p)\| < 1/2$, where $\psi^{(n)}_s$ is the automorphism of $M_n(B)$ induced by $\psi_t$.
By \cref{LemmaCloseProjections}, there exist $b_k \in M_n(B)$ such that $\psi_{k}(p) = b_k \psi_{k-1}(p) b_k^{-1}$.
Setting $b = b_1 \cdots b_N$, we obtain $b \psi(p) b^{-1} = p$.
 We now consider the vector space automorphism $\tilde{u}$ of $B^r \oplus \Pi B^s$ given by
\begin{equation*}
  \tilde{u}(v) = b\psi^{(n)}(v).
\end{equation*}
We claim that $\tilde{u}$ restricts to a vector space automorphism $u$ of $M \subset B^r \oplus B^s$ that implements $\psi$.
Indeed, for $pv \in M$, we have
\begin{equation*}
  \tilde{u}(pv) = b\psi^{(n)}(pv) = b \psi^{(n)}(p) b^{-1} b \psi^{(n)}(v) = p  \psi^{(n)}(v) \in M,
\end{equation*}
hence $\tilde{u}$ preserves $M$.
Now for $a \in M_n(B)$ and $b \in B$,
\begin{align*}
  u(a \lact pv \ract b) &= b \psi^{(n)}(a \lact pv \ract b) \\
  &= b\psi^{(n)}(a) b^{-1} \lact b\psi^{(n)}(p)\psi^{(n)}(v) \ract \psi(b) \\
  &= b \psi^{(n)}(a) b^{-1} \lact p b\psi^{(n)}(v) \ract \psi(b) \\
  &= b \psi^{(n)}(a) b^{-1} \lact p \tilde{u}(v) \ract \psi(b) \\
  &= b \psi^{(n)}(a) b^{-1} \lact u(v) \ract \psi(b).
\end{align*}
Hence $u$ implements $\psi$.
Now, again since $M$ is invertible, it is faithfully balanced and we can identify $A = \underline{\End}_{k\text{-}B}(M) = p M_n(B) p$, the super algebra of (not necessarily grading preserving) linear maps on $M$ commuting with the right $B$-action, acting on $M$ by left multiplication.
It is clear that if $a \in A$, \ie $a$ commutes with the right multiplication with elements of $B$, then also $b \psi^{(n)}(a) b^{-1}$ commutes with the right multiplication by elements of $B$, hence is contained in $A$.
\begin{comment}
Alternatively, one can compute $b \psi(pxp) b^{-1} = p bxb^{-1} p$.
\end{comment}
Therefore $\phi(a) = b \psi^{(n)}(a) b^{-1}$ defines an automorphism of $A$.
In total, we obtain that $(\phi, u, \psi) \in I(M)$, hence $M$ is right implementing.
\\
The proof that $M$ is left implementing is completely analogous.
\end{proof}

\begin{comment}
If would be great if one could relax the assumptions of the lemma to the case that $M$ is only projective (both as a left $A$-module and as a right $B$-module), but not necessarily invertible.
The problem in this case is that we only have $A \subset \underline{\End}_{k \text{-} B}(M) =: E$, and it is not clear anymore that $\phi(a) = X \psi(a) X^{-1}$ still preserves $A$.
\\
In general, $\phi$ will map $A$ to a different subalgebra $\phi(A)$ of $E$.
The terminology here is that $A$ and $\phi(A)$ are \emph{conjugate} subalgebras of $E$.
The proof now could be repaired if one knew that $A$ and $\phi(A)$ were \emph{inner conjugate}, \ie $\phi$ is an inner automorphism of $E$ (then, if $\phi = \mathrm{Ad}_e$ for some $e \in E$, one could just modify $u$ by postcomposing with left multiplication by $e$ in order to obtain an element of $I(M)$). 
This would somehow have to follow from the assumption that $M$ is finitely generated projective as a left $A$-module which has not been used yet.
\end{comment}

\begin{corollary}
For any super algebra $A$, the inclusion $\sAlgbiimp_k \to \sAlgbi_k$ induces an isomorphism of 2-groups $\AUT_{\sAlgbiimp_k}(A) \cong \AUT_{\sAlgbi_k}(A)$. In particular, the Lie 2-group $\AUT(A)$ defined in \cref{sec:aut} is a smooth version of the automorphism group of $A$ in $\AUT_{\sAlgbiimp_k}(A)$.
\end{corollary}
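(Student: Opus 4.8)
The plan is to observe that $\sAlgbiimp_k$ is a sub-bicategory of $\sAlgbi_k$ which has the \emph{same} objects and is \emph{locally full}, i.e.\ it retains all 2-morphisms between the 1-morphisms it keeps, and then to check that on the level of automorphism 2-groups the inclusion becomes bijective on objects. Recall that for a bicategory $\mathscr{C}$, the 2-group $\AUT_{\mathscr{C}}(A)$ is the monoidal groupoid of invertible 1-morphisms $A\to A$ together with the invertible 2-morphisms between them, with monoidal product given by composition. The inclusion $\iota\colon\sAlgbiimp_k\to\sAlgbi_k$ is a monoidal 2-functor that is the identity on objects and on 2-morphisms, so it induces a strict monoidal functor $\AUT_{\sAlgbiimp_k}(A)\to\AUT_{\sAlgbi_k}(A)$; it suffices to show that this functor is an isomorphism of categories, for then it is automatically an isomorphism of 2-groups.

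Faithfulness and fullness are immediate: for implementing $A$-$A$-bimodules $M$ and $M'$, the 2-morphisms $M\Rightarrow M'$ in $\sAlgbiimp_k$ are by definition the same parity-preserving bimodule intertwiners as in $\sAlgbi_k$, and restricting to the invertible ones on both sides changes nothing. Bijectivity on objects is where \cref{PropositionInvertibleImplementing} enters. An object of $\AUT_{\sAlgbi_k}(A)$ is an invertible $A$-$A$-bimodule $M$ with some inverse bimodule $N$; by \cref{PropositionInvertibleImplementing} both $M$ and $N$ are implementing, and since $N$, the coherence intertwiners $M\otimes_A N\cong A$ and $N\otimes_A M\cong A$ (intertwiners being the same in both bicategories), and the identity bimodule $A$ (implementing by \cref{ExampleImplementing1}) all lie in the sub-bicategory, $M$ is already invertible \emph{in} $\sAlgbiimp_k$. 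Thus $M$ is in the image of $\iota$ on the nose, and injectivity on objects is clear. Hence $\iota$ is an isomorphism of monoidal categories, i.e.\ of 2-groups.

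There is no real obstacle here beyond bookkeeping; the one point requiring care is the distinction between a 1-morphism being invertible in the sub-bicategory $\sAlgbiimp_k$ versus in the ambient $\sAlgbi_k$, and this is precisely what \cref{PropositionInvertibleImplementing} resolves, guaranteeing that the inverse of an invertible bimodule is again implementing. Finally, the ``in particular'' clause follows by composing the isomorphism just established with the identification $\AUT(A)^{\delta}\cong\AUT_{\sAlgbi_k}(A)$ of \cref{prop:aut} in the Picard-surjective case (and its Morita-invariant extension otherwise), which exhibits the crossed module of Lie groups $\AUT(A)$ as a smooth refinement of $\AUT_{\sAlgbiimp_k}(A)$.
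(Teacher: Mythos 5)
Your argument is correct and is the same one the paper intends: the corollary is stated immediately after \cref{PropositionInvertibleImplementing} precisely because it follows from it by the bookkeeping you carry out (the inclusion is locally full, bijective on objects, and hence an isomorphism of monoidal groupoids), so there is nothing further to add.
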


\begin{remark}
\label{re:openinsteadofsurjective}
For $A = M_2(k) \oplus M_2(k)$, $B = k \oplus M_2(k)$, the module $M = k^2 \oplus M_2(k)$ is invertible, hence implementing by \cref{PropositionInvertibleImplementing}, but the automorphism of $A$ that exchanges the two copies of $M_2(k)$ (which is not contained in $\Aut(A)_0$) is not implemented on $M$.
 Hence \cref{PropositionInvertibleImplementing} would be false if we would strengthen \cref{DefinitionImplementing} by requiring the maps $p_r$ and $p_\ell$  be surjective instead of open, see \cref{RemarkIdentityComponentsImplementing}.
\end{remark}

Our third result concerns super algebras $A$ whose first graded Hochschild cohomology vanishes, $\mathrm{HH}^1(A)=0$. 
$\mathrm{HH}^1(A)$ can be defined as the quotient $\mathrm{Der}(A)/\mathrm{InnDer}(A)$ of even derivations on $A$ by inner derivations on $A$.
We recall that graded Hochschild cohomology is a Morita invariant of super algebras \cite{Kassel1986,Blaga2006}. 

\begin{proposition} \label{PropHochschildImplementing}
Let $A$ and $B$ be super algebras and let $M$ be an $A$-$B$-bimodule.
\begin{enumerate}[(a)]
\item
If $\mathrm{HH}^1(A) = 0$, then $M$ is left implementing.

\item
If $\mathrm{HH}^1(B) = 0$, then $M$ is right implementing.

\end{enumerate}
\end{proposition}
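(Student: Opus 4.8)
The plan is to prove (a) directly; (b) then follows by the same argument with the roles of $A$ and $B$ (equivalently, of the left and right actions) interchanged, since both $\mathrm{HH}^1$ and the group $I(M)$ are symmetric under this exchange. By \cref{RemarkIdentityComponentsImplementing}, proving (a) amounts to showing that $p_\ell \colon I(M)_0 \to \Aut(A)_0$ is surjective, and the whole argument is organized around producing, for each $\phi \in \Aut(A)_0$, an implementer built from an inner structure.

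First I would show that the hypothesis $\mathrm{HH}^1(A) = 0$ forces every $\phi \in \Aut(A)_0$ to be inner, in fact of the form $i(a)$ with $a$ in the identity component $(A_0^{\times})_0$. The point is that $i \colon A_0^{\times} \to \Aut(A)$ is a homomorphism of Lie groups whose differential at the identity is $a \mapsto \mathrm{ad}_a = [a,-]$, with image the even inner derivations, while the Lie algebra of $\Aut(A)$ is the space $\mathrm{Der}(A)$ of even derivations. Since $\mathrm{HH}^1(A) = \mathrm{Der}(A)/\mathrm{InnDer}(A) = 0$, this differential is surjective, so $i$ is a submersion, in particular an open map; restricting $i$ to $(A_0^{\times})_0$ then exhibits $i\big((A_0^{\times})_0\big)$ as an open connected subgroup of the connected group $\Aut(A)_0$, hence all of it.

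Next, given $\phi \in \Aut(A)_0$, I would write $\phi = i(a)$ with $a \in (A_0^{\times})_0$ and take the implementer to be $u := \ell_a \colon M \to M$, left multiplication by $a$ (an even linear automorphism with inverse $\ell_{a^{-1}}$, since $a$ is even and invertible), together with $\psi := \id_B$. A one-line computation using $\phi = i(a)$ and the commutativity of the left $A$-action and right $B$-action verifies the intertwining relation \eqref{IntertwiningCondition}, so $(\phi, u, \psi) \in I(M)$ and $p_\ell(\phi, u, \psi) = \phi$. To see that this triple lies in the identity component, I would choose a path $a_t$ in $A_0^{\times}$ from $1$ to $a$ and note that $t \mapsto (i(a_t), \ell_{a_t}, \id_B)$ is a continuous path in $I(M)$ from $(\id_A, \id_M, \id_B)$ to $(\phi, u, \psi)$; hence $(\phi, u, \psi) \in I(M)_0$, and $p_\ell|_{I(M)_0}$ is surjective.

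The only genuinely non-formal step is the first one, where the infinitesimal hypothesis $\mathrm{HH}^1(A) = 0$ is converted into the global identity $\Aut(A)_0 = i\big((A_0^{\times})_0\big)$; this rests on the standard facts that $\Aut(A)$ is a Lie group with Lie algebra $\mathrm{Der}(A)$, that a Lie group homomorphism with surjective differential at the identity is a submersion (hence open), and that a connected open subgroup of a connected Lie group is the whole group. Everything after that — writing down $\ell_a$, checking the intertwining relation, and connecting the resulting triple to the identity — is routine.
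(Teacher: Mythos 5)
Your proof is correct and follows the same route as the paper's: both translate $\mathrm{HH}^1(A)=0$ via Lie theory into the statement that $\Aut(A)_0$ consists of inner automorphisms, and then exhibit the inner automorphism $i(a)$ as implemented (paired with $\psi = \id_B$) on any bimodule by left multiplication $\ell_a$. Your write-up is more careful than the paper's quite terse proof, in particular in noting that $a$ may be taken in $(A_0^\times)_0$ and explicitly producing a path to show $(\phi,\ell_a,\id_B)\in I(M)_0$; the paper leaves these last steps implicit (and its closing claim that $\Aut(B)_0=\{1\}$ is a slip for $\Aut(B)_0\subseteq\mathrm{Inn}(B)$), but the argument is the same.
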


\begin{proof}
The Lie algebra $\mathrm{Der}(A)$ of derivations on $A$ is the Lie algebra of $\Aut(A)$ and the Lie algebra $\mathrm{InnDer}(A)$ is the Lie algebra of $\mathrm{Inn}(A) = A^\times/Z(A)^\times$, the Lie group of inner automorphisms.
The condition $\mathrm{HH}^1(A) = \mathrm{Der}(A)/\mathrm{InnDer}(A) = 0$ implies that the quotient $\Aut(A)/\mathrm{Inn}(A)$ is discrete, hence its identity component is trivial. 
Therefore $p_\ell$ is trivially surjective. 
Similarly, $\mathrm{HH}^1(B) = 0$ implies that $\Aut(B)_0 = \{1\}$ and $p_r$ is surjective.
\end{proof}

\begin{comment}
An example of a non-semi-simple algebra with $\mathrm{HH}^1(A)=0$ is the algebra of $2\times 2$ upper triangular matrices. 
\end{comment}
The graded Hochschild cohomology of a \emph{separable} super algebra vanishes.
In the ungraded case, this is a result of Hochschild \cite[Thm. 4.1]{Hochschild1945}. In the graded case, we can see this as follows. As we are working with finite-dimensional algebras over $k=\R$ or $\C$, which are perfect fields, \emph{separable} super algebras are the same as \emph{semisimple} super algebras.
Here, a super algebra is called \emph{semisimple} if it is a direct sum of simple super algebras. As Hochschild cohomology is additive, it suffices to look at these simple factors separately.
By the graded Artin-Wedderburn theorem (see \cite{Jozefiak1988}), every simple super algebra $A$ is of the form $M_n(D)$, where $D$ is a super division algebra (\ie each homogeneous element of $D_i$ is invertible). These are Morita equivalent to Clifford algebras, which  have $\mathrm{HH}^n=0$ for $n\geq 1$ \cite[Prop. 1]{Kassel1986}.

Let us summarize these observations as follows.

\begin{corollary}
\label{co:ssimp}
\label{RemarkHHSemisimple}
If $A$ and $B$ are semisimple super algebras, then every $A$-$B$-bimodule is implementing.
\end{corollary}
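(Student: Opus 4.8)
The plan is to deduce this directly from \cref{PropHochschildImplementing} together with the vanishing of the first graded Hochschild cohomology for semisimple super algebras. In other words, I would not prove anything new about implementers; the entire content is to show that a semisimple super algebra $A$ satisfies $\mathrm{HH}^1(A)=0$, and then invoke \cref{PropHochschildImplementing} for both $A$ and $B$.

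First I would recall that, since $k=\R$ or $\C$ is a perfect field and all algebras are finite-dimensional, \emph{semisimple} and \emph{separable} coincide, and a semisimple super algebra is by definition a finite direct sum of simple super algebras. Because graded Hochschild cohomology is additive over direct sums of algebras, it is enough to treat each simple summand. By the graded Artin--Wedderburn theorem every simple super algebra has the form $M_n(D)$ with $D$ a super division algebra, and such an algebra is Morita equivalent to a Clifford algebra; Clifford algebras satisfy $\mathrm{HH}^m=0$ for all $m\geq 1$, and since graded Hochschild cohomology is a Morita invariant, $\mathrm{HH}^1$ of each simple summand vanishes. Hence $\mathrm{HH}^1(A)=\mathrm{HH}^1(B)=0$.

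Finally, applying part (a) of \cref{PropHochschildImplementing} with $\mathrm{HH}^1(A)=0$ shows that every $A$-$B$-bimodule $M$ is left implementing, and part (b) with $\mathrm{HH}^1(B)=0$ shows that it is right implementing; by \cref{DefinitionImplementing} it is therefore implementing. There is no genuine obstacle here: the only nonroutine ingredients are the additivity and Morita invariance of graded Hochschild cohomology and its vanishing on Clifford algebras, all of which are supplied by the references cited in the preceding paragraph; everything else is assembling statements already proved.
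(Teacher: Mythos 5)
Your proof is correct and follows essentially the same route as the paper: the paragraph preceding the corollary establishes $\mathrm{HH}^1=0$ for semisimple super algebras via additivity, graded Artin--Wedderburn, Morita equivalence to Clifford algebras, and Morita invariance of graded Hochschild cohomology, and then the corollary is read off from \cref{PropHochschildImplementing} exactly as you describe.
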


\begin{remark}
The reason to formulate \cref{PropHochschildImplementing} using Hochschild cohomology (instead of just requesting all derivations of $A$ to be inner) is that this condition is more apparently Morita invariant, by Morita invariance of Hochschild cohomology.
\end{remark}

\begin{example}
A simple example of a super algebra $A$ where $\mathrm{HH}^1(A) \neq 0$ is the supercommutative super algebra $\Lambda k$.
It has no inner derivations since it is supercommutative, but $\mathrm{HH}^1(A) \cong k$, spanned by the even derivation $D$ determined by $D(\epsilon) = \epsilon$, where $\epsilon$ is the odd generator of $\Lambda k$.
\end{example}

\subsection{The relative tensor product of implementing bimodules}

By the following result we make sure that one can define a bicategory whose morphisms are implementing bimodules.

\begin{proposition} \label{PropositionTensorProdImplementing}
Let $A$, $B$, and $C$ be super algebras, let $M$ be an $A$-$B$-bimodule and let $N$ be a $B$-$C$-bimodule.
\begin{enumerate}[(1)]
\item \label{PropositionTensorProdImplementingA}
If $M$ is implementing and $N$ is left implementing, then $M \otimes_B N$ is left implementing.
\item \label{PropositionTensorProdImplementingB}
If $M$ is right implementing and $N$ is implementing, then $M \otimes_B N$ is right implementing.
\item \label{PropositionTensorProdImplementingC}
 If $M$ and $N$ are implementing, then $M \otimes_B N$ is implementing.
\end{enumerate}
\end{proposition}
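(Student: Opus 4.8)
The plan is to reduce everything to the first of the three statements. Granting statement~(1), statement~(2) follows by the mirror-image argument obtained by interchanging the roles of left and right and reversing the order of the two factors (equivalently, by passing to opposite algebras); alternatively, statement~(2) can be deduced from~(1) together with \cref{PropAdjointImplementing} and the standard identification of the dual bimodule of $M\otimes_B N$ with $N^{\ast}\otimes_B M^{\ast}$. Granting (1) and (2), statement~(3) is immediate: if $M$ and $N$ are both implementing, then $M$ is implementing and $N$ is left implementing, so $M\otimes_B N$ is left implementing by~(1); and $M$ is right implementing and $N$ is implementing, so $M\otimes_B N$ is right implementing by~(2); hence $M\otimes_B N$ is implementing. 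So the real task is to prove statement~(1).

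For that, the strategy is as follows. By \cref{RemarkIdentityComponentsImplementing} it suffices to show that $p_\ell\colon I(M\otimes_B N)_0 \to \Aut(A)_0$ is surjective, and since its image is a subgroup of the connected Lie group $\Aut(A)_0$, it is enough to exhibit, for every $\phi$ in a neighbourhood of $\id_A$, an element of $I(M\otimes_B N)_0$ lying over $\phi$. I would construct it by chasing the hypotheses. Since $M$ is left implementing, $p_\ell\colon I(M)_0\to\Aut(A)_0$ is a surjective submersion (\cref{RemarkIdentityComponentsImplementing}) and hence admits a local section $s$ near $\id_A$ with $s(\id_A)=\unit$; write $s(\phi)=(\phi,u,\psi)$, where $\psi := p_r(s(\phi))$ then lies in $\Aut(B)_0$ and depends continuously on $\phi$, with $\psi=\id_B$ at $\phi=\id_A$. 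Since $N$ is left implementing, likewise $p_\ell\colon I(N)_0\to\Aut(B)_0$ admits a local section $t$ near $\id_B$ with $t(\id_B)=\unit$, and for $\phi$ sufficiently close to $\id_A$ we may form $t(\psi)=(\psi,v,\chi)$ with $\chi\in\Aut(C)_0$.

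The crucial point — and the conceptual heart of the argument — is that $u\otimes_k v$ descends to a well-defined invertible linear endomorphism $w := u\otimes_B v$ of the relative tensor product $M\otimes_B N$. This uses exactly the implementing relations $u(m\ract b)=u(m)\ract\psi(b)$ (from $(\phi,u,\psi)\in I(M)$) and $v(b\lact n)=\psi(b)\lact v(n)$ (from $(\psi,v,\chi)\in I(N)$) — note that the \emph{same} automorphism $\psi$ of $B$ occurs in both — which together show that $u\otimes_k v$ preserves the subspace of $M\otimes_k N$ generated by the elements $(m\ract b)\otimes n - m\otimes(b\lact n)$; invertibility follows in the same way because $(\phi^{-1},u^{-1},\psi^{-1})\in I(M)$ and $(\psi^{-1},v^{-1},\chi^{-1})\in I(N)$, so $u^{-1}\otimes_B v^{-1}$ is defined and is inverse to $w$. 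A routine check on simple tensors — using that $A$ acts only on the $M$-factor and $C$ only on the $N$-factor of $M\otimes_B N$, together with the intertwining relations for $u$ and $v$ — then shows $w\bigl(a\lact(m\otimes n)\ract c\bigr)=\phi(a)\lact w(m\otimes n)\ract\chi(c)$, so that $(\phi,w,\chi)\in I(M\otimes_B N)$ with $p_\ell(\phi,w,\chi)=\phi$. Finally, $\phi\mapsto(\phi,w,\chi)$ is continuous on a connected neighbourhood of $\id_A$ and sends $\id_A$ to $\unit$, so its values all lie in $I(M\otimes_B N)_0$; this provides the required preimages and proves that $M\otimes_B N$ is left implementing.

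I expect the main obstacle to be recognising the right mechanism rather than any hard computation: implementing an automorphism $\phi$ of $A$ on $M$ automatically drags along a companion automorphism $\psi$ of $B$, and the tensor product descends precisely because $N$ — being left implementing — can implement that \emph{same} $\psi$. The only genuinely fiddly point is the identity-component bookkeeping needed to guarantee membership in $I(M\otimes_B N)_0$ (not merely in $I(M\otimes_B N)$), which I would handle via local sections through the identity and the fact that a connected topological group is generated by any neighbourhood of the identity; the well-definedness of $u\otimes_B v$, the intertwining identity, and the continuity of the constructed family are all routine. (It is worth noting that this argument in fact only invokes left-implementability of $M$ and $N$.)
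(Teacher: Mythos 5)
Your argument is correct, and at its core it is the paper's own. Both proofs hinge on the same mechanism: a left implementer $(\phi,u,\psi)\in I(M)$ comes with a companion automorphism $\psi\in\Aut(B)$, the hypothesis that $N$ is left-implementing lets one choose $(\psi,v,\chi)\in I(N)$ for that \emph{same} $\psi$, and $u\otimes_k v$ descends to $M\otimes_B N$ precisely because the two intertwining relations share $\psi$. The only real difference is how the identity-component bookkeeping is packaged: the paper forms the Lie group $I(M)\times_{\Aut(B)}I(N)$ and lifts a path in $I(M)_0$ along the fiber bundle $p_\ell\colon I(N)_0\to\Aut(B)_0$, whereas you take local sections of both submersions near $\unit$ and close with the open-subgroup argument. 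These are equivalent and both legitimate; yours is slightly more self-contained (no need to discuss when the fiber product is a Lie group or why the lift lands in its identity component), while the paper's makes the underlying diagram of groups explicit. Your parenthetical that only left-implementability of $M$ is used is accurate, and inspection shows the paper's own proof of part~(1) also does not actually use right-implementability of $M$, so the stated hypothesis is mildly redundant.

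One caution on your offered \emph{alternative} deduction of~(2): the identification $\underline{\Hom}_{k\text{-}C}(M\otimes_B N,C)\cong N^{\ast}\otimes_B M^{\ast}$, with $N^{\ast}=\underline{\Hom}_{k\text{-}C}(N,C)$ and $M^{\ast}=\underline{\Hom}_{k\text{-}B}(M,B)$, does not hold for arbitrary finite-dimensional bimodules. The canonical comparison map $N^{\ast}\otimes_B M^{\ast}\to\underline{\Hom}_{k\text{-}C}(M\otimes_B N,C)$ is an isomorphism when $M$ is finitely generated projective as a right $B$-module, but can fail otherwise; for instance, with $B=k[\epsilon]/(\epsilon^2)$, $A=C=k$, and $M=N=k$ with trivial $\epsilon$-action, both sides are one-dimensional but the comparison map is zero. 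Your primary reduction of~(2) to~(1) via passage to opposite algebras avoids duals altogether and is the one to keep.
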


\begin{proof}
We will show (1). Statement (2) follows analogously and statement (3) follows from combining (1) and (2).
By \cref{RemarkIdentityComponentsImplementing}, we have to show that the homomorphism $p_\ell :I(M \otimes_B N)_0 \to \Aut(A)_0$ is surjective.
\\
To this end, consider the fiber product $I(M) \times_{\Aut(B)} I(N)$ (over the map $p_r$ for $I(M)$ and $p_\ell$ for $I(N)$), which is  again a Lie group because the assumption that $M$ and $N$ are implementing implies that $p_\ell$ and $p_r$ are submersions (see \cref{RemarkIdentityComponentsImplementing}).
It can be identified with the set of tuples $(\alpha, u, \beta, v, \gamma)$, where $(\alpha, u, \beta) \in I(M)$ and $(\beta, v, \gamma) \in I(N)$.
One now checks that the map  
\begin{equation*}
I(M) \times_{\Aut(B)} I(N) \to I(M \otimes_B N)\quomma
(\alpha, u, \beta, v, \gamma) \mapsto (\alpha, u \otimes v, \gamma)
\end{equation*}
is well defined. 
\begin{comment}
  Since $u$ is a right intertwiner along $\beta$ and $v$ is a left intertwiner along $\beta$ (\ie they intertwine along the \emph{same} automorphism), $u\otimes v$ is a well-defined automorphism on the tensor product.
  It has moreover the right intertwining properties because
  \begin{align*}
  (u \otimes v) (a \lact (m \otimes n) \ract c) 
  &= (u \otimes v) ((a \lact m) \otimes (n \ract c)) \\
  &= u(a \lact m) \otimes v(n \ract c) \\
  &= a \lact_\alpha u(m) \otimes v(n) \ract_\gamma c
  \end{align*}
\end{comment}
\,
\\
Passing to identity components, consider the diagram
\begin{align} 
\label{DiagramInheritImplementing}
\xymatrix@C=1em@R=4em{
  & & (I(M) \times_{\Aut(B)} I(N))_0 \ar[d] \ar[dl] \ar[dr]& & \\
  & I(M)_0 \ar[dl] \ar[dr]& I(M \otimes_B N)_0 \ar[dll] \ar[drr]& I(N)_0 \ar[dl] \ar[dr]& \\
  \Aut(A)_0 & & \Aut(B)_0 & & \Aut(C)_0
}
\end{align}
of Lie groups.
The diamond in the middle commutes as it is the induced diagram of identity components associated to the pullback diagram defining $I(M) \times_{\Aut(B)} I(N)$,  while the commutativity of the two quadrangular diagrams  is easy to check.
\begin{comment}
For the left one, we have
\begin{equation*}
\begin{tikzcd}
 & & (\alpha, u, \beta, v, \gamma) \ar[d, mapsto] \ar[dl, mapsto] \\
 & (\alpha, u, \beta) \ar[dl, mapsto] & (\alpha, u \otimes v, \gamma) \ar[dll, mapsto]\\
 \alpha & &
\end{tikzcd}
\end{equation*}
The right one is similar.
\end{comment}
Since $M$ and $N$ are implementing, all maps in the bottom row are surjective (see \cref{RemarkIdentityComponentsImplementing}).
\\
We claim that then also the projection $(I(M) \times_{\Aut(B)} I(N))_0 \to I(M)_0$ is surjective.
To show this, let $(\alpha, u, \beta) \in I(M)_0$, hence $\beta \in \Aut(B)_0$.
Since $N$ is implementing, there exists $(\beta, v, \gamma) \in I(N)_0$.
However, it is not clear that the element $(\alpha, u, \beta, v, \gamma) \in I(M)_0 \times_{\Aut(B)_0} I(N)_0$ is contained in the (generally smaller) set $(I(M) \times_{\Aut(B)} I(N))_0 \to I(M)_0$, so we have to be more precise in the choice of lift.
To this end, let $(\alpha_s, u_s, \beta_s)_{s \in [0, 1]}$, be a continuous path in $I(M)_0$ with $(\alpha_0, u_0, \beta_0) = (e, e, e)$ and $(\alpha_1, u_1, \beta_1) = (\alpha, u, \beta)$, the identity element of $I(M)$.
Then $(\beta_s)_{s \in [0, 1]}$ is a continuous path in $\Aut(B)_0$ connecting $\beta$ to the identity.
Since $N$ is implementing, $p_\ell : I(N)_0 \to \Aut(B)_0$ is a surjective homomorphism of Lie groups, hence a fiber bundle (see e.g., \cite[Thm.~E.3]{HallLieGroups}). 
Therefore, we can choose a lift $(\beta_s, v_s, \gamma_s)_{s \in [0, 1]}$ of the path $(\beta_s)_{s \in [0, 1]}$ to $I(N)_0$ starting at the identity.
Then by construction, $(\alpha, u, \beta, v_1, \gamma_1) \in (I(M) \times_{\Aut(B)} I(N))_0$ is a lift of $(\alpha, u, \beta) \in I(M)_0$.
This finishes the proof of the claim.
%Similarly, one shows that the projection $(I(M) \times_{\Aut(B)} I(N))_0 \to I(N)_0$ is surjective.
\\
Now, the composite map $(I(M) \times_{\Aut(B)} I(N))_0 \to I(M)_0 \to \Aut(A)_0$ is the composition of two surjective maps, hence surjective, and by commutativity of the diagram \eqref{DiagramInheritImplementing}, it factors through $p_\ell : I(M\otimes_B N)_0 \to \Aut(A)_0$.
Hence $p_\ell$ must be surjective.
\end{proof}

Due to \cref{PropositionTensorProdImplementing} we are  in position to define a sub-bicategory 
\begin{equation*}
\sAlgbiimp_k\subset \sAlgbi_k
\end{equation*}
 with the same objects (super algebras) but only the \textit{implementing} bimodules as 1-morphisms (and all intertwiners between those as 2-morphisms). We call $\sAlgbiimp_k$ the bicategory of \emph{implemented super algebras}. By \cref{PropositionInvertibleImplementing}, both bicategories have the same set of isomorphism classes of objects. In other words,  Morita equivalence is the same as isomorphism in any of these bicategories. One of the fundamental  insights of this paper is that in the context of  \textit{bundles}, it is the smaller bicategory $\sAlgbiimp_k$ which is relevant.

\subsection{Framing and symmetric monoidal structures}

\label{sec:smsonimplementedalgebras}

 We note that the framing 
$\sAlg_k \to \sAlgbi_k$
of \cref{lem:framingAlg} does not co-restrict to the sub-bicategory $\sAlgbiimp_k$, because the bimodule $B_{\varphi}$ is not necessarily implementing for all algebra homomorphisms $\varphi:A \to B$, see \cref{ExampleTwistedModuleNotImplementing}. Likewise,  the symmetric monoidal structure on $\sAlgbi_k$ does not restrict to the sub-bicategory $\sAlgbiimp_k$,  as the exterior tensor product of implementing bimodules is not necessarily implementing. For example, the bimodule from \cref{ExampleImplementing3} is a non-implementing module which is the exterior product of two implementing bimodules, compare \cref{ExampleImplementing2}.

Both problems can be solved simultaneously in two different ways by looking at two smaller sub-{}bi\-categories. \begin{enumerate}[(1)]
\item 
We restrict to the underlying sub-bigroupoid $\sAlgbigrpd k \subset \sAlgbi_k$ with only the invertible bimodules and invertible intertwiners. By \cref{PropositionInvertibleImplementing}, we have
\begin{equation*}
\sAlgbigrpd k \subset \sAlgbiimp_k\text{.}
\end{equation*}
Since the exterior tensor product of invertible bimodules is again invertible,  the symmetric monoidal structure restricts from $\sAlgbi_k$ to $\sAlgbigrpd{k}$.

For the framing, we may then restrict to the groupoid $\sAlggrpd k$ of super algebras and super algebra \textit{iso}morphisms. Then, by \cref{ExampleImplementing1}, we obtain a new framing
\begin{equation}
\label{eq:framingimp}
\sAlggrpd k \to \sAlgbigrpd k\text{,} 
\end{equation} 
which is proved like in \cref{lem:framingAlg}, combined with the fact that $\phi$ is invertible.

\item
We restrict to the full sub-bicategory 
$\sssAlgbi_k \subset \sAlgbi_k$ over all semisimple super algebras. By \cref{co:ssimp} we have 
\begin{equation*}
\sssAlgbi_k \subset \sAlgbiimp_k\text{.}
\end{equation*}

For the framing, it is clear again by \cref{co:ssimp} that the framing restricts to a framing
\begin{equation*}
\sssAlg_k \to \sssAlgbi_k\text{.}
\end{equation*}

Moreover, as the tensor product of semisimple super algebras is again semisimple, the symmetric monoidal structure restricts from $\sAlgbi_k$ to $\sssAlgbi_k$. 

\end{enumerate}

\begin{proposition}
The following table describes the dualizable, fully dualizable, and invertible objects in all three symmetric monoidal bicategories of super algebras: 
\begin{center}
\begin{tabular}{l|ccc}
 & dualizable & fully dualizable & invertible \\\hline
$\sAlgbi_k$ & all & semisimple & central simple \\
$\sAlgbigrpd k$ & central simple & central simple & central simple \\
$\sssAlgbi_k$ & all & all & central simple \\
\end{tabular}
\end{center}
\end{proposition}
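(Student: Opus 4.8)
The plan is to treat the three rows separately, reducing the $\sAlgbigrpd k$-row and the $\sssAlgbi_k$-row to the $\sAlgbi_k$-row, which is a restatement of facts already recorded: every super algebra is dualizable, and fully dualizable precisely when semisimple (\cref{re:dualizable}), while the invertible objects of $\sAlgbi_k$ are the central simple super algebras (\cref{TheoremCentralSimpleAlgebras}).

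For the row of $\sssAlgbi_k$ I would argue purely formally. Since $\sssAlgbi_k$ is a \emph{full} sub-bicategory of $\sAlgbi_k$, is closed under $\otimes$, contains $\unit = k$, and is closed under passing to opposite algebras (the opposite of a semisimple super algebra is semisimple), all data witnessing full dualizability of a semisimple $A$ in $\sAlgbi_k$ — the dual $A^{\opp}$, the evaluation and coevaluation bimodules (with underlying space $A$, cf.\ \cref{re:dualizable}), their adjoints, and every coherence intertwiner — already lives in $\sssAlgbi_k$. As the objects of $\sssAlgbi_k$ are exactly the semisimple super algebras, \cref{re:dualizable} then gives that every object of $\sssAlgbi_k$ is fully dualizable, hence dualizable, in $\sssAlgbi_k$. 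For invertibility I would note that any inverse of an invertible object is itself invertible, hence central simple by \cref{TheoremCentralSimpleAlgebras}, hence semisimple; so the invertible objects of $\sssAlgbi_k$ are precisely those of $\sAlgbi_k$ that happen to be semisimple, which — central simple algebras being semisimple — are exactly the central simple ones.

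For the row of $\sAlgbigrpd k$ the key point is that every $1$-morphism there is an invertible bimodule, i.e.\ an equivalence, and an equivalence in a bicategory is an adjoint equivalence, hence admits both a left and a right adjoint; consequently every dualizable object of $\sAlgbigrpd k$ is automatically fully dualizable, which collapses the first two columns. To identify the dualizable objects: if $A$ is central simple then so is $A^{\opp}$, and the classical evaluation and coevaluation bimodules $A \otimes A^{\opp} \to k$, $k \to A^{\opp} \otimes A$ are \emph{invertible} — this invertibility for central simple $A$ is exactly what \cref{TheoremCentralSimpleAlgebras} records — so, the triangle identities holding in $\sAlgbi_k$ and involving only invertible intertwiners, $A$ is dualizable already inside $\sAlgbigrpd k$. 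Conversely, if $A$ is dualizable in $\sAlgbigrpd k$ with dual $A^{\vee}$, the evaluation and coevaluation are invertible bimodules, so in the commutative monoid $\hc{0}(\sAlgbi_k)$ of Morita equivalence classes of super algebras under $\otimes$ one gets $[A][A^{\vee}] = [k] = [A^{\vee}][A]$, i.e.\ $[A]$ is a unit; by \cref{TheoremCentralSimpleAlgebras} this means $A$ is Morita equivalent to a central simple super algebra, and since the graded center and the lattice of graded two-sided ideals are Morita invariants, central-simplicity is itself Morita invariant, so $A$ is central simple. The invertible objects are handled identically: $A$ is invertible in $\sAlgbigrpd k$ iff $A \otimes B$ is Morita equivalent to $k$ for some super algebra $B$, iff $[A]$ is a unit of $\hc{0}(\sAlgbi_k)$, iff $A$ is central simple.

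All the non-formal inputs are imported — \cref{re:dualizable}, \cref{TheoremCentralSimpleAlgebras}, and the standard Morita invariance of central-simplicity — so the main obstacle is bookkeeping in the $\sAlgbigrpd k$-row: one must be careful that dualizability inside the bigroupoid forces the evaluation and coevaluation to be genuinely invertible bimodules, and that an equivalence $A \otimes A^{\opp} \simeq \unit$ can be upgraded to honest duality data satisfying the triangle identities. I would handle this by citing that equivalences in bicategories are adjoint equivalences rather than reproving it.
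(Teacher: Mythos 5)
Your proof is correct and follows essentially the same route as the paper's: row one is a restatement of \cref{re:dualizable} and \cref{TheoremCentralSimpleAlgebras}, row three reduces to row one because all duality data for a semisimple algebra stays semisimple, and in row two the forced invertibility of the (co)evaluation bimodules makes $[A]$ a unit, hence central simple. Your handling of row two is marginally more careful than the paper's terse argument, since you work with the abstract dual $A^{\vee}$ in the commutative monoid $\hc{0}(\sAlgbi_k)$ rather than tacitly identifying it with $A^{\opp}$, but this is a refinement rather than a different method.
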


\begin{proof}
The first line only repeats \cref{re:dualizable,TheoremCentralSimpleAlgebras}.
In the second line, the restriction to invertible bimodules requires $A \otimes A^{\opp}$ to be Morita equivalent to $k$ in all three cases; hence $A$ must already be invertible and thus central simple. The third line follows from the first.  
\end{proof}

\begin{remark}
All considerations of \cref{SectionImplementingModules} have ungraded counterparts: a sub-bicategory 
\begin{equation*}
\Algbiimp_k\subset \Algbi_k
\end{equation*}
of \emph{implemented algebras}, and two possible options for framed and symmetric monoidal versions: the sub-bicategories $\Algbigrpd k$ and $\ssAlgbi_k$.
\end{remark}

\section{The bicategory of  algebra bundles}       \label{sec:algebrabundles}     

In this section, we introduce a bundle version of the bicategory $\sAlgbiimp_k$ of  algebras defined in \cref{SectionImplementingModules}. We also discuss in detail why a similar bundle version of the bicategory $\sAlgbi_k$ of algebra bundles of \cref{sec:bicatalgebras} does not exist. 

\subsection{Algebra bundles and bimodule bundles}

Let $X$ be a smooth manifold. 

\begin{definition}[Super algebra bundle]
\label{def:algebrabundle}
A \emph{super algebra bundle over} $X$ is a smooth vector bundle $\pi:\mathcal{A} \to X$, with the structure of a super algebra on each fibre $\mathcal{A}_x$, $x \in X$, such that each point in $X$ has an open neighborhood $U \subset X$ for which there exists a super algebra $A$ and a diffeomorphism $\phi: \mathcal{A}|_U \to U \times A$ that preserves fibres and restricts to a super algebra isomorphism $\phi_x: \mathcal{A}_x \to A$ in each fibre over $x\in U$. 
A homomorphism between two super algebra bundles is a vector bundle map that is a grading-preserving algebra homomorphism in each fiber.
Super algebra bundles over $X$ and homomorphisms form a category $\sAlgBdl_k(X)$. \end{definition}

\begin{remark}
Super algebra bundles are \emph{not} the same as monoid objects in the category of super vector bundles (the same statement holds in the non-super case).
The problem is that a monoid object may fail to be locally trivializable as an \emph{algebra} bundle.
An example of a monoid object which is not locally trivializable can be found, \eg in \cite[Example 1.2.4]{Mertsch2020}.
\end{remark}
 
\begin{remark}
\label{re:typicalfibre}
In \cref{def:algebrabundle}, we allow vector bundles of varying rank over different connected components. Thus, super algebra bundles  do not necessarily have a single typical fibre, in the sense that the super algebras $A$ of all local trivializations could be chosen to be the same. However, a straightforward argument shows that the restriction of a super algebra bundle to a connected component of $X$ does have a single typical fibre. 
\begin{comment}
A \emph{trivial super algebra bundle}  $\mathcal{A}$ over  $X$ is given by a family $\{A_i\}_{i \in I}$ of super algebras, one for each connected component $X_i\subset X$, via $\mathcal{A}|_{X_i} := \underline{A}_i := X_i \times A_i$, the trivial algebra bundle with fiber $A_i$. 
Note that trivial super algebra bundles canonically pull back to trivial ones. 
\end{comment}
\end{remark} 

\begin{definition}[Bimodule bundle]
\label{def:bimodulebundle}
Let $\mathcal{A}$ and $\mathcal{B}$ be super algebra bundles over $X$ and let $\mathcal{M}$ be a super vector bundle over $X$ with the structure of an $\mathcal{A}_x$-$\mathcal{B}_x$-bimodule  in each fiber $\mathcal{M}_x$.
\begin{enumerate}[(i)]
\item
        A \emph{local trivialization} for $\mathcal{M}$ is an open set $U \subset X$ together with a tuple $(A, M, B; \phi, u, \psi)$ of super algebras $A$ and $B$, an $A$-$B$-bimodule $M$ and local trivializations $\phi: \mathcal{A}|_U \to U \times A$, $\psi:\mathcal{B}|_U \to U \times B$ (as super algebra bundles) and $u: \mathcal{M}|_U \to U \times M$ (as a vector bundle) such that fiberwise, $u$ is an intertwiner along $\phi$ and $\psi$.

\item 
$\mathcal{M}$ is an $\mathcal{A}$-$\mathcal{B}$-\emph{bimodule bundle} if each point $x \in X$ has an open neighborhood $U$ over which there exists a local trivialization for $\mathcal{M}$.
\end{enumerate}
Morphisms between $\mathcal{A}$-$\mathcal{B}$-bimodule bundles are super vector bundle morphisms that are even intertwiners in each fibre (they will again be called intertwiners).
$\mathcal{A}$-$\mathcal{B}$-bimodule bundles and their morphisms form a category we denote by $\sBimodBdl_{\mathcal{A},\mathcal{B}}(X)$. 
\end{definition}

\begin{example} \label{ExampleTwistedModuleIso}
Let $\varphi:\mathcal{A} \to \mathcal{B}$ be an isomorphism of super algebra bundles over $X$. Then,  there is a $\mathcal{B}$-$\mathcal{A}$-bimodule bundle $\mathcal{B}_\varphi$, which over a point $x \in X$ has fibers $(\mathcal{B}_x)_{\varphi_x}$.
A local trivialization of $\mathcal{B}_{\varphi}$ is given by $(B, B_{\varphi_0}, A; \psi, \psi, \varphi_0^{-1}\circ \psi \circ \varphi)$, where $B$ and $A$ are typical fibres of $\mathcal{B}$ and $\mathcal{A}$, respectively,  $\psi$ is a local trivialization of $\mathcal{B}$, and $\varphi_0: A \to B$ is an arbitrary fixed super algebra isomorphism. 
\begin{comment}
We check:
\begin{equation*}
\psi(c\lact b \ract a) = \psi (cb\varphi_x(a))=\psi(c)\psi(b)\psi(\varphi_x(a))=\psi(c) \lact \psi(b)\ract \varphi_0^{-1}(\psi(\varphi_x(a))) 
\end{equation*}
\end{comment} 
\end{example}

The example above does \emph{not} generalize to non-invertible bundle homomorphisms $\varphi:  \mathcal{A} \to \mathcal{B}$.
For a detailed discussion of this, see \cref{ex:loctrivfail1,ex:loctrivfail2} in  \cref{SectionFraming}.

\begin{remark} \label{RemarkNoTypicalFiber}
An alternative, non-equivalent  notion of an $\mathcal{A}$-$\mathcal{B}$-bimodule bundle is to define it as a super vector bundle $\mathcal{M}$ together with a morphism of algebra bundles $\mathcal{A}\otimes\mathcal{B}^{\opp} \to \End(\mathcal{M})$.
This notion is strictly more general than our \cref{def:bimodulebundle}.
Indeed, the collection $({A}_{\varphi(x)})_{x \in X}$ of $A$-$B$-bimodules, for a smooth map $\varphi: X \to \Hom(B, A)$, is certainly an $\underline{A}$-$\underline{B}$-bimodule bundle in this alternative sense, but -- as just said --  it is not locally trivial in the sense of our \cref{def:bimodulebundle}.
\\
We show, however, in \cref{prop:weakernotioncoincides} that this alternative notion of bimodule bundles is equivalent to \cref{def:bimodulebundle} when the super algebra bundles $\mathcal{A}$ and $\mathcal{B}$ have semisimple fibres.
\\
Anyway, this alternative notion of bimodule bundles does not, in general, admit relative tensor products, as can be seen from \cref{ExNoTypicalFiber}; thus it cannot be used for our purposes.
\end{remark}

The transition functions between a pair of local trivializations $(A, M, B; \phi, u, \psi)$ and $(A^\prime, M^\prime, B^\prime; \phi^\prime, u^\prime, \psi^\prime)$, defined over $U$ and $U^\prime$, are  smooth maps  
\begin{equation*}
U \cap U^\prime \to \mathrm{Hom}_{\Impcat_k}((A, M, B), (A^\prime, M^\prime, B^\prime))\text{.}
\end{equation*}
Even in the case that $A = A^\prime$, $B = B^\prime$, $M = M^\prime$ they are in general not bimodule automorphisms of $M$; instead, they lie in the larger group of implementers (see \cref{re:enlargement}).
In particular, the bimodule $M$ appearing in local trivializations is not unique up to bimodule automorphisms, but rather unique up to isomorphism in the category $\Impcat_k$. For example, one can always replace $M$ by $\pss{\phi} M_\psi$, for any automorphisms $\phi$ and $\psi$ (\cref{re:twistedbimoduleimplementers}), whereas $M \cong \pss{\phi} M_\psi$ as $A$-$B$-bimodules only if $\phi$ and $\psi$ are inner (\cref{lem:framing:d}).
With these considerations in mind, we come to the following definition.

\begin{definition}[Typical fiber of module bundles]
Let $\mathcal{A}$, $\mathcal{B}$ be super algebra bundles over $X$  and let $\mathcal{M}$ be an $\mathcal{A}$-$\mathcal{B}$-bimodule bundle.
A \emph{typical fibre} for $\mathcal{M}$ is an object  $(A,M,B)$ in $\Impcat_k$, such  that around every point of $X$ there exist local trivializations of the form $(A, M, B;\phi,u,\psi)$. 
\end{definition}

Similar to \cref{re:typicalfibre}, one can show that every bimodule bundle has a typical fiber over each connected component of $X$, and, as said before, that typical fibre is unique up to isomorphism in $\Impcat_k$.

\begin{remark}
\label{rem:fibreimplementing}
By definition of a local trivialization, \cref{def:bimodulebundle}, $(A, M, B)$ is a typical fiber of an $\mathcal{A}$-$\mathcal{B}$-bimodule bundle if and only if $(\mathcal{A}_x,\mathcal{M}_x,\mathcal{B}_x) \cong (A,M,B)$ in $\Impcat_k$ for each $x \in X$.
\end{remark}

\subsection{Implementing bimodule bundles}

\label{sec:implementingbimodulebundles}

The following is one of the central definitions of this article.

\begin{definition}[Implementing bimodule bundle]
\label{def:implementingbimodulebundle}
Let $\mathcal{A}$ and $\mathcal{B}$ be super algebra bundles over $X$ and let $\mathcal{M}$. An $\mathcal{A}$-$\mathcal{B}$-bimodule bundle $\mathcal{M}$ is called \textit{(left/right) implementing} if all
fibres $\mathcal{M}_x$ are (left/right) implementing. 
\end{definition}

\begin{remark}
By \cref{RemarkTwistedModulesImplementing,rem:fibreimplementing}, a bimodule bundle over $X$ is (left/right) implementing if and only if its typical fiber over each connected component of $X$ is (left/right) implementing.
\end{remark}

Let $\mathcal{A}$, $\mathcal{B}$, $\mathcal{C}$ be super algebra bundles, let $\mathcal{M}$ be an $\mathcal{A}$-$\mathcal{B}$-bimodule bundle and $\mathcal{N}$ a $\mathcal{B}$-$\mathcal{C}$-bimodule bundle. 
Let $\mathcal{M} \otimes_{\mathcal{B}} \mathcal{N}$ be the fiberwise relative tensor product, which is in the first place a collection of bimodules $\mathcal{M}_x \otimes_{\mathcal{B}_x} \mathcal{N}_x$, indexed by $x \in X$.
As announced, it is not always possible to endow $\cup_{x \in X}\mc{M}_{x} \otimes_{\mc{B}_{x}} \mc{N}_{x} = \mathcal{M} \otimes_{\mathcal{B}} \mathcal{N}$ with the structure of a vector bundle (see \cref{ExNoTypicalFiber} below), but we have the following positive result.

\begin{proposition} \label{PropositionDefinitionTensorProduct}
Suppose that either $\mathcal{M}$ is right implementing or that $\mathcal{N}$ is left implementing.
Then, $\mc{M} \otimes_{\mc{B}} \mc{N}$ has a unique structure of vector bundle over $X$, such that the canonical map $\mc{M} \otimes \mc{N} \rightarrow \mc{M} \otimes_{\mc{B}} \mc{N}$ is smooth.
Moreover, when each of the fibers $\mc{M}_{x} \otimes_{\mc{B}_{x}} \mc{N}_{x}$ is equipped with the obvious $\mc{A}_{x}$-$\mc{C}_{x}$-bimodule structure, $\mc{M} \otimes_{\mc{B}} \mc{N}$ is an $\mc{A}$-$\mc{C}$-bimodule bundle.
Finally, if $\mathcal{M}$ has typical fibre $(A,M,B)$ and $\mathcal{N}$ has typical fibre $(B,N,C)$, then there exists an automorphism $\phi$ of $B$ such that $\mathcal{M} \otimes_{\mathcal{B}} \mathcal{N}$ has typical fibre $(A,M \otimes_B \pss{\phi}N,C)$.
\end{proposition}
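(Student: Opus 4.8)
The plan is to build explicit local trivialisations of $\mc{M}\otimes_{\mc{B}}\mc{N}$ and to read off every assertion from them; I treat the case that $\mc{N}$ is left implementing, the case that $\mc{M}$ is right implementing being symmetric (lift through $p_r\colon I(M)\to\Aut(B)$ instead, or dualise using \cref{PropAdjointImplementing}). First I would fix $x_0\in X$ and a connected, contractible neighbourhood $U$, and -- using that $\mc{B}|_U$ has a single typical super algebra $B$ -- choose local trivialisations $(A,M,B;\phi,u,\psi)$ of $\mc{M}$ and $(B,N,C;\psi',v,\chi)$ of $\mc{N}$ over $U$ that involve the \emph{same} $B$. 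After pre-composing the second one with a fixed algebra isomorphism (as in \cref{ExampleTwistedModuleIso}) I may further arrange that $\psi$ and $\psi'$ agree over $x_0$, so that $g\colon U\to\Aut(B)$, $g_x:=\psi'_x\circ\psi_x^{-1}$, is smooth with $g_{x_0}=\mathrm{id}$ and hence has image in $\Aut(B)_0$. Applying $u_x\otimes v_x$ fibrewise identifies $\mc{M}_x\otimes_{\mc{B}_x}\mc{N}_x\cong\pss{\phi_x}(M\otimes_B\pss{g_x}N)_{\chi_x}$; the point is that the fibre $M\otimes_B\pss{g_x}N$ varies with $x$, which is precisely the pathology that must be repaired.

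Next I would repair it. Because $\mc{N}$ is left implementing, $p_\ell\colon I(N)_0\to\Aut(B)_0$ is a surjective homomorphism of Lie groups, hence a fibre bundle \cite[Thm.~E.3]{HallLieGroups}; since $U$ is contractible I can lift the smooth map $g^{-1}\colon U\to\Aut(B)_0$ to a smooth map $x\mapsto(g_x^{-1},w_x,\kappa_x)\in I(N)_0$, so that $w_x\in\GL(N)$ and $\kappa_x\in\Aut(C)$ depend smoothly on $x$ and satisfy $w_x(\beta\lact n\ract\gamma)=g_x^{-1}(\beta)\lact w_x(n)\ract\kappa_x(\gamma)$. I then claim the linear map $F_x\colon\mc{M}_x\otimes_{\mc{B}_x}\mc{N}_x\to M\otimes_B N$, $m\otimes n\mapsto u_x(m)\otimes w_x(v_x(n))$, is well defined (using $g_x^{-1}(\psi'_x(b))=\psi_x(b)$ and the implementing identity for $w_x$), bijective, and an intertwiner along $\phi_x$ and $\kappa_x\circ\chi_x$ -- i.e.\ it is an isomorphism $\mc{M}_x\otimes_{\mc{B}_x}\mc{N}_x\cong\pss{\phi_x}(M\otimes_B N)_{\kappa_x\circ\chi_x}$ of $\mc{A}_x$-$\mc{C}_x$-bimodules (all maps in sight are even, so no graded signs intervene). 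Declaring each $(x,F_x)$ a chart then equips $\mc{M}\otimes_{\mc{B}}\mc{N}$ with a vector bundle structure: the transition functions are obtained by applying the relative tensor product to the (smooth) transition functions of $\mc{M}$ and $\mc{N}$, conjugated by the smooth maps $w$, hence are smooth. In these charts the canonical map $\mc{M}\otimes\mc{N}\to\mc{M}\otimes_{\mc{B}}\mc{N}$ becomes the constant-in-$x$ fibrewise quotient $U\times(M\otimes N)\to U\times(M\otimes_B N)$, a surjective submersion; this makes it smooth and, since a surjective submersion detects smoothness of functions on its target, also forces uniqueness of the bundle structure. Finally, by Step~2 the datum $(A,M\otimes_B N,C;\phi,F,\kappa\circ\chi)$ is a local trivialisation in the sense of \cref{def:bimodulebundle}, so $\mc{M}\otimes_{\mc{B}}\mc{N}$ with the fibrewise bimodule structure is an $\mc{A}$-$\mc{C}$-bimodule bundle.

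For the typical-fibre statement I would invoke that every bimodule bundle has a typical fibre over each connected component (as in \cref{re:typicalfibre}), which by \cref{rem:fibreimplementing} is just the isomorphism class in $\Impcat_k$ of an arbitrary fibre. Choosing trivialisations of $\mc{M}$ and $\mc{N}$ over a connected open around a point $x_1$ and applying $u_{x_1}\otimes v_{x_1}$ gives $\mc{M}_{x_1}\otimes_{\mc{B}_{x_1}}\mc{N}_{x_1}\cong\pss{\phi_{x_1}}(M\otimes_B\pss{\phi}N)_{\chi_{x_1}}$ with $\phi:=\psi'_{x_1}\circ\psi_{x_1}^{-1}\in\Aut(B)$, and the isomorphism $(\phi_{x_1},\mathrm{id},\chi_{x_1})$ of \cref{re:twistedbimoduleimplementers} identifies this with $(A,M\otimes_B\pss{\phi}N,C)$; this is the claimed typical fibre. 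The main obstacle is the repair step: the naive candidate $\mathrm{id}_M\otimes(\,\cdot\,)$ is \emph{not} well defined on the fibrewise relative tensor product, and one genuinely needs an implementer $w_x$ of $g_x^{-1}$ on $N$ -- which exists, and exists smoothly in $x$ over a contractible base, exactly because $\mc{N}$ is left implementing -- together with the careful bookkeeping of the twist $\kappa_x$ it forces on the $\mc{C}$-side, which is then absorbed into the trivialisation of $\mc{C}$.
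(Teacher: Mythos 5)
Your proof is correct and follows essentially the same route as the paper's: both repair the fibrewise tensor product by lifting the discrepancy between the two trivialisations of $\mathcal{B}$ through the surjective submersion $p_\ell\colon I(N)_0\to\Aut(B)_0$ to obtain a local trivialisation with constant typical fibre, and both obtain uniqueness of the bundle structure from the fibrewise-linear submersion $\mathcal{M}\otimes\mathcal{N}\to\mathcal{M}\otimes_{\mathcal{B}}\mathcal{N}$. The differences (normalising $\psi'=\psi$ at $x_0$ rather than factoring out a constant twist $\varphi_0$, lifting $g^{-1}$ rather than $g$, and using contractibility rather than shrinking $U$) are cosmetic.
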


\begin{proof}
Let $U \subset X$ be an open set and let $(A, M, B; \phi, u, \psi)$ and $(B, N, C; \tilde{\psi}, v, \tau)$ be  local trivializations for $\mathcal{M}$, respectively $\mathcal{N}$, defined over $U$.
The point here is that we may assume that the typical fiber $B$ of $\mathcal{B}$ is the same in both local trivializations, but we can \emph{not} generally assume that $\psi = \tilde{\psi}$.
We want to construct a local trivialization of $\mathcal{M} \otimes_{\mathcal{B}} \mathcal{N}$.
Suppose that $\mathcal{N}$ is left implementing and set $\varphi = \tilde{\psi} \circ \psi^{-1}$, which we view as a smooth function $U \to \Aut(B)$.
We claim that for each $x \in U$, we get an intertwiner 
\begin{equation*}
u_x \otimes v_x : \mathcal{M}_x \otimes_{\mathcal{B}_x} \mathcal{N}_x \to M \otimes_B \pss{\varphi_x} N
\end{equation*}
along $\phi$ and $\tau$.
To show that this is well defined, we have to show that elements of the form $(m \ract b) \otimes n - m \otimes (b \ract n)$, $m \in \mathcal{M}_x$, $n \in \mathcal{N}_x$, $b \in \mathcal{B}_x$, are sent to zero.
Here we calculate
\begin{align*}
  (u_x \otimes v_x)((m \ract b) \otimes n - m \otimes (b \ract n)) 
  &= u_x(m \ract b) \otimes v_x(n) - u_x(m) \otimes v_x(b \ract n) \\
  &= (u_x(m) \ract \psi_x(b)) \otimes v_x(n) - u_x(m) \otimes (\tilde{\psi}_x(b) \lact v_x(n) )\\
  &= u_x(m) \otimes (\psi_x(b) \lact_{\varphi_x} v_x(n) - u_x(m) \otimes (\tilde{\psi}_x(b) \lact v_x(n) )\\
  &= u_x(m) \otimes ((\varphi_x(\psi_x(b))  - \tilde{\psi}_x(b)) \lact v_x(n) ),
\end{align*}
which is zero by the definition of $\varphi$.
\\
Let $\varphi_0 \in \Aut(B)$ such that $\tilde{\varphi} := \varphi_0^{-1} \circ \varphi$ takes values in $\Aut(B)_0$ (where $\varphi_0$ is identified with a constant function $U \to \Aut(B)$).
Since $N$ is left implementing,  $I(N)_0 \to \Aut(B)_0$ is a surjective submersion.
Therefore (after possibly shrinking $U$), $\tilde{\varphi}$ has a lift $\hat{\varphi}: U \to I(N)_0$ along $p_\ell$.
This means that $\hat{\varphi}_x$ is a left implementer for $\tilde{\varphi}_x$, \ie $\hat{\varphi}_x(a \lact n) = \tilde{\varphi}_{x}(a) \lact \hat{\varphi}_x(n)$.
Set $\sigma := p_r \circ \hat{\varphi}: U \to \Aut(C)_0$.
We now let $w$ be the local trivialization of $\mathcal{M} \otimes_{\mathcal{B}} \mathcal{N}$ over $U$ defined fibrewise by
\begin{equation} \label{LeftTrivialization}
\xymatrix@C=1.2cm{
w_x : \mathcal{M}_x \otimes_{\mathcal{B}_x} \mathcal{N}_x \ar[r]^-{u_x \otimes v_x} &  M \otimes_B \pss{\varphi_x} N \ar[r]^{1 \otimes \hat{\varphi}_x^{-1}} & M \otimes_B \pss{\varphi_0} N_{\sigma_x^{-1}}.
}
\end{equation}
It is an intertwiner along $\phi$ and $\sigma^{-1} \circ \tau$.
\\
Let $w^\prime$ be another trivialization, constructed following the procedure above, over an open subset $V \subseteq X$.
The transition function
\begin{equation*}
w^\prime w^{-1} : (U \cap V) \times M \otimes_B \pss{\varphi_0} N \to M^\prime \otimes_B \pss{\varphi_0^\prime} M^\prime
\end{equation*}
is fiberwise a vector space isomorphism,
which is an implementer along $\phi^\prime \circ \phi^{-1}$ and $(\sigma^\prime)^{-1} \circ \tau^\prime \circ \tau^{-1} \circ \sigma$.
To see that it is smooth, observe that the fibers are
\begin{equation*}
  w_x^\prime w_x^{-1} = (1 \otimes (\hat{\varphi}^\prime)_x^{-1})(u^\prime_x \otimes v^\prime_x)(u_x \otimes v_x)^{-1}(1 \otimes \hat{\varphi}_x^{-1})^{-1}
  = u^\prime_x u_x^{-1} \otimes (\hat{\varphi}^\prime)_x^{-1} v_x^\prime v_x^{-1} \hat{\varphi}_x.
\end{equation*}
Since $u$, $u^\prime$ and $v$, $v^\prime$ are trivializations of $M$, respectively $N$, the transition functions $u^\prime u^{-1}$ and $v^\prime v^{-1}$ are smooth.
Since also $\hat{\varphi}^\prime$ and $\hat{\varphi}$ are smooth, the transition function $w^\prime w^{-1}$ is smooth as well.
\\
We have now constructed local trivializations of $\mathcal{M} \otimes_{\mathcal{B}} \mathcal{N}$ over a neighborhood of every point in $X$ and shown that transition functions for any two of these local trivializations are smooth.
This gives $\mathcal{M} \otimes_{\mathcal{B}}\mathcal{N}$ the structure of a vector bundle.
One easily checks that the map $\mathcal{M} \otimes \mathcal{N} \to \mathcal{M} \otimes_{\mathcal{B}}\mathcal{N}$ is smooth and surjective.
It is a submersion as it is fiberwise linear.
\begin{comment}
        Indeed:
        Let $x \in X$ be arbitrary, and let $p \in (\mc{M} \otimes \mc{N})_{x}$.
        We then have  identifications   $T_{p}(\mc{M} \otimes \mc{N}) \cong T_{x}X \oplus (\mc{M} \otimes \mc{N})_{x}$, and $T_{\pi(p)} \mc{E} \cong T_{x}X \oplus \mc{E}_{x}$.
        Observe that $\pi_{x}: (\mc{M} \otimes \mc{N})_{x} \rightarrow \mc{E}_{x}$ is a surjective linear operator.
        With respect to the above decompositions, we have
        \begin{equation*}
                d_{p} \pi = \begin{pmatrix}
                        \mathrm{Id} & 0 \\
                        L & \pi_{x}
                \end{pmatrix}
        \end{equation*}
        for some linear operator $L: T_{x}X \rightarrow E$.
        We thus see that indeed $d_{p}\pi$ is surjective.
\end{comment}
Uniqueness follows from the well-known fact that for a surjective map $\pi: M \to S$ from a manifold $M$ to a set $S$, there exists at most one manifold structure on $S$ turning $\pi$ into a submersion.
%\npk{As a byproduct, this also shows that the vector bundle structure is canonical, \ie independent of the choice of $\varphi_0 \in \Aut(B)$ and the lift $\hat{\varphi}$ of $\varphi_0^{-1} \circ \varphi$ to $I(N)_0$.
%Namely, two such choices give two local trivializations (over the same open set $U$), and we have just shown that their transition function is smooth.}
This finishes the proof in the case that $\mathcal{N}$ is left implementing.
\\
If $\mathcal{M}$ is right implementing, we consider local trivializations $w$  defined fibrewise by
\begin{equation} \label{RightTrivialization}
\xymatrix@C=1.2cm{
w_x : \mathcal{M}_x \otimes_{\mathcal{B}_x} \mathcal{N}_x 
\ar[r]^-{u_x \otimes v_x}
 &  M_{\varphi_x^{-1}} \otimes_B N 
 \ar[r]^-{{\hat{\varphi}}_x \otimes 1} 
 & \pss{\sigma} M_{\varphi_0^{-1}} \otimes_B N,
}
\end{equation}
where now $\hat{\varphi} : U \to I(M)_0$ is a lift of $\tilde{\varphi}$ along $p_r$ and $\sigma = p_\ell \circ \hat{\varphi}$.
The proof that transition functions for this set of trivializations is smooth is analogous to the previous case.
\\
Observe finally that if $\mathcal{M}$ is right implementing and $\mathcal{N}$ is left implementing, then we can consider trivializations of both type \eqref{LeftTrivialization} and type \eqref{RightTrivialization}.
    These are compatible, as $M \otimes_B \pss{\varphi_0} N_{\sigma_x^{-1}} \cong \pss{\sigma^\prime_x} M_{\varphi_0^{-1}} \otimes_B N$ via $\hat{\varphi}^\prime_x \otimes \hat{\varphi}_x$.
\end{proof}

Without the assumption that one of $\mathcal{M}$ and $\mathcal{N}$ is implementing, \cref{PropositionDefinitionTensorProduct} is false, as the following example shows.

\begin{example} \label{ExNoTypicalFiber}
Let $\mathcal{A}=\underline{A}$ be the  trivial super algebra bundle  over $X = \mathrm{GL}_2(k)$, with 
\begin{equation*}
A = \Lambda k^2 = \Lambda k \otimes \Lambda k\text{,}
\end{equation*}
the exterior algebra on two generators $\epsilon_1$ and $\epsilon_2$.
Let 
\begin{equation*}
M = \Lambda k \otimes \Lambda^1 k  = \{ \lambda \epsilon_1 + \mu \epsilon_1 \epsilon_2\mid \lambda, \mu \in k\},
\end{equation*}
 an $A$-$A$-bimodule.
Consider the bundle $\mathcal{M} = \underline{M}_\varphi$, where  $\varphi: X \to \Aut(A)$ is such that for $x \in X = \mathrm{GL}_2(k)$, $\varphi_x$ is the automorphism of $\Lambda k^2$ induced by the linear transformation $x$ of $k^2$.
In other words, at $x \in X$, $\mathcal{A}_x=A$ acts on $\mathcal{M}_x$ from the left via the standard action and from the right via $\varphi$.
A global trivialization of $\underline{M}_\varphi$ is $(A, M, A; \mathrm{id}, \mathrm{id}, \varphi)$. 
\\
If now $\mathcal{N} = \underline{M}$ is the trivial bimodule bundle, then the fiberwise tensor product $\mathcal{M} \otimes_{\mathcal{A}} \mathcal{N}$ does not have the structure of a vector bundle, as it has varying fiber dimensions.
Indeed, over $x = \mathrm{id} \in \mathrm{GL}_2(k)$, we have $\dim_k(\mathcal{M}_x \otimes_{\mathcal{A}_x} \mathcal{N}_x) = 2$ (the fiber is isomorphic to $M$ as a module), while when $x$ is the flip $\epsilon_1 \to - \epsilon_2$, $\epsilon_2 \to \epsilon_1$, in $M_{\phi_x} \otimes_A M$, we have
\begin{align*}
  \epsilon_1 \epsilon_2 \otimes m &= - (\epsilon_1 \ract_{\varphi_x} \epsilon_1) \otimes m = -\epsilon_1 \otimes (\epsilon_1 \lact m) = 0, \\
  m \otimes \epsilon_1 \epsilon_2 &= - m \otimes (\epsilon_2 \lact \epsilon_1) = - (m \ract_{\varphi_x} \epsilon_2) \otimes \epsilon_1 = (m \ract \epsilon_1) \otimes \epsilon_1 = 0,
\end{align*}
for any $m \in M$, as $\epsilon_1$ acts trivially on $M$.
Therefore $\dim_k(\mathcal{M}_x \otimes_{\mathcal{A}_x} \mathcal{N}_x) = 1$. 
\\
Indeed, we recall that by \cref{ExampleImplementing3}, $\mathcal{M}$ is neither left nor right implementing.
\end{example}

We also would like to emphasize that in \cref{PropositionDefinitionTensorProduct} the  tensor product bundle $\mathcal{M} \otimes_{\mathcal{B}} \mathcal{N}$ does \emph{not} necessarily have typical fiber $(A, M \otimes_B N, C)$, but only $(A, M \otimes_B \pss{\phi}N, C)$, where $\phi$ is some automorphism of $B$. That these are not isomorphic happens in the following example.

\begin{example}
An explicit example where $M \otimes_B N$ and $M \otimes_B \pss{\phi} N$ have different dimension as $k$-vector spaces, hence $(A,M \otimes_B N,C) \ncong (A, M \otimes_B \pss{\phi} N,C)$ in $\Impcat_k$, is the following.
Let $A = C = k \oplus M_2(k)$, $B = M_2(k) \oplus M_2(k)$.
Let moreover $N = k^2 \oplus M_2(k)$, an $A$-$B$-bimodule and $M = (k^2)^\vee \oplus M_2(k)$, a $B$-$C$-bimodule (here $(k^2)^\vee$ denotes the dual vector space of $k^2$, an $M_2(k)$-$k$-bimodule).
Then the tensor product $M \otimes_B N$ is $k \oplus M_2(k)$, the identity bimodule.
On the other hand, if $\phi$ is the ``flip'' automorphism of $B$ that exchanges the two summands, then $M\otimes_B \pss{\phi}N \cong ((k^2)^\vee \oplus k^2)_{{\sigma}}$, where $A$ acts from left the standard way, while $B$ acts from the right along the flip isomorphism ${\sigma} : k \oplus M_2(k) \to M_2(k) \oplus k$.
Observe that $\dim_k(M \otimes_B N) = 5$, while $\dim_k(M \otimes_B\pss{\phi}N) = 4$.
\end{example}
Together with \cref{PropositionTensorProdImplementingC}, we obtain from \cref{PropositionDefinitionTensorProduct} the following main result. 

\begin{theorem}
\label{prop:tensorproductimplementing}
The relative tensor product of implementing bimodule bundles is an implementing bimodule bundle.
\end{theorem}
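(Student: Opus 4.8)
The plan is to assemble this statement from the two preceding results, \cref{PropositionTensorProdImplementing} and \cref{PropositionDefinitionTensorProduct}, with no new work beyond bookkeeping. Let $\mathcal{M}$ be an implementing $\mathcal{A}$-$\mathcal{B}$-bimodule bundle and $\mathcal{N}$ an implementing $\mathcal{B}$-$\mathcal{C}$-bimodule bundle over a smooth manifold $X$.

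First I would invoke \cref{PropositionDefinitionTensorProduct}: since $\mathcal{M}$ is in particular right implementing (equivalently, $\mathcal{N}$ is in particular left implementing), the fibrewise relative tensor product $\mathcal{M} \otimes_{\mathcal{B}} \mathcal{N}$ carries a canonical structure of vector bundle over $X$ for which the canonical map $\mathcal{M} \otimes \mathcal{N} \to \mathcal{M} \otimes_{\mathcal{B}} \mathcal{N}$ is smooth, and, with the fibrewise $\mathcal{A}_x$-$\mathcal{C}_x$-bimodule structures, it is an $\mathcal{A}$-$\mathcal{C}$-bimodule bundle. This establishes the part of the assertion saying that $\mathcal{M} \otimes_{\mathcal{B}} \mathcal{N}$ is a bimodule bundle at all.

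It then remains to see that this bimodule bundle is implementing, which by \cref{def:implementingbimodulebundle} means checking that every fibre $(\mathcal{M} \otimes_{\mathcal{B}} \mathcal{N})_x = \mathcal{M}_x \otimes_{\mathcal{B}_x} \mathcal{N}_x$ is an implementing bimodule. Fix $x \in X$. Since $\mathcal{M}$ and $\mathcal{N}$ are implementing bimodule bundles, $\mathcal{M}_x$ is an implementing $\mathcal{A}_x$-$\mathcal{B}_x$-bimodule and $\mathcal{N}_x$ is an implementing $\mathcal{B}_x$-$\mathcal{C}_x$-bimodule; applying \cref{PropositionTensorProdImplementingC} fibrewise, $\mathcal{M}_x \otimes_{\mathcal{B}_x} \mathcal{N}_x$ is an implementing $\mathcal{A}_x$-$\mathcal{C}_x$-bimodule. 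As $x$ was arbitrary, all fibres are implementing, so $\mathcal{M} \otimes_{\mathcal{B}} \mathcal{N}$ is implementing. (Alternatively one can argue via typical fibres: \cref{PropositionDefinitionTensorProduct} yields a typical fibre $(A, M \otimes_B \pss{\phi} N, C)$, and $M \otimes_B \pss{\phi} N$ is implementing by \cref{RemarkTwistedModulesImplementing} combined with \cref{PropositionTensorProdImplementingC}, whence the claim follows from the characterisation of implementing bimodule bundles through their typical fibres.)

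The main obstacle is not located in this final assembly but was already disposed of in the two inputs: the genuinely delicate points are, in \cref{PropositionDefinitionTensorProduct}, the construction of compatible smooth local trivializations of the fibrewise tensor product --- which relies on $p_\ell$, resp.\ $p_r$, restricting to a surjective submersion onto identity components and on a path-lifting argument along the fibre bundle $I(N)_0 \to \Aut(B)_0$ --- and, in \cref{PropositionTensorProdImplementing}, the surjectivity on identity components of $p_\ell$ for the tensor product, obtained by lifting paths through the fibre product $I(M) \times_{\Aut(B)} I(N)$. With those two facts in hand, the theorem follows formally.
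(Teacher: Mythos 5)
Your proposal is correct and follows exactly the same route as the paper: the paper deduces \cref{prop:tensorproductimplementing} directly by combining \cref{PropositionDefinitionTensorProduct} (which gives the bundle structure) with \cref{PropositionTensorProdImplementingC} (which gives implementability of each fibre). Your fibrewise verification, and the alternative via typical fibres, are both valid readings of that one-line deduction.
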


For super algebra bundles $\mathcal{A}$, $\mathcal{B}$ over $X$, we denote by
\begin{equation*}
  \sBimodBdlimp_{\mathcal{A},\mathcal{B}}(X) \subset \sBimodBdl_{\mathcal{A},\mathcal{B}}(X)
\end{equation*}
the full subcategory consisting of implementing bimodule bundles.  
Then, the relative tensor product of  \cref{prop:tensorproductimplementing} induces a functor
\begin{equation} \label{Composition}
\sBimodBdlimp_{\mathcal{A},\mathcal{B}}(X) \times \sBimodBdlimp_{\mathcal{B},\mathcal{C}}(X) \to \sBimodBdlimp_{\mathcal{A},\mathcal{C}}(X)\text{.}
\end{equation}   
It is unproblematic to construct associators and unitors for this tensor product functor, turning it into the composition of 1-morphisms in a bicategory whose objects are super algebra bundles.

\begin{definition}[Bicategory of  algebra bundles] \label{DefinitionPrestwoVectBdl}
The bicategory $\sAlgBdlbi_k(X)$ of \emph{ super algebra bundles} over $X$ has objects super algebra bundles over $X$, 1-morphisms implementing bimodule bundles over $X,$ and 2-morphisms even intertwiners.
The composition is given by \eqref{Composition}.
Similarly, we denote by $\AlgBdlbi_k(X)$ the sub-bicategory consisting of ungraded (\ie purely even) algebra bundles and ungraded implementing bimodule bundles.
\end{definition}

We will shortly require the following result about  invertibility of 1-morphisms and 2-morphisms in the bicategory $\sAlgBdlbi_k(X)$.

\begin{lemma}
\label{lem:fibrewiseinvertibility}
Let $\mathcal{A},\mathcal{B}$ be super algebra bundles over $X$,  and let $\mathcal{M},\mathcal{M}'$ be implementing $\mathcal{A}$-$\mathcal{B}$-bimodule bundles over $X$.
\begin{enumerate}[ (a)]
\item
\label{lem:fibrewiseinvertibility:a}
An intertwiner $\mathcal{M} \to \mathcal{M}'$ is invertible if and only if it is fibrewise invertible.

\item
\label{lem:fibrewiseinvertibility:b}
 $\mathcal{M}$ has a left (right) adjoint if and only if  each fibre has a left (right) adjoint.

\item 
\label{lem:fibrewiseinvertibility:c}
 $\mathcal{M}$ is invertible if and only if it is fibrewise invertible.

\end{enumerate}
\end{lemma}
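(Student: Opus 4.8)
The plan is to prove the three parts in order, using part \cref{lem:fibrewiseinvertibility:a} as the basic input for the other two.

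For part \cref{lem:fibrewiseinvertibility:a}, one implication is immediate, since restricting an invertible intertwiner and its inverse to a fibre gives mutually inverse linear maps. For the converse, let $f\colon\mathcal{M}\to\mathcal{M}'$ be an intertwiner that is fibrewise invertible. Each $f_x^{-1}$ is again an intertwiner (the defining identities for $f_x$ immediately yield those for $f_x^{-1}$), so it suffices to see that $x\mapsto f_x^{-1}$ is smooth. This is the standard fact that a morphism of (super) vector bundles covering the identity which is a fibrewise isomorphism is an isomorphism of vector bundles: in a local trivialization $f$ is a smooth map into the open subset of invertible elements of a matrix algebra, on which inversion is smooth. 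Hence $f^{-1}$ is a $2$-morphism of $\sAlgBdlbi_k(X)$ inverse to $f$.

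For part \cref{lem:fibrewiseinvertibility:b}, the ``only if'' direction is formal: an adjunction $\mathcal{M}\dashv\mathcal{M}^{L}$ in $\sAlgBdlbi_k(X)$ restricts fibrewise to an adjunction in $\sAlgbiimp_k$, because the relative tensor product, the identities, and the $2$-morphisms (including unit and counit) are computed fibrewise and the triangle identities hold fibrewise. For the converse, suppose every fibre $\mathcal{M}_x$ admits a left adjoint. The existence of a one-sided adjoint of a bimodule is equivalent to a projectivity property of the underlying one-sided module, which is invariant under isomorphism in $\Impcat_k$; hence over each connected component the typical fibre $(A,M,B)$ has a left adjoint, and we may take it to be the dual bimodule $M^{\vee}=\underline{\Hom}_{k\text{-}B}(M,B)$ of \cref{PropAdjointImplementing}, with its canonical evaluation and coevaluation. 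As in the proof of \cref{PropAdjointImplementing}, an implementer $(\phi,u,\psi)$ of $M$ induces the implementer $(\psi,\widetilde{u},\phi)$ of $M^{\vee}$ with $\widetilde{u}(\xi)=\psi\circ\xi\circ u^{-1}$, giving a smooth group homomorphism $I(M)\to I(M^{\vee})$; more generally this dual construction is (smoothly) functorial on isomorphisms in $\Impcat_k$, so composing the $\Impcat_k$-valued transition cocycle of $\mathcal{M}$ with it produces a smooth cocycle defining a bimodule bundle $\mathcal{M}^{\vee}$ with typical fibre $(B,M^{\vee},A)$. Since $\mathcal{M}$ is implementing, each $\mathcal{M}_x$, hence $M$, is implementing, so $M^{\vee}$ is implementing by \cref{PropAdjointImplementing} and $\mathcal{M}^{\vee}$ is an implementing bimodule bundle. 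Finally, the fibrewise evaluation $\mathcal{M}\otimes_{\mathcal{B}}\mathcal{M}^{\vee}\to\mathcal{A}$ and coevaluation $\mathcal{B}\to\mathcal{M}^{\vee}\otimes_{\mathcal{A}}\mathcal{M}$ become constant maps in the local trivializations above, hence are smooth intertwiners, and the triangle identities hold because they hold in each fibre; this exhibits $\mathcal{M}^{\vee}$ as a left adjoint of $\mathcal{M}$. Right adjoints are treated identically, using the other dual $\underline{\Hom}_{A\text{-}k}(M,A)$.

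For part \cref{lem:fibrewiseinvertibility:c}, if $\mathcal{M}$ is invertible with inverse $\mathcal{N}$, the structure isomorphisms $\mathcal{M}\otimes_{\mathcal{B}}\mathcal{N}\cong\mathcal{A}$ and $\mathcal{N}\otimes_{\mathcal{A}}\mathcal{M}\cong\mathcal{B}$ restrict, by the fibrewise definition of the tensor product and part \cref{lem:fibrewiseinvertibility:a}, to isomorphisms in each fibre, so every $\mathcal{M}_x$ is invertible. Conversely, if every $\mathcal{M}_x$ is invertible, then in particular each fibre has a left adjoint, so by part \cref{lem:fibrewiseinvertibility:b} the bundle $\mathcal{M}$ has a left adjoint $\mathcal{M}^{L}$ whose unit and counit are built fibrewise from the algebraic ones; for an invertible bimodule these are isomorphisms, so the unit and counit of $\mathcal{M}\dashv\mathcal{M}^{L}$ are fibrewise invertible, hence invertible by part \cref{lem:fibrewiseinvertibility:a}, and $\mathcal{M}^{L}$ is a two-sided inverse of $\mathcal{M}$. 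The main obstacle is the ``if'' direction of part \cref{lem:fibrewiseinvertibility:b}: all smoothness has to be fed through the transition cocycle, which forces one to verify that the dual-module construction is smoothly functorial on the group of implementers (this is where \cref{PropAdjointImplementing} and the explicit homomorphism $I(M)\to I(M^{\vee})$ enter), and to confirm that the resulting bundle is still implementing and that the fibrewise unit and counit assemble into smooth intertwiners satisfying the triangle identities.
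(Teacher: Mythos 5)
Your proof is correct and follows essentially the same route as the paper's: part (a) is a standard fact about fibrewise-invertible vector bundle maps, part (b) is handled by taking the concrete dual $\underline{\Hom}_{k\text{-}B}(M,B)$ as the adjoint, bundling it up using \cref{PropAdjointImplementing}, and arguing that the unit and counit are smooth because they become constant in local trivializations, and part (c) reduces to part (b) plus part (a). The only (cosmetic) difference is that you build the dual bundle $\mathcal{M}^\vee$ by transporting the transition cocycle through the duality functor, whereas the paper constructs it intrinsically as the Hom-bundle $\underline{\Hom}_{k\text{-}\mathcal{B}}(\mathcal{M},\mathcal{B})$, and the paper spells out the constancy of the coevaluation as a consequence of uniqueness of adjunctions rather than leaving it implicit as you do; both arguments carry the same mathematical content.
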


\begin{proof}
(a) is clear and only stated for completeness.  
\\
In (b) and (c), only the \quot{if}-parts are non-trivial. 
We may assume that $X$ is connected, otherwise we consider each connected component separately. 
Let $(A, M, B)$ be a typical fibre of $\mathcal{M}$.
We will use that if an $A$-$B$-bimodule $M$ admits a left adjoint $L$, then a concrete model for such an adjoint is the dual $B$-$A$-bimodule $L = \underline{\Hom}_{k\text{-}B}(M, B)$, with evaluation $\varepsilon: L \otimes_A M \to B$ given by $\xi \otimes x \mapsto \xi(x)$.
If $M$ is implementing, then so is $L$, by \cref{PropAdjointImplementing}.
\\
Let $\mathcal{L}$ be the bundle $\underline{\mathrm{Hom}}_{k\text{-}\mathcal{B}}(\mathcal{M},\mathcal{B})$, with fibrewise $\mathcal{B}$-$\mathcal{A}$-bimodule fibrewise bimodule structure given by $(b \lact \xi \ract a)(x) := b\lact \xi(a \lact x)$. 
It follows from the remarks above that if $\mathcal{M}$ is implementing, then so is $\mathcal{L}$.
Let $\tilde{\varepsilon}:  \mathcal{L}  \otimes_{\mathcal{A}} \mathcal{M}\to \mathcal{A}$ be the intertwiner given by $\tilde{\varepsilon}(\xi \otimes x) = \xi(x)$.
Now by the algebraic statement above, if the fibres of $\mathcal{M}$ admit left adjoints, then for each $x \in X$, there exist bimodule maps $\tilde{\eta}_x: \mathcal{A}_x \to \mathcal{M}_x \otimes_{\mathcal{B}_x} \mathcal{L}_x$ such that $\tilde{\varepsilon}_x$ and $\tilde{\eta}_x$ witness $\mathcal{L}_x$ as a left adjoint of $\mathcal{M}_x$.
\\
It remains to show that the fibrewise defined bimodules maps ${\eta}_x$ assemble to a smooth bundle morphism. 
A local trivialization $(A, M, B; \phi, u, \psi)$ of $\mathcal{M}$ over some open set $U \subset X$ induces a local trivialization of $\mathcal{L}$, identifying it fibrewise with $L = \underline{\Hom}_{k\text{-}B}(M, B)$.
These trivializations identify the bundle map $\tilde{\varepsilon}$ with the bimodule map $\varepsilon$ over each point in $U$ and the maps $\tilde{\eta}_x$ yield a family of bimodule maps $\eta_x: A \to M \otimes_B L$, $x \in U$. 
Since each $\tilde{\eta}_x$ is the coevaluation of the adjunction $(\mathcal{L}_x, \varepsilon_x, \eta_x)$, each $\eta_x$ is the coevaluation of the adjunction $(L, \varepsilon, \eta_x)$. 
However, by uniqueness of adjunctions, $\eta_x$ is determined by $L$ and $\varepsilon$, hence $\eta_x$ is independent of $x$, hence constant.
Transferring back via the local trivialization, this shows that $\tilde{\eta}_x$ depends smoothly on $x$.
This finishes the proof of (b). 
\\
For (c), observe that if $\mathcal{M}$ is invertible, then any left adjoint is an inverse.
\end{proof}

\begin{definition}[Morita class]
\label{def:preMoritaclass}
Let $\mathcal{A}$ be a {super algebra} bundle over $X$ and let $A$ be a super algebra. Then, we say that \emph{$\mathcal{A}$ is of Morita class $A$}, if local trivializations $\mathcal{A}|_U \cong U \times A_U$ can be chosen around every point in $X$ such that $A_U$ and $A$ are Morita equivalent.
\end{definition}

The following result shows that the Morita class is an appropriate invariant of super algebra bundles as objects of the bicategory $\sAlgBdlbi_k(X)$.

\begin{lemma}
\label{lem:preMoritaclass}
Let $\mathcal{A}$ be a super algebra bundle over $X$.
\begin{enumerate}[ (a)]
\item
\label{lem:preMoritaclass:a}
If $X$ is connected, then there exists a super algebra $A$ such that $\mathcal{A}$ is of Morita class $A$. In fact, $\mathcal{A}$ is of Morita class $\mathcal{A}_x$, where $\mathcal{A}_x$ is the fibre of $\mathcal{A}$ over any point $x\in X$.

\item 
\label{lem:preMoritaclass:b}
Let $A$ and $B$ be super algebras, and let $\mathcal{A}$ be of Morita class $A$. Then $\mathcal{A}$ is  of Morita class $B$ if and only if $A$ and $B$ are Morita equivalent. 

\item
\label{lem:preMoritaclass:c}
Let $\mathcal{B}$ be another super algebra bundle with $\mathcal{A}\cong \mathcal{B}$ in the bicategory $\sAlgBdlbi_k(X)$, and let $A$ be a super algebra. 
Then $\mathcal{B}$ is of Morita class $A$ if and only if $\mathcal{A}$ is of Morita class $A$.  
\end{enumerate}
\end{lemma}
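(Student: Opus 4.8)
The plan is to reduce all three parts to a single fibrewise reformulation of \cref{def:preMoritaclass}: for a super algebra bundle $\mathcal{C}$ over a manifold $X$ and a super algebra $C$, I claim that $\mathcal{C}$ is of Morita class $C$ if and only if each fibre $\mathcal{C}_x$ is Morita equivalent to $C$. One direction is immediate, since a local trivialization $\mathcal{C}|_U\cong U\times C_U$ with $C_U$ Morita equivalent to $C$ restricts fibrewise to super algebra isomorphisms $\mathcal{C}_x\cong C_U$ for $x\in U$, so every $\mathcal{C}_x$ is Morita equivalent to $C$. For the converse, I would use that $\mathcal{C}$ is locally trivial as an algebra bundle by \cref{def:algebrabundle}: around any $x$ there is \emph{some} trivialization $\mathcal{C}|_U\cong U\times C_U$, and then $C_U\cong\mathcal{C}_x$ is Morita equivalent to $C$ by hypothesis, so that trivialization already witnesses the Morita class.

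Granting this reformulation, part (a) follows from \cref{re:typicalfibre}: for connected $X$ there is a typical fibre $A$ with $\mathcal{A}_x\cong A$ for all $x$, hence every fibre of $\mathcal{A}$ is Morita equivalent to $A$ and also to $\mathcal{A}_{x_0}$ for any fixed $x_0\in X$; by the reformulation $\mathcal{A}$ is then of Morita class $A$ and of Morita class $\mathcal{A}_{x_0}$. For part (b), I would fix a point $x$: the standing hypothesis that $\mathcal{A}$ is of Morita class $A$ gives that $\mathcal{A}_x$ is Morita equivalent to $A$. If moreover $\mathcal{A}$ is of Morita class $B$, the reformulation gives that $\mathcal{A}_x$ is also Morita equivalent to $B$, whence $A$ and $B$ are Morita equivalent by transitivity. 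Conversely, if $A$ and $B$ are Morita equivalent, then every fibre of $\mathcal{A}$ is Morita equivalent to $B$, \ie $\mathcal{A}$ is of Morita class $B$.

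For part (c), an isomorphism $\mathcal{A}\cong\mathcal{B}$ in $\sAlgBdlbi_k(X)$ is witnessed by an invertible 1-morphism, that is, an invertible implementing $\mathcal{A}$-$\mathcal{B}$-bimodule bundle $\mathcal{M}$. By \cref{lem:fibrewiseinvertibility:c}, $\mathcal{M}$ is fibrewise invertible, so each $\mathcal{M}_x$ is an invertible $\mathcal{A}_x$-$\mathcal{B}_x$-bimodule and therefore exhibits $\mathcal{A}_x$ and $\mathcal{B}_x$ as Morita equivalent. Combined with the reformulation, this gives: $\mathcal{A}$ is of Morita class $A$ iff each $\mathcal{A}_x$ is Morita equivalent to $A$ iff each $\mathcal{B}_x$ is Morita equivalent to $A$ iff $\mathcal{B}$ is of Morita class $A$.

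I do not expect a serious obstacle here: once the fibrewise reformulation is in place, the rest is bookkeeping with Morita equivalence classes, drawing only on \cref{re:typicalfibre} in part (a) and \cref{lem:fibrewiseinvertibility:c} in part (c). The one point that needs a little care is the role of connectedness: over a disconnected $X$ the algebras $C_U$ attached to different local trivializations of $\mathcal{C}$ need not be isomorphic, so the argument should be run pointwise throughout rather than through a single typical fibre, except in part (a) where $X$ is assumed connected; for $X=\emptyset$ the statement of part (b) is read as vacuous.
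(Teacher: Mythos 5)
Your proof is correct and follows essentially the same route as the paper's: part~(a) via the existence of a typical fibre over a connected base, part~(b) by unwinding definitions, and part~(c) by fibrewise invertibility of an invertible $\mathcal{A}$-$\mathcal{B}$-bimodule bundle. Your explicit fibrewise reformulation of \cref{def:preMoritaclass} is a slightly more systematic packaging of what the paper does directly in terms of typical fibres of local trivializations, and your remark about $X=\emptyset$ is a reasonable (if pedantic) caveat that the paper does not bother to make.
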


\begin{proof}
(a) As remarked above, any super algebra bundle over a connected manifold even has a typical fibre, and thus, in particular, it has this typical fibre as its Morita class. (b) is trivial. In (c), if $\mathcal{B}$ is of Morita class $A$ and $\mathcal{M}$ is an invertible $\mathcal{A}$-$\mathcal{B}$-bimodule bundle over $X$, then each local trivialization of $\mathcal{M}$ exhibits the  typical fibre  $A_{U}$ of $\mathcal{A}$ as Morita equivalent to the typical fibre $B_U$ of $\mathcal{B}$. By assumption, $B_U$ is also Morita equivalent to $A$; this shows that $A_U$ is also Morita equivalent to $A$ and thus that $\mathcal{A}$ is of Morita class $A$.     
\end{proof}

\begin{remark}
One can regard the Morita class of a super algebra bundle as a generalization of the rank of a vector bundle. This will become important when we set up the theory of 2-vector bundles in \cite{Kristel2020}. 
\end{remark}

\subsection{Framings for  algebra bundles} \label{SectionFraming}

In \cref{sec:smsonimplementedalgebras} we considered two sub-bicategories of $\sAlgbiimp_{k}$ which admit a framing by (the corresponding sub-bicategory of) $\sAlg_{k}$.
In this section, we will explain how to transfer these results to $\sAlgBdlbi_k$.
The first option is to restrict to the bigroupoid
\begin{equation*}
\sAlgBdlbigrpd kX \subset \sAlgBdlbi_k(X)\text{.} 
\end{equation*}
Then, we obtain a functor \begin{equation} \label{FramingForInvertibles}
\sAlgBdlgrpd kX \to \sAlgBdlbigrpd kX
\end{equation}
for each manifold $X$.
Here, $\sAlgBdlgrpd kX$ is the subgroupoid of $\sAlgBdl_k(X)$ with only the super algebra bundle \textit{iso}morphisms. 
To see this, we note that for a super algebra bundle isomorphism $\varphi:\mathcal{A} \to \mathcal{B}$ we have a $\mathcal{B}$-$\mathcal{A}$-bimodule bundle $\mathcal{B}_{\varphi}$ (\cref{ExampleTwistedModuleIso}) and  this bimodule bundle is implementing (\cref{PropositionInvertibleImplementing}).
By \cref{lem:fibrewiseinvertibility:b}, we obtain the following result.

\begin{lemma} \label{PropFraming1}
The functor \eqref{FramingForInvertibles} is a framing for the bigroupoid of  $\sAlgBdlbigrpd kX$ of  super algebra bundles. 
\end{lemma}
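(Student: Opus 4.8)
The plan is to check, for the functor \eqref{FramingForInvertibles}, the three requirements in the definition of a framing. Two of them are immediate: $\sAlgBdlgrpd kX$ and $\sAlgBdlbigrpd kX$ have the same class of objects (super algebra bundles over $X$), and the functor is the identity on objects by construction. So the real work is (i) verifying that \eqref{FramingForInvertibles} is a well-defined functor that actually lands in $\sAlgBdlbigrpd kX$, and (ii) producing a right adjoint in $\sAlgBdlbigrpd kX$ for the image of each morphism.

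For (i) I would send an isomorphism of super algebra bundles $\varphi\colon\mathcal{A}\to\mathcal{B}$ to the twisted bimodule bundle $\mathcal{B}_\varphi$ of \cref{ExampleTwistedModuleIso}, and then argue that $\mathcal{B}_\varphi$ is an \emph{invertible} $\mathcal{B}$-$\mathcal{A}$-bimodule bundle (hence also implementing, by \cref{PropositionInvertibleImplementing}), so that it is a $1$-morphism of $\sAlgBdlbigrpd kX$. This is checked fibrewise: since $\varphi_x$ is an algebra isomorphism, the fibre $(\mathcal{B}_x)_{\varphi_x}$ is invertible by \cref{LammaIdentityBimoduleInvertible}, and then \cref{lem:fibrewiseinvertibility:c} upgrades fibrewise invertibility to invertibility of the bundle. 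The remaining functoriality data — the compositors and their coherence — are the fibrewise versions of the intertwiners $c_{\phi,\psi}$ of \cref{lem:framing:a} and of the diagram \eqref{CompositorCoherence}, exactly as in the proof of \cref{lem:framingAlg}; here invertibility of $\varphi$ is precisely what lets \cref{ExampleImplementing1} apply in each fibre, so that these compositor intertwiners are again implementing.

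For (ii) I would observe that each fibre $(\mathcal{B}_x)_{\varphi_x}$ of $\mathcal{B}_\varphi$, being invertible, admits a right adjoint, so by \cref{lem:fibrewiseinvertibility:b} the bundle $\mathcal{B}_\varphi$ admits a right adjoint; concretely this adjoint is $\mathcal{A}_{\varphi^{-1}}$, with fibrewise evaluation and coevaluation as in the proof of \cref{lem:framingAlg}. Since $\mathcal{B}_\varphi$ is invertible, this right adjoint is in fact an inverse, hence itself lies in the bigroupoid $\sAlgBdlbigrpd kX$. Together with (i) this verifies all clauses of the definition of a framing.

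The only point that is not pure bookkeeping is the fibrewise-to-bundle passage in (i) and (ii): that a bimodule bundle whose fibres are invertible is itself invertible and has a right adjoint. But this is exactly \cref{lem:fibrewiseinvertibility}, already established, and it is the substantial content of the theory developed here; granting it, the present lemma is a direct transcription of the algebraic framing statement \cref{lem:framingAlg} to the bundle setting.
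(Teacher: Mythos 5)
Your proof is correct and follows essentially the same route as the paper: the paper's (inline) argument also builds $\mathcal{B}_\varphi$ via \cref{ExampleTwistedModuleIso}, invokes \cref{PropositionInvertibleImplementing} for implementability, and closes via \cref{lem:fibrewiseinvertibility}. You are slightly more explicit — separating the appeal to \cref{lem:fibrewiseinvertibility:c} (to land in the bigroupoid) from \cref{lem:fibrewiseinvertibility:b} (for the right adjoint), naming the inverse $\mathcal{A}_{\varphi^{-1}}$, and flagging the compositor coherence inherited from \cref{lem:framingAlg} — whereas the paper only cites \cref{lem:fibrewiseinvertibility:b}; both are fine since in a bigroupoid a right adjoint of an invertible $1$-morphism is automatically its inverse.
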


In the remainder of this subsection we show that the second option pursued in \cref{sec:smsonimplementedalgebras} works also here, namely the restriction to semisimple super algebras.  For this purpose, we need to explore under which circumstances a bimodule bundle $\mathcal{B}_{\varphi}$ can be defined, when $\varphi$ is a not necessarily invertible homomorphism of super algebra bundles.   

We first discuss how $\mathcal{B}_\varphi$ may fail to be locally trivial for general homomorphisms $\varphi: \mathcal{A} \to \mathcal{B}$.
As this is a local question, we may assume that $\mathcal{A} = \underline{A}$, $\mathcal{B} = \underline{B}$ are trivial bundles, and $\varphi: X \to \Hom(A, B)$ is a smooth map, and then consider the super vector bundle $\underline{B}_{\varphi}$, which is equal to $\underline{B}$ as a super vector bundle, and equipped with the  $A$-$B$-bimodule structure of $B_{\varphi_x}$ over the point $x\in X$.

\begin{lemma} \label{LemmaLocallyTrivial}
        The bundle $\underline{B}_{\varphi}$ is a bimodule bundle in the sense of \cref{def:bimodulebundle} if and only if each point $x_0 \in X$ has an open neighborhood $U$ on which there exist functions $\phi: U \to \Aut(B)$ and $\psi: U \to \Aut(A)$ with 
  \begin{equation} \label{ConditionLocallyTrivial}
    \phi_x \circ \varphi_x = \varphi_{x_0} \circ \psi_{x}\text{.}
  \end{equation}
\end{lemma}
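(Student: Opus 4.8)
The plan is to prove both implications by unwinding \cref{def:bimodulebundle} in the trivialized situation. Since $\mathcal{A}=\underline{A}$ and $\mathcal{B}=\underline{B}$, a local trivialization of $\underline{B}_{\varphi}$ over an open set $U\ni x_0$ consists of super algebras $A'$, $B'$, an $A'$-$B'$-bimodule $M'$, smooth families of algebra isomorphisms $\alpha_x\colon A\to A'$ and $\beta_x\colon B\to B'$ (the algebra-bundle trivializations of $\mathcal{A}|_U$ and $\mathcal{B}|_U$), and a smooth family of linear isomorphisms $u_x\colon B_{\varphi_x}\to M'$ satisfying $u_x(a\lact n\ract b)=\alpha_x(a)\lact u_x(n)\ract\beta_x(b)$ for all $a\in A$ and $b,n\in B$. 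The one elementary observation used throughout is that the underlying vector space of $B_{\varphi_x}$ is $B$ and in particular contains $1_B$, so evaluating the intertwining relation at $n=1_B$ reduces it to a relation between $\varphi_x$, $\varphi_{x_0}$ and the (a priori merely linear) map $u_x$.

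For the ``if'' direction, given $\phi\colon U\to\Aut(B)$ and $\psi\colon U\to\Aut(A)$ satisfying \eqref{ConditionLocallyTrivial}, I would exhibit the local trivialization with $A'=A$, $B'=B$, $M'=B_{\varphi_{x_0}}$, with $\mathcal{A}|_U$ trivialized by $\psi$, $\mathcal{B}|_U$ trivialized by $\phi$, and $\mathcal{M}|_U$ trivialized by the linear isomorphism $\phi_x\colon B_{\varphi_x}\to B_{\varphi_{x_0}}$ (note $\phi_x\in\Aut(B)$ is in particular a linear automorphism of $B$). Using that each $\phi_x$ is an algebra homomorphism, $\phi_x(\varphi_x(a)\,n\,b)=\phi_x(\varphi_x(a))\,\phi_x(n)\,\phi_x(b)$, so $\phi_x$ intertwines along $\psi_x$ and $\phi_x$ precisely when $\phi_x(\varphi_x(a))=\varphi_{x_0}(\psi_x(a))$, which is \eqref{ConditionLocallyTrivial}; smoothness is clear. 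This yields a local trivialization over the given $U$.

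For the ``only if'' direction, assume a local trivialization $(A',B',M';\alpha,\beta,u)$ over some $U\ni x_0$ is given. I would compare the trivialization at a general $x$ to the one at $x_0$ by setting $\psi_x:=\alpha_{x_0}^{-1}\circ\alpha_x\in\Aut(A)$, $\chi_x:=\beta_{x_0}^{-1}\circ\beta_x\in\Aut(B)$ and $v_x:=u_{x_0}^{-1}\circ u_x\colon B_{\varphi_x}\to B_{\varphi_{x_0}}$, all smooth in $x$. Composing the intertwining identities for $u_x$ and for $u_{x_0}^{-1}$ shows that $v_x$ intertwines along $\psi_x$ and $\chi_x$. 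Evaluating this on $n=1_B$, and then additionally on $a=1_A$, forces $v_x(b)=c_x\,\chi_x(b)$ with $c_x:=v_x(1_B)\in B$; since $b\mapsto c_x b$ equals $v_x\circ\chi_x^{-1}$ and is hence bijective while $B$ is finite-dimensional, $c_x\in B^{\times}$. Substituting $v_x(\cdot)=c_x\,\chi_x(\cdot)$ back into the full intertwining identity gives $c_x\,\chi_x(\varphi_x(a))=\varphi_{x_0}(\psi_x(a))\,c_x$ for all $a\in A$, that is $\phi_x\circ\varphi_x=\varphi_{x_0}\circ\psi_x$ where $\phi_x:=i(c_x)\circ\chi_x\in\Aut(B)$. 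Smoothness of $x\mapsto\phi_x$ follows since $x\mapsto c_x$ is smooth and inversion in the Lie group $B^{\times}$ is smooth.

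All the manipulations are routine applications of the intertwining identity, so I expect no serious difficulty. The only step needing genuine care --- and the one place finite-dimensionality of the algebras enters --- is in the ``only if'' direction: the trivialization $u_x$ is merely a linear isomorphism, so one must first recognize that $v_x$ decomposes as left multiplication by a unit $c_x\in B^{\times}$ composed with an algebra automorphism $\chi_x$ of $B$, and then absorb the left-multiplication factor into the inner automorphism $i(c_x)$ in order to obtain the honest algebra automorphism $\phi_x$ of $B$ required by the lemma, with smooth dependence on $x$.
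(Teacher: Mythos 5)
Your proof is correct and follows essentially the same route as the paper: both directions reduce to the observation that in the only-if direction, after dividing out the trivialization of the algebra bundles, the residual linear map of the bimodule trivialization is a one-sided multiplication by a unit $c_x$ that can be absorbed into an inner automorphism $i(c_x)$ of $B$. The one point worth flagging is a left/right convention mix-up: in the paper, $B_\varphi$ for a homomorphism $\varphi\colon A\to B$ is a \emph{$B$-$A$-bimodule} ($B$ acts on the left by multiplication, $A$ on the right through $\varphi$), whereas you treat it as an $A$-$B$-bimodule with $A$ acting on the left via $\varphi$; with the paper's convention, the substitution at $n=1_B$, $a=1_A$ gives $v_x(b)=\chi_x(b)\,c_x$ rather than $c_x\,\chi_x(b)$, and correspondingly $\phi_x=i(c_x)^{-1}\circ\chi_x$ instead of $i(c_x)\circ\chi_x$. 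This is immaterial for the statement (what matters is that $\phi_x$ is $\chi_x$ post-composed with some inner automorphism, smoothly in $x$), but should be made consistent with \cref{ExampleTwistedModuleIso} and \cref{PropositionLocallyTrivial}. A small stylistic difference from the paper's argument: the paper normalizes the trivialization $u$ by composing with $\phi^{-1}$ (the trivialization of $\mathcal{B}$), so that the residual $u'_x$ commutes with $B$-multiplication outright and hence equals $b\mapsto bc_x$ from a single substitution; you instead compare to the reference point by composing with $u_{x_0}^{-1}$ and then decompose the transition $v_x$ using two substitutions. Both normalizations reach the same place with the same amount of work.
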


\begin{proof}
Suppose there exists a local trivialization $(B, M, A; \phi, u, \psi)$ over an open set $U\subset X$. We can assume that $M=B_{\varphi_0}$ for some fixed algebra homomorphism  $\varphi_0 : A \to B$.
For all $x \in U$, this data has to satisfy
\begin{equation*}
  u_x(bc \ract \varphi_x(a)) = \phi_x(b) \lact u_x(c) \ract (\varphi_0 \circ \psi)(a), \qquad a \in A, ~~ c \in B_{\varphi_0}~~ b \in B.
\end{equation*}
Since $u^\prime := \phi^{-1} \circ u$ commutes with left multiplication by elements of $B$, it must be of the form $u_x(b) = b c_x$ for a smooth function $c: U \to B$ (explicitly, $c_x = u_x(1)$).
\begin{comment}
  Indeed, $u^\prime_x(b) = u^\prime_x(b \lact 1) = b \lact u^\prime_x(1) = b u^\prime_x(1)$.
\end{comment}
We then have
\begin{equation*}
  \varphi_x(b) c_x = u^\prime(1 \ract \varphi_x(a)) = u^\prime(1) \ract (\phi_x^{-1} \circ \varphi_0 \circ \psi_x)(a) = c_x (\phi_x^{-1} \circ \varphi_0 \circ \psi_x)(a),\qquad a \in A,
\end{equation*}
in other words, we have $\phi_x^\prime \circ \varphi_x = \varphi_0 \circ \psi$, with $\phi^\prime = \phi \circ i(c)^{-1}$, where $i(c) \in \Aut(B)$ denotes pointwise conjugation by $c$.
\\
Conversely, given functions $\phi$ and $\psi$ as in the statement of the lemma, a local trivialization over $U$ is given by $(B, B_{\varphi_0}, A; \phi, \phi, \psi)$.
\end{proof}

In particular, \cref{ConditionLocallyTrivial} implies that the subalgebras $\varphi_x(A) \subset B$, $x \in U$, are all conjugate, in the sense that for each $x \in X$, there exists a smooth map $\phi': U \to \Aut(B)$ that maps $\varphi_x(A)$ to $\varphi_0(A)$.
We now give examples where this fails.

\begin{example}
\label{ex:loctrivfail1}
The following is an example of a 1-parameter family of pairwise non-conjugate subalgebras of $B = M_4(k)$, suggested in \cite{Cornulier}.
For $\lambda \in k$, consider the matrices
\begin{equation*}
  X =
  \begin{pmatrix} 
  0 & 1 & 0 & 0 \\
  0 & 0 & 0 & 0 \\
  0 & 0 & 0 & 1 \\
  0 & 0 & 0 & 0
  \end{pmatrix},
  \qquad
  Y_\lambda =
  \begin{pmatrix} 
  0 & 1 & 1 & 0 \\
  0 & 0 & 0 & \lambda \\
  0 & 0 & 0 & -\lambda \\
  0 & 0 & 0 & 0
  \end{pmatrix},
  \qquad
  Z =
  \begin{pmatrix} 
  0 & 0 & 0 & 1 \\
  0 & 0 & 0 & 0 \\
  0 & 0 & 0 & 0 \\
  0 & 0 & 0 & 0
  \end{pmatrix}
\end{equation*}
in $B$.
Let $A_\lambda$ be the subalgebra of $M_4(k)$ spanned by $X$, $Y_{\lambda}$, $Z$ and the identity matrix.
These subalgebras are pairwise non-isomorphic, except for $A_\lambda \cong A_{\lambda^{-1}}$, which can be seen by that fact that the system of equations 
\begin{equation*}
x^2 = 0, \qquad y^2 = 0, \qquad xy = \mu yx, \qquad xy\neq 0
\end{equation*}
in $A_{\lambda}$ has a solution if and only if $\mu \in \{\lambda, \lambda^{-1}\}$.
Now let $A$ be the algebra spanned by $1$ and elements $\tilde{X}$, $\tilde{Y}$, $\tilde{Z}$, $\tilde{Z}^\prime$, subject to the relation $\tilde{X}\tilde{Y} = \tilde{Z}^\prime$, $\tilde{Y}\tilde{X} = \tilde{Z}$ and all other products zero.
Let $\phi_\lambda : A \to A_\lambda \subset B$ be the algebra homomorphism sending $\tilde{X} \mapsto X$, $\tilde{Y} \to Y_\lambda$, $\tilde{Z} \to Z$, $\tilde{Z}^\prime \mapsto \lambda Z$.
Then $\phi : k \to \Hom(A, B)$ is a smooth family of homomorphisms with pairwise non-isomorphic image.
\end{example}

\begin{example}
\label{ex:loctrivfail2}
Consider the setting of \cref{ExampleTwistedModuleNotImplementing}.
There $B = M_{2m}(k)$ and $A = k \oplus V$, where $V$ is a $k$-vector space and the multiplication is such that the product of any two elements in $V$ is zero.
We then explained how any linear map $\tilde{\varphi} : V \to M_m(k)$ gives rise to an algebra homomorphism $\varphi: A \to B$.
Let now $\tilde{\varphi} : X \to \Hom(V, M_m(k))$ be a smooth family of \emph{injective} linear maps. 
Then the relation of \cref{LemmaLocallyTrivial} will be satisfied for some $x$ close to $x_0$ if and only if the subalgebras $\varphi_x(A)$ and $\varphi_{x_0}(A)$ are conjugate in $M_{2m}(k)$.
Since every automorphism of $M_{2m}(k)$ is inner, this implies that there exists $b \in \mathrm{GL}_{2m}(k)$ such that $b \varphi_x(V) b^{-1} = \varphi_{x_0}(V)$.
Each $\varphi_x(V)$ is a $\dim(V)$-dimensional subspace of the lower left corner $M_m(k) \subset M_{2m}(k)$, and (as conjugation by multiples of the identity in $\mathrm{GL}(k)$ does nothing), we are looking at the action of the projective linear group $\mathrm{PGL}_{2m}(k)$ on the Gra\ss mannian of $\dim(V)$-dimensional subspaces inside $M_m(k) \subset M_{2m}(k)$.
\\
As  observed in \cite{Cornuliera}, if we set $m=4$ and let $V = k^8$, then this Gra\ss mannian is a manifold of dimension $\dim(V)(m^2 - \dim(V)) = 64$, while $\mathrm{PGL}_{2m}(k)$ has dimension $(2m)^2 - 1 = 63$.
Hence the action cannot be locally transitive. 
Therefore choosing an $X$-family of subspaces transverse to the orbits of $\mathrm{PGL}_{2m}(k)$ and homomorphisms $\varphi$ with these subspaces as image produces an example where \eqref{ConditionLocallyTrivial} does not hold in any neighborhood of the point $x_0$.
\end{example}

In order to guarantee  the condition imposed by \cref{LemmaLocallyTrivial} one may restrict to  the full sub-bicategory
\begin{equation*}
\sssAlgBdlbi_{k}(X) \subset \sAlgBdlbi_k,
\end{equation*}
of \emph{semisimple algebra bundles}, the objects of which are algebra bundles with semisimple fibers.
Recall that by \cref{co:ssimp} \emph{all} bimodules between semisimple super algebras are implementing.
As announced at the beginning of this section, the bicategory of semisimple algebra bundles admits a framing.
This statement follows from the following result (which we will prove later in this section).

\begin{proposition} \label{PropositionLocallyTrivial}
Let $\mathcal{A}$ and $\mathcal{B}$ be semisimple super algebra bundles and let $\varphi: \mathcal{A} \to \mathcal{B}$ be a super algebra bundle homomorphism. 
Then, $\mathcal{B}_\varphi$ is a  $\mathcal{B}$-$\mathcal{A}$-bimodule bundle in the sense of \cref{def:bimodulebundle}.
\end{proposition}

\begin{corollary} \label{CorollaryFraming}
The bicategory of semisimple algebra bundles over $X$ has a framing
\begin{equation} \label{FramingForSemiSimple}
\sssAlgBdl_{k}(X) \to \sssAlgBdlbi_k(X)
\end{equation}
by the category of semisimple super algebra bundles and all super algebra bundle homomorphisms.
\end{corollary}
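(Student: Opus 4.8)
The plan is to run the argument of \cref{lem:framingAlg} fibrewise, invoking \cref{PropositionLocallyTrivial} at the one point where local triviality must be checked and \cref{co:ssimp} to guarantee that every bimodule bundle produced is implementing. In particular the proof parallels that of \cref{PropFraming1}, with \cref{PropositionLocallyTrivial} replacing the invertibility of $\varphi$ used there.

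First I would define the functor \eqref{FramingForSemiSimple}. It is the identity on objects. A morphism $\varphi\colon\mathcal{A}\to\mathcal{B}$ of semisimple super algebra bundles is sent to the bimodule bundle $\mathcal{B}_\varphi$ whose fibre over $x\in X$ is $(\mathcal{B}_x)_{\varphi_x}$; by \cref{PropositionLocallyTrivial} this is a $\mathcal{B}$-$\mathcal{A}$-bimodule bundle in the sense of \cref{def:bimodulebundle}, and since all its fibres are bimodules between semisimple algebras it is implementing by \cref{co:ssimp}, hence a $1$-morphism of $\sssAlgBdlbi_k(X)$. The identity homomorphism $\id_{\mathcal{A}}$ is sent to the identity bimodule bundle $\mathcal{A}_{\id}=\mathcal{A}$. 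For $\varphi\colon\mathcal{A}\to\mathcal{B}$ and $\psi\colon\mathcal{B}\to\mathcal{C}$ the compositor is the bundle morphism $\mathcal{C}_\psi\otimes_{\mathcal{B}}\mathcal{B}_\varphi\to\mathcal{C}_{\psi\circ\varphi}$ which over each point is $c_{\varphi_x,\psi_x}$ of \cref{lem:framing:a}, namely $m\otimes b\mapsto m\ract\psi_x(b)$. It is induced by the smooth bundle map $\mathcal{C}\otimes\mathcal{B}\to\mathcal{C}$, $m\otimes b\mapsto m\ract\psi(b)$, through the smooth surjective submersion $\mathcal{C}_\psi\otimes\mathcal{B}_\varphi\to\mathcal{C}_\psi\otimes_{\mathcal{B}}\mathcal{B}_\varphi$ of \cref{PropositionDefinitionTensorProduct} (which applies, both factors being implementing), hence it is a smooth intertwiner; being fibrewise invertible, it is invertible by \cref{lem:fibrewiseinvertibility:a}. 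Coherence with the associators is the fibrewise instance of \eqref{CompositorCoherence}, and the unit conditions are immediate from $\mathcal{A}_{\id}=\mathcal{A}$. This exhibits \eqref{FramingForSemiSimple} as a weak $2$-functor that is the identity on objects between two (bi)categories with the same class of objects.

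It then remains only to check the adjoint axiom in the definition of a framing: that $\mathcal{B}_\varphi$ admits a right adjoint in $\sssAlgBdlbi_k(X)$ for every $\varphi$. Over each point $x$ the bimodule $(\mathcal{B}_x)_{\varphi_x}$ has the right adjoint $\pss{\varphi_x}\mathcal{B}_x$, by the computation in the proof of \cref{lem:framingAlg}; hence, exactly as in the proof of \cref{PropFraming1}, \cref{lem:fibrewiseinvertibility:b} produces a right adjoint bimodule bundle for $\mathcal{B}_\varphi$, which again has semisimple fibres over semisimple algebra bundles and therefore lies in $\sssAlgBdlbi_k(X)$. This proves \cref{CorollaryFraming}. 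The only input beyond \cref{PropositionLocallyTrivial} is the smoothness of the fibrewise compositors and of the coevaluations of the fibrewise adjunctions, and these are handled respectively by \cref{PropositionDefinitionTensorProduct} and \cref{lem:fibrewiseinvertibility:b}; so all the genuine difficulty is concentrated in \cref{PropositionLocallyTrivial}, and once that is available the corollary is essentially formal, with no substantive obstacle expected.
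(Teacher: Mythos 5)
Your proof is correct and follows essentially the same route as the paper: use \cref{PropositionLocallyTrivial} to get that $\mathcal{B}_\varphi$ is a bimodule bundle, \cref{co:ssimp} to see it is implementing, and \cref{lem:fibrewiseinvertibility:b} (together with the pointwise adjunction from the proof of \cref{lem:framingAlg}) to produce the right adjoint. You supply a fuller account of the compositors and coherence data (via \cref{lem:framing:a}, \cref{PropositionDefinitionTensorProduct}, and \cref{lem:fibrewiseinvertibility:a}) than the paper writes out, but the substance is the same.
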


\begin{proof}
\Cref{PropositionLocallyTrivial} states that for a morphism $\varphi : \mathcal{A} \to \mathcal{B}$ in $\sssAlgBdl_k(X)$, we indeed have a well-defined  $\mathcal{B}$-$\mathcal{A}$-bimodule bundle $\mathcal{B}_\varphi$, which is then implementing by \cref{co:ssimp}. 
Hence, there is a well-defined functor $\sssAlgBdl_k(X) \to \sssAlgBdlbi_k(X)$.
That this is indeed a framing follows again from \cref{lem:fibrewiseinvertibility:b}.
\end{proof}

The remainder of this section is dedicated to proving \cref{PropositionLocallyTrivial}. Note that we only have to prove that $\mathcal{B}_{\varphi}$ is a bimodule bundle.
The crucial step in the proof is the following lemma.

\begin{lemma} \label{LemmaOrbit}
  Let $A$ and $B$ be semisimple super algebras.
  Then the orbits of the action of $B^\times_0$ on $\Hom(A, B)$ given by $(b, \varphi) \mapsto i(b) \circ \varphi$ are open and closed.
\end{lemma}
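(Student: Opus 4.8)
## Proof plan for Lemma~\ref{LemmaOrbit}

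The plan is to show that each orbit of the $B_0^\times$-action on $\Hom(A,B)$ is open; since the complement of an orbit is a union of other orbits, openness of all orbits immediately gives that each orbit is also closed. So the whole statement reduces to proving openness of orbits, and for that it is enough to find, for any $\varphi \in \Hom(A,B)$, a neighborhood $U$ of $\varphi$ such that every $\varphi' \in U$ lies in the $B_0^\times$-orbit of $\varphi$.

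The key idea is a rigidity (Whitehead-type) argument for homomorphisms out of a semisimple algebra: two nearby homomorphisms from a semisimple superalgebra $A$ into $B$ differ by conjugation by an even unit. First I would reduce to the ungraded case by restricting to the even part; since $A$ is semisimple, so is $A_0$, and a superalgebra homomorphism $A \to B$ is in particular a homomorphism $A_0 \to B_0$ together with compatibility on the odd part. Concretely, given $\varphi$ and a sufficiently close $\varphi'$, consider the element
\begin{equation*}
b := \frac{1}{|G|}\sum_{g \in G} \varphi'(e_g)\,\varphi(e_g)^{-1}
\end{equation*}
— but since $A$ need not be a group algebra, the cleaner route is the averaging/idempotent-lifting argument: write $1 = \sum_i p_i$ as a sum of orthogonal primitive even idempotents generating the simple summands; when $\varphi'$ is close to $\varphi$, the idempotents $\varphi'(p_i)$ are close to $\varphi(p_i)$, hence by \cref{LemmaCloseProjections} (applied in the finite-dimensional, hence Banach, algebra $B$ with a submultiplicative norm, cf.\ \cref{RemarkFinDimBanach}) there is an even invertible $b_0 \in B$ conjugating $\varphi'(p_i)$ to $\varphi(p_i)$ simultaneously. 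After replacing $\varphi'$ by $i(b_0)^{-1}\circ\varphi'$ we may assume $\varphi$ and $\varphi'$ agree on all the $p_i$, i.e.\ they send each simple summand $A_i$ into the same corner $\varphi(p_i) B \varphi(p_i)$. On each such corner the two homomorphisms $A_i \to \varphi(p_i)B\varphi(p_i)$ agree on the unit and are both close to each other; by the Skolem--Noether-type statement for maps out of a (super)simple algebra — any two homomorphisms of a central simple superalgebra into a superalgebra with the same unit are conjugate, and nearby ones are conjugate by an element close to $1$ — they differ by conjugation by an even unit of that corner. Assembling these corner-wise conjugators gives a single even unit of $B$ taking $\varphi'$ to $\varphi$. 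Hence a whole neighborhood of $\varphi$ lies in its orbit.

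The step I expect to be the main obstacle is the super (graded) version of Skolem--Noether and making the conjugator depend continuously/closely on $\varphi'$ so that the neighborhood argument is clean. In the ungraded case this is classical; in the graded case one needs the graded Artin--Wedderburn description (every simple superalgebra is $M_n(D)$ for a super division algebra $D$, as recalled after \cref{RemarkHHSemisimple}) and must check that two graded homomorphisms with equal unit into such a matrix superalgebra are conjugate by an even unit — this uses that the relevant module (the image of a simple $A_i$-module) is determined up to even isomorphism and graded parity shift, and one has to track the even/odd bookkeeping. A safe way to organize this and get the "nearby" refinement is: both $\varphi$ and $\varphi'$ make $B$ (restricted along them) into $A_i$-modules, which must be isomorphic for dimension reasons; an isomorphism is an even unit $b$ with $\varphi' = i(b)\circ\varphi$; and openness of the orbit is then equivalent to the statement that the assignment "$\varphi' \mapsto$ the $A_i$-module structure on $B$" is locally constant on $\Hom(A,B)$ up to isomorphism, which follows because the finitely many discrete invariants (multiplicities of the simple graded $A_i$-modules and their parity shifts in $B$) vary continuously, hence are locally constant. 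Once orbits are open, closedness is automatic and the lemma follows.
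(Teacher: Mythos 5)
Your primary argument is structurally the same as the paper's proof (reduce to agreement on idempotents via \cref{LemmaCloseProjections}, then graded Skolem--Noether on the corners), but as written it has a genuine gap: the corner $\varphi(p_i)B\varphi(p_i)$ need not be simple, let alone central simple, because you have not reduced to $B$ simple. With $A=k$, $B=k\oplus M_2(k)$ and $\varphi$ the unital embedding, the corner is all of $B$. Skolem--Noether requires the \emph{target} to be central simple, so this step breaks. The paper fixes this by first decomposing $B$ into its simple summands and cutting $\varphi$ down by the central idempotents of $B$ (not only those of $A$), reducing to $B$ simple, and only then decomposing $A$; the corners $q_iBq_i$ inside a simple $B$ are then shown to be simple by an explicit ideal argument. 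Even so, over $k=\R$ a simple corner may fail to be central (its centre can be $\C$), which the paper handles by a further base-change to $\C$ and reiteration. Your proposal addresses neither issue.

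The alternative you sketch at the end --- view $B$ along $\varphi$ as a graded $A$-$B$-bimodule $B_\varphi$ (equivalently a graded $A\otimes B^{\opp}$-module), observe that its graded isomorphism type is encoded in finitely many integer multiplicities of simple graded modules (the algebra $A\otimes B^{\opp}$ being semisimple since $A$ and $B$ are, over a perfect field), hence is locally constant in $\varphi$, and extract an even unit $b$ with $\varphi'=i(b)\circ\varphi$ from a right-$B$-linear even bimodule isomorphism $u:B_\varphi\to B_{\varphi'}$ via $b=u(1)$ --- is a genuinely different and, in fact, cleaner route than the paper's: it avoids Skolem--Noether and the centrality issue entirely. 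Be careful that you really do need the $A$-$B$-bimodule structure, not just the left $A$-module structure as you write, so that the isomorphism commutes with right multiplication and is therefore left multiplication by an element of $B$. Spelled out, with the local constancy justified via graded characters (continuous $\Z$-valued functions of $\varphi$), this is a complete proof; I would promote it to the main argument in place of the corner/Skolem--Noether route.
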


\begin{proof}
Let $\varphi \in \Hom(A, B)$. 
We have to show that each $\psi \in \Hom(A, B)$ lying in the same connected component as $\varphi$ is of the form $\psi = i(b) \circ \varphi$.
\\
Let $B = B_1 \oplus \dots \oplus B_n$ be the direct sum decomposition of $B$ into simple summands $B_i$ and let $p_1, \dots, p_n$ be the projections onto $B_i$, which are central idempotents satisfying $p_i p_j = 0$ for $i \neq j$ and $p_1 + \dots + p_n = 1$. 
Define $\varphi_i : A \to B_i$ by $\varphi_i(a) = p_i \varphi(a) p_i$.
Then each $\varphi_i$ is an algebra homomorphism and $\varphi = \varphi_1 + \dots + \varphi_n$.
Now if given $\psi \in \Hom(A, B)$, there exist elements $b_i \in (B_i)_0^\times$, $i=1, \dots, n$, such that $\psi_i = i(b_i) \circ \varphi_i$, then the element $b := b_1 + \dots + b_n \in B^\times_0$ satisfies $\psi = i(b) \circ \varphi$.
Hence we can and will from now on assume that $B$ is simple.
\begin{comment}
Let us check these claims: We have $\varphi_i(a) = p_i \varphi(a) p_i = p_i \varphi(a)$ (as $p_i \in Z(B)$), hence 
\[
\varphi(a) = (p_1 + \dots + p_n) \varphi(a) = p_1 \varphi(a) + \dots + p_n \varphi(a) = \varphi_1(a) + \dots + \varphi_n(a).
\]
$\varphi_i$ is an algebra homomorphism as 
\[
\varphi_i(ab) = p_i \varphi(ab) = p_i^2 \varphi(a) \varphi(b) = p_i \varphi(a) p_i \varphi(b) = \varphi_i(a) \varphi_i(b).
\]
Finally, if $\psi_i = i(b_i) \circ \varphi_i$, then
\begin{align*}
  (i(b) \circ \varphi)(a) &= (b_1 + \dots + b_n) \varphi(a) (b_1 + \dots + b_n)^{-1} \\
        &= (b_1 + \dots + b_n) (\varphi_1(a) + \dots + \varphi_n(a)) (b_1^{-1} + \dots + b_n^{-1})\\
        &= \sum_{ijk=1}^n b_i p_j \varphi(a) p_j b_k^{-1}\\
        &= \sum_{j=1}^n b_ip_i \varphi(a) p_i b_i^{-1} \\
        &= \sum_{j=1}^n b_i \varphi_i (a) p_i b_i^{-1} \\
        &= \sum_{j=1}^n \psi_i(a),
\end{align*}
using that $b_i p_j = p_j b_k^{-1}$ for $i \neq j$ and $j \neq k$.
\end{comment}
\, \\
Let $A = A_1 \oplus \dots \oplus A_m$ be the decomposition of $A$ into simple summands and let $p_1, \dots, p_m$ be the corresponding projections.
As each $A_i$ is simple, the ideal $\ker(\varphi) \subset A$ must be of the form $p A p$ for some central idempotent $p$ in $A$ of the form $p = p_{i_1} + \dots + p_{i_r}$, with $1 \leq i_1 < \dots < i_r \leq m$.
As the set of these idempotents is discrete, each $\psi \in \Hom(A, B)$ in the same connected component as $\varphi$ must satisfy $\ker(\psi) = \ker(\varphi)$.
Therefore, by passing to the quotient $A / pAp \cong \bigoplus_{j \neq i_1, \dots, i_r} A_j$, which is again semisimple, we can and will assume from now on that $\varphi$ is injective (and hence also each homomorphism $\psi$ in the same connected component as $\varphi$).
\\
Let $q_i = \varphi(p_i)$, an idempotent in $B$. 
For $\psi \in \Hom(A, B)$ in the same connected component as $\varphi$, let $q_i^\prime = \psi(p_i)$. 
Then if $\psi$ is sufficiently close to $\varphi$, \cref{LemmaCloseProjections} provides invertible elements $a_1, \dots, a_m \in B^\times_0$ such that $q_i^\prime = a_i q_i a_i^\prime$.
Then $a = \sum_{j=1}^m q^\prime_j a_j q_j$ is invertible with  inverse $a^{-1} = \sum_{j=1}^m q_j a_j^{-1} q^\prime_j$  and for each $i=1, \dots, m$
\begin{equation*}
  a q_i a^{-1} = \sum_{jk=1}^m q^\prime_j a_j q_j q_i q_k a_k^{-1} q^\prime_k = q^\prime_i a_i q_i a_i^{-1} q^\prime_i = q^\prime_i.
\end{equation*}
In general, if $\psi$ is in the same connected component as $\varphi$, we obtain an invertible $a \in B^\times_0$ by applying \cref{LemmaCloseProjections} to subsequent subdivisions of a connecting path, as in the proof of \cref{PropositionInvertibleImplementing}.
Hence replacing $\psi$ by $i(a^{-1}) \circ \psi$ reduces to the case $q_i = q_i^\prime$.
\\
Now, for each $i=1, \dots, m$, the algebra $B_i := q_i B q_i$ is again simple. 
Indeed, if $I \subset B_i$ is an ideal, then 
\begin{equation*}
I^\prime := I + IA(1-q_i) + (1-q_i) A I + (1-q_i) AIA (1-q_i)
\end{equation*}
 is an ideal in $B$, which is non-trivial if and only if $I$ is.
Hence as $B$ does not have any non-trivial ideals, neither does $B_i$.
\begin{comment}
  We show that $I^\prime$ is an ideal. Here set $q_i^\perp = 1-q_i$.
  \begin{align*}
  A I &= q_i^\perp A I + q_i A I \subset q_i^\perp A I + I \\
  AIAq_i^\perp &= q_i^\perp AIAq_i^\perp + q_i AIA q_i^\perp \subset q_i^\perp AIAq_i^\perp + I A q_i^\perp \\
  A q_i^\perp A I &= q_i^\perp A q_i^\perp AI + q_i A q_i^\perp AI \subset q_i^\perp AI + I \\
  A q_i^\perp AIA q_i^\perp & = q_i^\perp Aq_i^\perp AIA q_i^\perp + q_i Aq_i^\perp AIA q_i^\perp \subseteq q_i^\perp AIA q_i^\perp + IAq_i^\perp
  \end{align*}
  Hence $A I^\prime \subset I^\prime$. For $I^\prime A \subset I^\prime$, read the above calculations in a mirror (and then from right to left).
\end{comment}
Therefore $\varphi : A \to B$ splits up into a direct sum of homomorphisms $\varphi_i : A_i \to B_i$, where both $A_i$ and $B_i$ are simple, and the same holds for $\psi : A \to B$ close enough to $\varphi$.
Assume first that $B_i$ is also central simple.
Then by the graded version of the Skolem-Noether theorem \cite[Thm.~3.2]{JaberFirstKind}, there exist $b_i \in B_i$ such that $\psi_i = i(b_i) \circ \varphi_i$.
\begin{comment}
This theorem says that if $A$ is simple and $B$ is central simple and $\varphi$, $\psi$ are two homomorphisms, then either $\psi$ or $\Gamma \circ \psi$ (where $\Gamma$ is the grading automorphism) is inner conjugate to $\varphi$. 
If $\varphi$ and $\psi$ are close, then we must be in the first case.
\end{comment}
We then conclude that $b := b_1 + \dots + b_m$ is invertible and satisfies $\psi = i(b) \circ \varphi$.
\\
We are left to discuss the case that one of the simple summands $B_i$ is not central simple.
This can only happen if $k=\R$ and $Z(B_i) = \C$, as $Z(B_i)$ is always a finite field extension of $k$ and $\C$ does not have any non-trivial finite field extensions.
In that case, we replace $A_i$ by $\tilde{A}_i = A_i \otimes_{\R} \C$ and extend homomorphisms $\phi : A_i \to B_i$ to homomorphisms $\tilde{\phi} : \tilde{A}_i \to B_i$ by complex linearity.
Observe that if we find $b_i \in B_i$ such that $\tilde{\psi}_i = i(b_i) \circ \tilde{\varphi}_i$, then also ${\psi}_i = i(b_i) \circ {\varphi}_i$.
Now $B_i$ is central as an algebra over $\C$, however $\tilde{A}_i$ is still semisimple but may no longer be simple (for example, if $A_i = \C$, then $\tilde{A}_i = \C \otimes_\R \C \cong \C \oplus \C$).
In that case, we have to go back and split $\tilde{A}_i = \tilde{A}_{i, 1} \oplus \dots \oplus \tilde{A}_{i, k}$ into simple summands and correspondingly $B_i = B_{i_1} \oplus \dots \oplus B_{i, k}$. 
However, as we are now over $k = \C$, each algebra $B_{i, j}$ is now necessarily central simple.
\end{proof}

\begin{proof}[Proof of \cref{PropositionLocallyTrivial}]
As this is a local question, we may assume that $\mathcal{A} = \underline{A}$ and $\mathcal{B} = \underline{B}$ are trivial bundles and that $\mathcal{B}_\varphi = \underline{B}_\varphi$ for some smooth function $\varphi: X \to \Hom(A, B)$.
We have to find trivializations in a neighborhood of each point in $X$. 
\\
To this end, fix $x_0 \in X$ and consider the map $B_0^\times \to \Hom(A, B)$, $b \mapsto i(b) \circ \varphi_{x_0}$.
The stabilizer $H$ of $\varphi_{x_0}$ is the set of all $b \in B_0^\times$ that commute with the subalgebra $\varphi_{x_0}(A)$.
Hence by \cref{LemmaOrbit}, the map $[b] \to i(b) \circ \varphi_x$ provides an identification of the homogeneous space $B_0^\times /H$ with the connected component $\Hom(A, B)_{\varphi_{x_0}}$ of $\varphi_{x_0}$ in $\Hom(A, B)$.
In particular, the map $B_0^\times \to \Hom(A, B)_{\varphi_{x_0}}$ is a fiber bundle (with typical fiber $H$).
Hence in a neighborhood $U$ of $x_0$, we can choose a lift $b : X \to B_0^\times$ of $\varphi : X\to \Hom(A, B)$.
By definition, such a lift satisfies $b_x \varphi_{x_0}(a) b_x^{-1} = \varphi_x(a)$ for all $x \in U$ and $a \in A$.
Then $(B, B_{\varphi_{x_0}}, A; \mathrm{id}, r_b, \varphi)$ provides a local trivialization over $U$, where $r_b$ is the right multiplication by $b$.
\begin{comment}
  Indeed, $r_{b}(c \lact d \ract \varphi(a)) = cd \varphi(a)b = cd b \varphi_x(a) = c \lact r_b(d) \lact \varphi_x(a)$.
\end{comment}
\end{proof}

Another consequence of \cref{LemmaOrbit} is the following result, that concerns the possible alternative definition of bimodule bundles mentioned in \cref{RemarkNoTypicalFiber}. It shows that it is -- for semisimple algebras -- equivalent to our definition.

\begin{proposition}
\label{prop:weakernotioncoincides}
Let $\mathcal{A}$ and $\mathcal{B}$ be semisimple super algebra bundles, and let $\mathcal{M}$ be a super vector bundle together with a super algebra bundle homomorphism $\varphi : \mathcal{A} \otimes \mathcal{B}^{\opp} \to \mathrm{End}(\mathcal{M})$. Then, $\mathcal{M}$ canonically has the structure of an $\mathcal{A}$-$\mathcal{B}$-bimodule bundle in the sense of \cref{def:bimodulebundle}.   
\end{proposition}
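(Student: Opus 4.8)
The plan is to reduce the claim to a local question and then invoke \cref{LemmaOrbit}, in close analogy with the proof of \cref{PropositionLocallyTrivial}. Being an $\mathcal{A}$-$\mathcal{B}$-bimodule bundle in the sense of \cref{def:bimodulebundle} is a local condition, so I would fix $x_0 \in X$ and shrink to an open neighborhood $U$ on which $\mathcal{A}$ and $\mathcal{B}$ are trivialized as super algebra bundles, $\mathcal{A}|_U \cong U\times A$ and $\mathcal{B}|_U \cong U\times B$, and $\mathcal{M}$ is trivialized as a super vector bundle, $\mathcal{M}|_U \cong U\times E$ for some super vector space $E$. Under these identifications the structure morphism becomes a smooth family $\varphi : U \to \Hom(A\otimes B^{\opp},\End(E))$ of super algebra homomorphisms, where $\End(E)$ denotes the full super algebra of endomorphisms of $E$.

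The key point is that both $C := A\otimes B^{\opp}$ and $D := \End(E)$ are semisimple super algebras: $B^{\opp}$ is semisimple because $B$ is, the graded tensor product of semisimple super algebras is semisimple, and $\End(E)$ is a matrix super algebra, hence simple. So \cref{LemmaOrbit} applies with $C$ and $D$ in the roles of $A$ and $B$, and shows that the orbits of the conjugation action $(d,\varphi)\mapsto i(d)\circ\varphi$ of $D_0^\times$ on $\Hom(C,D)$ are open and closed. Let $H\subset D_0^\times$ be the stabilizer of $\varphi_{x_0}$, \ie the even invertible endomorphisms of $E$ that commute with $\varphi_{x_0}(C)$; it is closed, hence a Lie subgroup, and the orbit map identifies the connected component $\Hom(C,D)_{\varphi_{x_0}}$ of $\varphi_{x_0}$ with the homogeneous space $D_0^\times/H$. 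In particular $D_0^\times\to\Hom(C,D)_{\varphi_{x_0}}$ is a fiber bundle. Since $\varphi$ is continuous with $\varphi(x_0)=\varphi_{x_0}$, after shrinking $U$ we may assume that $\varphi$ takes values in the open-and-closed set $\Hom(C,D)_{\varphi_{x_0}}$, and therefore lift it to a smooth map $b : U \to D_0^\times$ with $\varphi_x = i(b_x)\circ\varphi_{x_0}$ for all $x\in U$.

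It then remains to verify that the data $(A, M, B; \id_A, u, \id_B)$ is a local trivialization of $\mathcal{M}$ over $U$ in the sense of \cref{def:bimodulebundle}, where $M := E$ carries the $A$-$B$-bimodule structure induced from $\varphi_{x_0}: A\otimes B^{\opp}\to\End(E)$, the trivializations of $\mathcal{A}$ and $\mathcal{B}$ are the ones already fixed, and $u : \mathcal{M}|_U\to U\times M$ is the super vector bundle isomorphism given fibrewise by the even invertible endomorphism $u_x := b_x^{-1}\in D_0^\times$. Rearranging $\varphi_x = i(b_x)\circ\varphi_{x_0}$ yields $b_x^{-1}\varphi_x(\xi) = \varphi_{x_0}(\xi)\,b_x^{-1}$ for every $\xi\in A\otimes B^{\opp}$, which is precisely the statement that $u_x$ is an intertwiner along $\id_A$ and $\id_B$ in the sense of \eqref{IntertwiningCondition}; smoothness of $u$ is immediate from smoothness of $b$. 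Since $x_0$ was arbitrary, this exhibits $\mathcal{M}$ as an $\mathcal{A}$-$\mathcal{B}$-bimodule bundle. The only substantial ingredient is \cref{LemmaOrbit}; everything else is bookkeeping, and I expect the one point requiring care to be parity, which is taken care of by the fact that the lift $b$ takes values in the even invertibles $D_0^\times$.
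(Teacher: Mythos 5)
Your proof is correct and follows essentially the same approach as the paper's: trivialize $\mathcal{A}$, $\mathcal{B}$, and $\mathcal{M}$ locally, observe that $A\otimes B^{\opp}$ and $\End(E)$ are both semisimple so that \cref{LemmaOrbit} applies, lift the resulting family of homomorphisms through the orbit map $\GL_0(E)\to\Hom(A\otimes B^{\opp},\End(E))$, and use the lift to twist the vector bundle trivialization into a bimodule bundle trivialization. The only cosmetic difference is that you write the algebra bundle trivialization data as $\id_A,\id_B$ after having already passed to the local product picture, whereas the paper keeps the names $\phi,\psi$ for those trivializations; the content is the same.
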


\begin{proof}
The fibers $\mathcal{M}_x$, $x \in X$, obtain bimodule structures by setting
\begin{equation*}
  a \lact m \ract b = (-1)^{|b||m|} \varphi(a \otimes b)m.
\end{equation*}
We have to construct local trivializations for this family of bimodules.
To this end, let $\phi : \mathcal{A}|_U \to U \times A$, $\psi: \mathcal{B}|_U = U \times B$ be local trivializations of superalgebra bundles and let $g : \mathcal{M}|_U = U \times M$ be a vector bundle trivialization.
Under these trivializations, the super algebra bundle homomorphism $\mathcal{A} \otimes \mathcal{B}^{\opp} \to \mathrm{End}(\mathcal{M})$ corresponds to a smooth family $\tilde{\varphi} : X \to \Hom(A \otimes B^{\mathrm{op}}, \End(M))$.
\begin{comment}
  Explicitly, this is given by $\tilde{\varphi}_x(\phi_x(a) \otimes \psi_x(b)) = i(g_x) \circ \varphi_x(a \otimes b)$.
\end{comment}
Let $x_0 \in X$ and consider the map $\GL_0(M) \to \Hom(A \otimes B^{\mathrm{op}}, \End(M))$ given by $u \mapsto i(u) \circ \tilde{\varphi}_{x_0}$.
    As both ${A} \otimes {B}^{\opp}$ and $\End(M)$ are semisimple, Lemma~\ref{LemmaOrbit} implies that the image of this map is open and closed.
    As in the proof of \cref{PropositionLocallyTrivial}, there exists a neighborhood $U^\prime \subset U$ of $x_0$ on which we can lift $\varphi$ to a map $u: U^\prime \to \GL_0(M)$ such that $\tilde{\varphi}_x = i(u_x) \circ \tilde{\varphi}_{x_0}$.
    Then the map $u^{-1} g : \mathcal{M}|_{U^\prime} \to U^\prime \times M$ is an intertwiner along $\phi$ and $\psi$, for the bimodule action on $M$ given by $\tilde{\varphi}_{x_0}$.
\begin{comment}
  \begin{align*}
    u_x^{-1} g_x(a \lact m \ract b) &= (-1)^{|m||b|} u_x^{-1}  g_x(\varphi_x(a \otimes b)m) \\
    &= (-1)^{|m||b|} u_x^{-1}  g_x({\varphi}_x(a \otimes b)m) \\
    &= (-1)^{|m||b|} u_x^{-1}  \tilde{\varphi}_x(\phi_x(a) \otimes \psi_x(b)) g_x(m)\\
    &= (-1)^{|m||b|} \tilde{\varphi}_{x_0}(\phi_x(a) \otimes \psi_x(b)) u_x^{-1}   g_x(m)\\
    &= \phi_x(a) \lact_{x_0} u_x^{-1} g_x(m) \ract_{x_0} \psi_x(b).
  \end{align*}
\end{comment}
This provides the desired local trivialization of $\mathcal{M}$ as a bimodule.
\end{proof}

\subsection{Symmetric monoidal structures} \label{SectionSymmetricMonoidalStructures}

Our bicategory $\sAlgBdlbi_k$ of  super algebra bundles over $X$ (\cref{DefinitionPrestwoVectBdl}) is a bundle version of the bicategory $\sAlgbiimp_k$ of implemented algebras. Since
 $\sAlgbiimp_k$ is not monoidal, as seen in \cref{sec:smsonimplementedalgebras},  the bicategory $\sAlgBdlbi_k(X)$ cannot be monoidal either.

To explore this further, let us first note that the notion of tensor product of two super algebra bundles $\mathcal{A}$ and $\mathcal{B}$ over a manifold $X$ is unproblematic: 
The fiberwise tensor products $\mathcal{A}_x \otimes \mathcal{B}_x$, $x \in X$, form again a super algebra bundle $\mathcal{A} \otimes \mathcal{B}$; and if   $\mathcal{A}$ and $\mathcal{B}$ have typical fibers $A$ and $B$, then $\mathcal{A} \otimes \mathcal{B}$ has typical fiber $A \otimes B$. 
It is also unproblematic to define the exterior product of bimodule bundles: if an $\mathcal{A}$-$\mathcal{B}$-bimodule bundle $\mathcal{M}$ and an $\mathcal{A}'$-$\mathcal{B}'$-bimodule bundle $\mathcal{M}'$ have typical fibres $(A,M,B)$ and $(A',B',M')$, respectively, then $\mathcal{M} \otimes \mathcal{M}'$ has typical fibre $(A\otimes A',M \otimes M',B \otimes B')$. 
However, as seen in \cref{sec:smsonimplementedalgebras} (the bimodule from \cref{ExampleImplementing3} is a non-implementing  exterior product of two implementing bimodules), the bimodule bundle $\mathcal{M} \otimes \mathcal{M}'$ is not necessarily implementing. 
Since this is a purely fibrewise problem, the solution in \cref{sec:smsonimplementedalgebras} also works in the bundle version, and we have the following result.

\begin{proposition}
\label{prop:symmetricmonoidalstructures}
The sub-bicategory  $\sssAlgBdlbi_k(X)$ and the sub-bigroupoid $\sAlgBdlbigrpd kX$ of $\sAlgBdlbi_k(X)$ are symmetric monoidal.
\end{proposition}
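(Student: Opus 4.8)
The plan is to build the symmetric monoidal structure on each of the two (bi)categories \emph{fibrewise}, by transporting the symmetric monoidal structure of $\sAlgbi_k$ from \cref{Section2VectorSpaces} into every fibre. On objects the monoidal product is the fibrewise graded tensor product $\mathcal{A}\otimes\mathcal{B}$ of super algebra bundles, which has typical fibre $A\otimes B$ when $\mathcal{A},\mathcal{B}$ have typical fibres $A,B$; on $1$-morphisms it is the exterior product $\mathcal{M}\otimes\mathcal{M}'$ of bimodule bundles, which has typical fibre $(A\otimes A',M\otimes M',B\otimes B')$; the monoidal unit is the trivial bundle $\underline{k}=X\times k$. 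The \emph{only} reason this naive recipe fails to land inside $\sAlgBdlbi_k(X)$ is that $\mathcal{M}\otimes\mathcal{M}'$ need not be implementing even when $\mathcal{M}$ and $\mathcal{M}'$ are (\cref{ExampleImplementing3}). Hence the whole point is to check that on the two sub(-bi)categories this obstruction disappears, i.e.\ that the objects and $1$-morphisms are closed under $\otimes$; everything else will be inherited from the fibres.

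For $\sssAlgBdlbi_k(X)$: the fibrewise tensor product of algebra bundles with semisimple fibres again has semisimple fibres, since the graded tensor product of semisimple super algebras is semisimple (the same fact used for $\sssAlgbi_k$ in \cref{sec:smsonimplementedalgebras}), and the unit $\underline{k}$ has the semisimple fibre $k$, so the objects are closed; and by \cref{co:ssimp} \emph{every} bimodule bundle between semisimple algebra bundles is implementing, so in particular $\mathcal{M}\otimes\mathcal{M}'$ is implementing and the $1$-morphisms are closed. For $\sAlgBdlbigrpd kX$: the objects are unrestricted and the identity bimodule bundle $\underline{k}$ of the unit is invertible; and the exterior product $\mathcal{M}\otimes\mathcal{M}'$ of invertible bimodule bundles is fibrewise invertible (the graded tensor product of invertible bimodules is invertible, by interchange of the relative and exterior tensor products), hence invertible by \cref{lem:fibrewiseinvertibility:c}, hence implementing by \cref{PropositionInvertibleImplementing}. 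In both cases one further observes that the structural $2$-cells — the components of the associator, unitors, braiding, syllepsis and the coherence modifications of a symmetric monoidal bicategory in the sense of \cite[Def.~2.3]{pries1} — are in each fibre the corresponding \emph{invertible} intertwiners coming from the symmetric monoidal structure on $\sAlgbi_k$, so once they are known to be smooth bundle maps they are invertible $2$-morphisms by \cref{lem:fibrewiseinvertibility:a}; in the bigroupoid case all $2$-morphisms in sight are moreover fibrewise invertible, as the bigroupoid requires.

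It then remains to verify that the fibrewise data genuinely assemble: that $\otimes$ is a functor of bicategories (including its action on $2$-morphisms and its compositors), that the associator, unitors, braiding and syllepsis are pseudonatural transformations assembled from invertible modifications, and that the coherence axioms hold. Smoothness is local: over a trivialising open set one writes the exterior product and all structure cells in terms of typical fibres, where the formulas are literally those for $\sAlgbi_k$ — and, crucially, the exterior product involves no quotient (unlike the relative tensor product of \cref{PropositionDefinitionTensorProduct}), so no dimension-jump can occur and smoothness of transition functions follows exactly as in the proof of \cref{PropositionDefinitionTensorProduct}. The coherence equations (pentagon, hexagons, and the remaining axioms of \cite{pries1}) hold because they hold in every fibre, being precisely the coherence equations of the symmetric monoidal bicategory $\sAlgbi_k$. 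I expect the only genuine obstacle to be the one already isolated above, namely closure of the $1$-morphisms under the exterior product; once that is secured via \cref{co:ssimp} in the semisimple case and via \cref{PropositionInvertibleImplementing} together with \cref{lem:fibrewiseinvertibility:c} in the invertible case, the remaining work is routine fibrewise bookkeeping.
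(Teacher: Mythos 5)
Your proof is correct and follows the same route as the paper: the paper itself gives only the remark that the exterior tensor product is unproblematic fibrewise except for implementability, and that the solutions from \cref{sec:smsonimplementedalgebras} (semisimplicity via \cref{co:ssimp}, invertibility via \cref{PropositionInvertibleImplementing}) transfer to bundles because the obstruction is purely fibrewise. One small logical reordering worth fixing: to invoke \cref{lem:fibrewiseinvertibility:c} you must first know $\mathcal{M}\otimes\mathcal{M}'$ is an implementing bimodule bundle, which you get from fibrewise invertibility together with \cref{PropositionInvertibleImplementing} (a fibrewise statement); only \emph{then} does the lemma upgrade fibrewise invertibility to invertibility as a $1$-morphism, so the chain should read \textquotedblleft fibrewise invertible $\Rightarrow$ fibrewise implementing $\Rightarrow$ implementing bimodule bundle $\Rightarrow$ invertible $1$-morphism,\textquotedblright\ not the other way around.
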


\begin{remark}
One could also consider the full subcategory of $\sAlgBdlbi_k(X)$ consisting of those super algebra bundles whose typical fibers $A$ satisfy only $\mathrm{HH}^1(A) = 0$, as by \cref{PropHochschildImplementing}, all bimodules between such algebras are implementing.
Moreover, by the tensor product formula for Hochschild cohomology (see, e.g., \cite{HochschildTensorProduct}), we have
\begin{equation*}
  \mathrm{HH}^1(A \otimes B) = (\mathrm{HH}^0(A) \otimes \mathrm{HH}^1(B)) \oplus (\mathrm{HH}^1(A) \otimes \mathrm{HH}^0(B)) = 0,
\end{equation*}
provided $\mathrm{HH}^1(A) = \mathrm{HH}^1(B) = 0$.
Hence, the tensor product of objects is well defined inside this bicategory.
However, it seems to us  that the category of semisimple super algebras is more natural to consider, as (a) our notion of bimodule bundle coincides with the weaker notion of \cref{RemarkNoTypicalFiber} in this case (see \cref{prop:weakernotioncoincides}); and (b) this category admits a larger framing, as discussed in \cref{PropositionLocallyTrivial} and \cref{CorollaryFraming}.
\end{remark}

We infer the following result about the invertible objects.

\begin{proposition}
\label{lemma:invertibleobjectsinbundles}
In both symmetric monoidal bicategories, $\sAlgBdlbigrpd kX$ and $\sssAlgBdlbi_k(X)$, a super algebra bundle is invertible if and only if it is fibrewise invertible, i.e., if and only if all its fibres are central simple super algebras.
\end{proposition}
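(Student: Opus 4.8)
The plan is to reduce the statement to the fibrewise invertibility criterion \cref{lem:fibrewiseinvertibility:c} together with the classical \cref{TheoremCentralSimpleAlgebras}. The equivalence between ``fibrewise invertible'' and ``all fibres central simple'' is nothing but \cref{TheoremCentralSimpleAlgebras} applied in each fibre, so what remains is to prove that a super algebra bundle $\mathcal{A}$ is an invertible object of $\sAlgBdlbigrpd kX$ (respectively of $\sssAlgBdlbi_k(X)$) if and only if every fibre $\mathcal{A}_x$ is an invertible object of $\sAlgbi_k$.

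For the ``only if'' direction, suppose $\mathcal{A}$ is invertible, with inverse $\mathcal{B}$. This provides an equivalence, that is, an invertible $1$-morphism $\mathcal{M}\colon \mathcal{A}\otimes\mathcal{B}\to \underline{k}$, where $\underline{k}$ denotes the monoidal unit (the trivial bundle with fibre $k$). By \cref{lem:fibrewiseinvertibility:c}, $\mathcal{M}$ is fibrewise invertible, so each $\mathcal{A}_x\otimes\mathcal{B}_x$ is Morita equivalent to $k$ and hence invertible in $\sAlgbi_k$. Since a tensor factor of an invertible object of a symmetric monoidal bicategory is itself invertible, it follows that $\mathcal{A}_x$ is invertible in $\sAlgbi_k$, and therefore central simple by \cref{TheoremCentralSimpleAlgebras}.

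For the ``if'' direction, assume that every fibre $\mathcal{A}_x$ is central simple, and take the fibrewise opposite bundle $\mathcal{A}^{\opp}$ as candidate inverse. It is again a super algebra bundle, trivialized by the same local trivializations as $\mathcal{A}$ (read with the opposite multiplication on the typical fibre), and its fibres $\mathcal{A}_x^{\opp}$ are central simple, in particular semisimple, so $\mathcal{A}^{\opp}$ is an object of both $\sssAlgBdlbi_k(X)$ and $\sAlgBdlbigrpd kX$. Next I would form the evaluation bimodule bundle $\mathcal{M}\colon \mathcal{A}\otimes\mathcal{A}^{\opp}\to\underline{k}$, whose underlying vector bundle is that of $\mathcal{A}$ and whose fibre over $x$ is the evaluation bimodule of \cref{re:dualizable} associated with the central simple algebra $\mathcal{A}_x$. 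That $\mathcal{M}$ is a bimodule bundle in the sense of \cref{def:bimodulebundle} follows by observing that an algebra trivialization $\phi\colon \mathcal{A}|_U\to U\times A$ induces the trivialization $\phi\otimes\phi^{\opp}$ of $\mathcal{A}\otimes\mathcal{A}^{\opp}$, and that $\phi$ itself intertwines the bimodule structures fibrewise over $U$ precisely because each $\phi_x$ is an algebra isomorphism. Since the fibres of $\mathcal{A}\otimes\mathcal{A}^{\opp}$ and of $\underline{k}$ are semisimple, $\mathcal{M}$ is implementing by \cref{co:ssimp}, so it is a legitimate $1$-morphism in both bicategories. Because $\mathcal{A}_x$ is central simple, \cref{TheoremCentralSimpleAlgebras} and \cref{re:dualizable} show that $\mathcal{M}_x$ is an invertible bimodule exhibiting $\mathcal{A}_x^{\opp}$ as an inverse of $\mathcal{A}_x$; hence, by \cref{lem:fibrewiseinvertibility:c}, $\mathcal{M}$ is an equivalence $\mathcal{A}\otimes\mathcal{A}^{\opp}\simeq\underline{k}$. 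Using the braiding of the symmetric monoidal structure (\cref{prop:symmetricmonoidalstructures}) we also obtain $\mathcal{A}^{\opp}\otimes\mathcal{A}\simeq\mathcal{A}\otimes\mathcal{A}^{\opp}\simeq\underline{k}$, so $\mathcal{A}^{\opp}$ is an inverse of $\mathcal{A}$ and $\mathcal{A}$ is invertible.

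I expect no deep obstacle: the argument is essentially fibrewise. The one point requiring genuine care is the verification that the evaluation bimodule bundle is locally trivial in the sense of \cref{def:bimodulebundle}, and this is settled by the remark that a single local trivialization of $\mathcal{A}$ simultaneously and compatibly trivializes $\mathcal{A}$, $\mathcal{A}^{\opp}$, $\mathcal{A}\otimes\mathcal{A}^{\opp}$ and $\underline{k}$. Beyond that it is only a matter of unwinding ``invertible object'' into the existence of equivalences, so that the fibrewise criterion \cref{lem:fibrewiseinvertibility:c} can be applied, and then invoking the classical structure theory of \cref{TheoremCentralSimpleAlgebras} fibre by fibre.
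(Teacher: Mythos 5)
Your proof is correct and follows essentially the same route as the paper: reduce to the fibrewise criterion \cref{lem:fibrewiseinvertibility:c}, invoke \cref{TheoremCentralSimpleAlgebras} fibre by fibre, and use the fibrewise opposite bundle $\mathcal{A}^{\opp}$ with the evaluation bimodule bundle (whose underlying vector bundle is $\mathcal{A}$) as the witness of invertibility. You spell out two points that the paper compresses -- the ``only if'' step via the tensor-factor argument, and the local triviality of the evaluation bimodule bundle -- but these are refinements of the same argument, not a different one.
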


\begin{proof}
It is clear that if a super algebra bundle $\mathcal{A}$ is invertible, then its fibres must be invertible super algebras. 
Conversely, if $\mathcal{A}$ is a super algebra bundle with central simple fibres, then taking fibrewise opposite algebras results in a super algebra bundle $\mathcal{A}^{\opp}$. 
For each $x \in X$,  $\mathcal{A}_x \otimes \mathcal{A}_x^{\opp}$ is Morita equivalent to $k$, \ie $\mathcal{A}^{\opp}_x$ is an inverse to $\mathcal{A}_x$.
The bimodules implementing the Morita equivalence
%that render the central simple fibres of $\mathcal{A}$ inverse to the ones of $\mathcal{A}^{\opp}$, 
form bimodule bundles (both have $\mathcal{A}$ as their underlying vector bundle). As they are fibrewise invertible, they are invertible as bundles, by \cref{lem:fibrewiseinvertibility:c}.
This shows that $\mathcal{A}^{\opp}$ is an inverse of $\mathcal{A}$ with respect to the tensor product. 
\end{proof}

\begin{corollary}
The following table describes the dualizable, fully dualizable, and invertible objects in both symmetric monoidal categories of super algebra bundles: 
\begin{center}
\begin{tabular}{l|c|c|c}
 & dualizable & fully dualizable & invertible \\\hline
$\sAlgBdlbigrpd kX$ & central simple & central simple & central simple \\
$\sssAlgBdlbi_k(X)$ & all & all & central simple \\
\end{tabular}
\end{center}
\end{corollary}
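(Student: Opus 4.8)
The plan is to treat the three columns by reducing everything to fibrewise statements that are already available—\cref{re:dualizable} and \cref{TheoremCentralSimpleAlgebras} for the behaviour of algebras, the analogous table for the three bicategories of super algebras proved in \cref{sec:smsonimplementedalgebras}—together with the two bundle-theoretic transfer principles \cref{lem:fibrewiseinvertibility} and \cref{lemma:invertibleobjectsinbundles}. The \emph{invertible} column of both rows is literally \cref{lemma:invertibleobjectsinbundles}, so only dualizability and full dualizability remain to be discussed.

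For the bigroupoid $\sAlgBdlbigrpd kX$, the first step is to note that in a bigroupoid every $1$-morphism is an equivalence. Hence if an object $\mathcal{A}$ is dualizable with dual $\mathcal{A}^{\vee}$, the evaluation $1$-morphism $\mathcal{A}\otimes\mathcal{A}^{\vee}\to\unit$ is an equivalence, so $\mathcal{A}\otimes\mathcal{A}^{\vee}\cong\unit$ and $\mathcal{A}$ is invertible; by \cref{lemma:invertibleobjectsinbundles} its fibres are central simple. Conversely, an invertible object of a symmetric monoidal bicategory is automatically fully dualizable—its evaluation and coevaluation are equivalences and therefore have adjoints—and central-simple-fibred algebra bundles are invertible by \cref{lemma:invertibleobjectsinbundles}. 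This closes the chain ``central simple $\Rightarrow$ invertible $\Rightarrow$ fully dualizable $\Rightarrow$ dualizable $\Rightarrow$ central simple'', so all three entries of the first row coincide.

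For $\sssAlgBdlbi_k(X)$ it suffices, as in the corresponding proposition for algebras, to show that every object is fully dualizable. Given a semisimple algebra bundle $\mathcal{A}$, its fibrewise opposite $\mathcal{A}^{\opp}$ is again a semisimple algebra bundle, as is $\mathcal{A}\otimes\mathcal{A}^{\opp}$ (tensor products and opposites of semisimple algebras over $\R$ or $\C$ are semisimple). The candidate evaluation and coevaluation bimodule bundles both have $\mathcal{A}$ as their underlying vector bundle; they are locally trivial—using the algebra-bundle trivialisations of $\mathcal{A}$, exactly as for $\mathcal{B}_\varphi$ in \cref{ExampleTwistedModuleIso}—and they are implementing by \cref{co:ssimp} because all fibres in sight are semisimple, so they are genuine $1$-morphisms of $\sssAlgBdlbi_k(X)$. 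The zigzag identities are verified fibrewise from \cref{re:dualizable}; the $2$-morphisms witnessing them are intertwiners which, read in local trivialisations, are the constant, point-independent standard zigzag intertwiners of $\sAlgbi_k$, hence smooth (equivalently, one invokes fibrewise invertibility together with the uniqueness argument from the proof of \cref{lem:fibrewiseinvertibility}). Finally, full dualizability additionally requires the evaluation and coevaluation bimodule bundles to admit left and right adjoints; since these are implementing bimodule bundles whose fibres admit such adjoints (by full dualizability of semisimple algebras, \cref{re:dualizable}), \cref{lem:fibrewiseinvertibility:b} produces the adjoints at the level of bundles. The remaining two entries of the second row then follow as in the algebra case.

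I expect the principal obstacle to be bookkeeping rather than anything conceptual: one must check that the fibrewise duality and adjunction data genuinely assemble into smooth bundle morphisms. The cleanest route is to observe that all this data is built functorially out of $\mathcal{A}$ and its trivialisations, so that in any local trivialisation the relevant units, counits and zigzag $2$-morphisms are constant and therefore smooth, while for adjoints of $1$-morphisms one appeals directly to \cref{lem:fibrewiseinvertibility}, whose proof already supplies the ``uniqueness forces constancy'' mechanism. One also has to keep every bimodule bundle that occurs (evaluations, coevaluations, and their adjoints) inside the implementing—here equivalently, semisimple-fibred—world so that \cref{lem:fibrewiseinvertibility} applies, which is automatic by the closure of semisimplicity under tensor product and opposite noted above.
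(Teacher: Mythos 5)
Your argument is correct and follows the same route as the paper: for the bigroupoid row you deduce invertibility from the fact that evaluation is automatically an equivalence, and then identify invertible bundles with central-simple-fibred ones via \cref{lemma:invertibleobjectsinbundles}; for the semisimple row you reproduce the construction from the proof of \cref{lemma:invertibleobjectsinbundles} (fibrewise opposite algebra bundle, evaluation/coevaluation bimodule bundles with $\mathcal{A}$ as underlying vector bundle) and appeal to \cref{lem:fibrewiseinvertibility:b} for the adjoints required by full dualizability. Your additional discussion of why the zigzag $2$-cells are smooth, via the "uniqueness of adjunction data forces constancy in trivialisations" mechanism, is a reasonable fleshing-out of what the paper's one-line proof leaves implicit.
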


\begin{proof}
In the first line, being dualizable or fully dualizable implies already fibrewise invertibility. In the second line, the argument in the proof of \cref{lemma:invertibleobjectsinbundles} in combination with \cref{lem:fibrewiseinvertibility:b} shows that every semisimple super algebra bundle is fully dualizable. 
\end{proof}

In order to discuss the invertible super algebra bundles, we introduce the full sub-bicategory
\begin{equation}
\cssAlgBdlbi_k(X) \subset \sssAlgBdlbi_k(X) \subset \sAlgBdlbi_k(X)
\end{equation}
over all central simple super algebra bundles, which is then symmetric monoidal. 
A result of Donovan-Karoubi \cite[Theorems 6 and 11]{DK70} shows the following.

\begin{theorem}
\label{th:classalg}
There is a canonical bijection
\begin{equation*}
\mathrm{h}_0(\cssAlgBdlbi_k(X)) \cong {\mathrm{H}}^0(X,\mathrm{BW}_k) \times  {\mathrm{H}}^1(X,\Z_2) \times \mathrm{Tor}(\check{\mathrm{H}}^2(X,\underline{k}^{*}))\text{,}
\end{equation*}
where $\check{\mathrm{H}}^2$ denotes  \v{C}ech cohomology, and $\underline{k}^{*}$ is the sheaf of smooth $k^{*}$-valued functions. Moreover, this bijection becomes an isomorphism of groups upon defining the group structure on the right hand side by 
\begin{equation} \label{GroupStructureDonovanKaroubi}
(\alpha_0,\alpha_1,\alpha_2) \cdot (\beta_0,\beta_1,\beta_2) := (\alpha_0+\beta_0,\alpha_1+\beta_1,(-1)^{\alpha_1\cup\beta_1}\alpha_2\beta_2)\text{.}
\end{equation}
\end{theorem}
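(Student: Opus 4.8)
The goal is to identify the set $\mathrm{h}_0(\cssAlgBdlbi_k(X))$ of isomorphism classes of central simple super algebra bundles over $X$ with the displayed product of cohomology groups, and then to check that the group structure coming from the tensor product of algebra bundles matches \eqref{GroupStructureDonovanKaroubi}. The key observation is that a central simple super algebra bundle is, up to Morita equivalence, classified by a cocycle valued in the automorphism $2$-group of a typical fibre, and by \cref{prop:classcssa} this $2$-group has a very simple butterfly model: it is weakly equivalent to $\mathscr{B}k^{\times} \times (\Z_2)_{dis}$. Combined with the fact (\cref{lemma:invertibleobjectsinbundles}, \cref{lem:fibrewiseinvertibility:c}) that an algebra bundle with central simple fibres is invertible, and that invertibility of bimodule bundles is fibrewise, one expects the classifying space of $\cssAlgBdlbi_k(X)$ to be built from three ingredients: the locally constant sheaf $\mathrm{BW}_k$ recording the Morita class (fibrewise), the sheaf $\underline{\Z}_2$ recording the grading ambiguity ($\mathrm{Pic}(A) = \Z_2$, see \cref{re:classbimodcsa}), and the sheaf $\underline{k}^{*}$ recording the $k^\times$ of central units ($Z(A)_0^\times = k^\times$ in the central simple case).

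\textbf{Step 1: reduce to the Donovan–Karoubi computation.} First I would invoke the theorem of Donovan–Karoubi \cite[Theorems 6 and 11]{DK70} directly: their result classifies (graded) central simple algebra bundles — equivalently, ``twisted'' bundles of matrix algebras, i.e. the objects of $\cssAlgBdlbi_k(X)$ up to isomorphism — by exactly the stated data. So the substantive work is (a) matching our bicategorical $\mathrm{h}_0$ with the set Donovan–Karoubi classify, and (b) matching the group laws. For (a), I would argue that two central simple super algebra bundles $\mathcal{A}, \mathcal{A}'$ are isomorphic in $\cssAlgBdlbi_k(X)$ — i.e. there is an invertible implementing bimodule bundle between them — if and only if they are isomorphic as the bundles-of-algebras Donovan–Karoubi consider. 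The ``only if'' direction uses \cref{lem:fibrewiseinvertibility} to pass an invertible bimodule bundle to local bimodule-bundle isomorphisms; the ``if'' direction is a direct comparison of the underlying classifying cocycles. Here \cref{prop:aut} and \cref{prop:classcssa} are the technical engine: the automorphism $2$-group of each fibre is $\AUT(A) \cong \mathscr{B}k^{\times} \times (\Z_2)_{dis}$, so the nonabelian $2$-cocycle classifying a central simple super algebra bundle over a connected piece of $X$ decomposes (after choosing a Morita class, which is locally constant, hence an element of $\h^0(X,\mathrm{BW}_k)$) into a $\Z_2$-valued $1$-cocycle and a $k^{*}$-valued $2$-cochain whose coboundary is controlled by the $1$-cocycle — precisely the ``twisted $2$-cocycle'' data that Donovan–Karoubi turn into $(\alpha_0,\alpha_1,\alpha_2)$ with $\alpha_2 \in \mathrm{Tor}(\check{\mathrm{H}}^2(X,\underline{k}^{*}))$. (The torsion restriction is built into the finite-dimensionality of the fibres.)

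\textbf{Step 2: identify the group structure.} Next I would compute the effect of the fibrewise tensor product $\mathcal{A} \otimes \mathcal{B}$ on classifying data. On the Morita-class component this is the Brauer–Wall addition $\alpha_0 + \beta_0$ by definition of $\mathrm{BW}_k = \hc 0 \cssAlgbi_k$. On the $\Z_2$-component: the grading operator of $\mathcal{A} \otimes \mathcal{B}$ is the product of the grading operators, so the cocycles add, $\alpha_1 + \beta_1$. On the $k^{*}$-component one gets $\alpha_2 \beta_2$, but — and this is the one genuinely delicate point — the graded tensor product of super algebras introduces Koszul signs when one reassembles the local trivializations, and these signs are exactly governed by the cup product of the two $\Z_2$-degree cocycles; hence the twist $(-1)^{\alpha_1 \cup \beta_1}$ in \eqref{GroupStructureDonovanKaroubi}. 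I would make this precise by writing the compositor/associator $2$-cocycle for the monoidal structure on $\sssAlgBdlbi_k(X)$ (equivalently, unwinding the sign in the exterior-tensor-product bimodule structure recorded in the commented-out display after \cref{lem:Pi}) and reading off that the correction term is $(-1)^{|\cdot||\cdot|}$ summed over the overlap data.

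\textbf{Main obstacle.} The hard part is not any single cohomological identification but the careful bookkeeping in Step 1 of how the bicategorical notion of isomorphism (invertible \emph{implementing} bimodule bundle) relates to Donovan–Karoubi's notion, and in Step 2 of tracking the Koszul signs through the graded tensor product so that the $(-1)^{\alpha_1\cup\beta_1}$ factor emerges with the correct sign convention. Since \cref{co:ssimp} guarantees all bimodules between (semi)simple — in particular central simple — super algebras are implementing, the word ``implementing'' imposes no restriction here, which removes what would otherwise be the thorniest issue; what remains is a somewhat lengthy but essentially formal cocycle computation, and I would present it by reducing, via \cref{prop:classcssa}, to the butterfly model $\mathscr{B}k^{\times}\times(\Z_2)_{dis}$ and citing \cite{DK70} for the identification of the resulting $2$-gerbe-like data with the stated cohomology groups.
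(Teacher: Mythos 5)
Your proposal is correct and follows essentially the same route as the paper: both reduce the theorem to Donovan--Karoubi \cite{DK70} and then explain where the three cohomology factors come from by unwinding local trivialization data. The main stylistic difference is one of packaging. The paper's proof sketch works directly with the local trivialization cocycles $\varphi_{\alpha\beta}: U_\alpha\cap U_\beta \to \Aut(A)$ for the algebra bundle, observing that on the level of twisted bimodules $A_{\varphi_{\alpha\beta}}$ the classification by $\mathrm{Pic}(A)=\Z_2$ produces $\varepsilon_{\alpha\beta}$ and $f_{\alpha\beta}$, whose failure of the cocycle condition yields $x_{\alpha\beta\gamma}\in k^\times$. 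You instead route through the butterfly model $\AUT(A)\cong \mathscr{B}k^\times\times(\Z_2)_{dis}$ of \cref{prop:classcssa}, regarding the algebra bundle (up to Morita) as a nonabelian cocycle valued in $\AUT(A)^\delta$ and using the weak equivalence to split this cocycle into a $\Z_2$-valued $1$-cocycle and a $k^\times$-valued $2$-cochain. These are the same computation in different clothing; the paper's version is slightly more elementary, while yours makes the conceptual source of the decomposition clearer. Two things you add that the paper's sketch omits entirely: a discussion of why bicategorical isomorphism (invertible implementing bimodule bundle) agrees with the Donovan--Karoubi equivalence relation, and a verification of the group structure \eqref{GroupStructureDonovanKaroubi}. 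The first is a reasonable thing to spell out, though you should be careful to phrase it as agreement with Donovan--Karoubi's Brauer-type equivalence (tensoring with endomorphism bundles) rather than with algebra-bundle isomorphism, since the two are distinct. The second is genuinely useful; the cup-product twist arising from Koszul signs in the graded tensor product is exactly what Donovan--Karoubi verify, and your sketch of how it emerges from the sign in the exterior tensor product bimodule structure is the right idea. Overall no gap, just a more verbose rendering of the same argument.
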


\begin{remark}
\begin{enumerate}[(1)]

\item
The cup product is viewed here as a map
\begin{equation*}
{\mathrm{H}}^1(X,\Z_2) \times {\mathrm{H}}^1(X,\Z_2) \to {\mathrm{H}}^2(X,\Z_2) \to \mathrm{Tor}(\check{\mathrm{H}}^2(X,\underline{k}^{*})) 
\end{equation*} 
with the second arrow induced by the inclusion $\Z_2 \to k^{*}: x \mapsto (-1)^{x}$. 

\item 
\label{re:torsion}
We have $\mathrm{Tor}(\check{\mathrm{H}}^2(X,\underline{\C}^{*}))=\mathrm{Tor}(\mathrm{H}^3(X,\Z))$ and $\mathrm{Tor}(\check{\mathrm{H}}^2(X,\underline{\R}^{*}))=\mathrm{H}^2(X,\Z_2)$.

\end{enumerate}
\end{remark}

\begin{proof}[Proof (sketch)]
Let us briefly recall how the classification in \cref{th:classalg} comes about. 
Given a super algebra bundle $\mathcal{A}$ over $X$, one chooses an open cover $\{U_{\alpha}\}$ of $X$ admitting local trivializations $\varphi_{\alpha}:\mathcal{A}|_{U_{\alpha}} \to X \times A_{\alpha}$. 
As mentioned above, the typical fibres $A_{\alpha}$ can be assumed to be constant on the connected components of $X$; this defines the class in ${\mathrm{H}}^0(X,\mathrm{BW}_k)$. 
On a double intersection, the local trivializations determine a smooth map $\varphi_{\alpha\beta}:U_{\alpha}\cap U_{\beta} \to \mathrm{Aut}(A)$, where $A=A_{\alpha}=A_{\beta}$. 
At $x\in U_{\alpha}\cap U_{\beta}$, we obtain the irreducible $A$-$A$-bimodule $A_{\varphi_{\alpha\beta}(x)}$. Since $A$ is central and simple, we have either $A_{\varphi_{\alpha\beta}(x)}\cong A$ or $A_{\varphi_{\alpha\beta}(x)}\cong \Pi A$ (\cref{re:classbimodcsa}). 
Thus,  there exist $\varepsilon_{\alpha\beta}\in \Z_2$ and smooth maps $f_{\alpha\beta}:U_{\alpha}\cap U_{\beta} \to \underline{\mathrm{GL}}(A)$ such that $f_{\alpha\beta}(x): \varepsilon_{\alpha\beta}A \to A_{\varphi_{\alpha\beta}(x)}$ is  an invertible even intertwiner. 
The cocycle condition $\varphi_{\alpha\gamma}=\varphi_{\beta\gamma}\circ \varphi_{\alpha\beta}$ implies $\varepsilon_{\alpha\gamma}=\varepsilon_{\alpha\beta} + \varepsilon_{\beta\gamma}$; thus, $\varepsilon_{\alpha\beta}$ is a \v Cech 1-cocycle defining the class in ${\mathrm{H}}^1(X,\Z_2)$. 
Finally, the same cocycle condition implies that $f_{\alpha\gamma}$ and $f_{\beta\gamma}\circ f_{\alpha\beta}$  differ by a unique smooth scalar function $x_{\alpha\beta\gamma}:U_{\alpha}\cap U_{\beta}\cap U_{\gamma} \to k^{\times}$, which in turn is a \v Cech 2-cocycle defining the class in $\check{\mathrm{H}}^2(X,\underline{k}^{*})$. 
\end{proof}

\begin{remark}
Both symmetric monoidal bicategories $\sAlgBdlbigrpd kX$ and $\sssAlgBdlbi_k(X)$ have a second symmetric monoidal structure given by the direct sum of algebra bundles, and the exterior direct sum of bimodule bundles. The two monoidal structures are compatible with each other in the sense of distributive laws, and form a \emph{commutative rig bicategory}.
\end{remark}

\begin{remark}
In the ungraded case, we consider the sub-bicategories $\AlgBdlbigrpd kX$ and $\ssAlgBdlbi_k(X)$.  They are symmetric monoidal and framed by $\AlgBdlgrpd kX$ and $\ssAlgBdl_k(X)$, respectively. 
The invertible objects form a full sub-bicategory $\csAlgBdlbi_k(X)$.   The ungraded version of \cref{th:classalg} was also proved by Donovan-Karoubi \cite[Theorems 3 and 8]{DK70}, and gives  a group isomorphism
\begin{equation*}
\mathrm{h}_0(\csAlgBdlbi_k(X)) \cong  \mathrm{H}^0(X,\mathrm{Br}_k)\times \mathrm{Tor}(\check{\mathrm{H}}^2(X,\underline{k}^{*}))\text{,}
\end{equation*}  
with the direct product group structure on the right hand side.
\end{remark}

\subsection{Pre-2-stacks of  algebra bundles}
\label{sec:2stack2vec}

So far we have worked over a fixed smooth manifold $X$. In order to consider all smooth manifolds at the same time, we use the notion of a presheaf of (bi-)categories on the category $\Mfd$ of smooth manifolds.
For example, super algebra bundles and ordinary super algebra bundle homomorphisms form a presheaf
\begin{equation*}
X \mapsto \sAlgBdl_k(X)
\end{equation*}
of symmetric monoidal categories, which we will  denote by $\sAlgBdlbi_k$. 

Questions of descent are discussed w.r.t.~a Grothendieck topology on $\Mfd$, turning it into a \emph{site}. In our case, it will be the Grothendieck topology of surjective submersions. To any presheaf of categories $\mathscr{F}$ and any surjective submersion $\pi:Y \to X$ one associates a category $\Des_{\mathscr{F}}(\pi)$ of \emph{descent data}, in which the objects are pairs $(\mathcal{A},\phi)$ of an object $\mathcal{A}$ of $\mathscr{F}(Y)$ and an isomorphism $\phi:\pr_2^{*}\mathcal{A} \to \pr_1^{*}\mathcal{A}$ over $Y^{[2]}=Y \times_X Y$ satisfying the cocycle condition $\pr_{23}^{*}\phi \circ \pr_{12}^{*}\phi = \pr_{13}^{*}\phi$. Pullback along $\pi$ establishes a functor
\begin{equation}
\label{eq:functorR}
R_{\pi}:\mathscr{F}(X) \to \Des_{\mathscr{F}}(\pi)\text{.}
\end{equation}
We recall that $\mathscr{F}$ is called \emph{prestack} if $R_{\pi}$ is full and faithful for all surjective submersions $\pi$, and it is called \emph{stack} if $R_{\pi}$ is an equivalence of categories. It is well-known and straightforward to show that the presheaf $\sAlgBdl_k$ is a stack on the site $\Mfd$.

Similarly,
\begin{equation*}
X\mapsto \sAlgBdlbi_k(X)
\end{equation*}
forms a \emph{presheaf of bicategories} over $\Mfd$, which we denote by $\sAlgBdlbi_k$. It has three important sub-presheaves:
\begin{enumerate}

\item 
The sub-presheaf $\sAlgBdlbigrpd k-$ where only invertible bimodule bundles and invertible intertwiners are admitted. This is a presheaf of symmetric monoidal framed bigroupoids. 

\item
The sub-presheaf $\sssAlgBdlbi_k$ where only the semisimple super algebra bundles are admitted. This is a presheaf of symmetric monoidal framed bicategories. 

\item
The further sub-presheaf $\cssAlgBdlbi_k$, where only the central simple super algebra bundles are admitted.   

\end{enumerate}
Moreover, we have ungraded versions of all these four presheaves, and thus a collection of not less than eight presheaves of bicategories.

For presheaves of \emph{bi}categories $\mathscr{F}$, there is a analogous \emph{bi}category $\Des_{\mathscr{F}}(\pi)$ of descent data, and a corresponding functor $R_{\pi}$ as in \cref{eq:functorR}. Then, $\mathscr{F}$ is called \emph{pre-2-stack} if the functor $R_{\pi}$ is full and faithful, i.e., if it induces, for all objects $\mathcal{A},\mathcal{B}$ of $\mathscr{F}(X)$, an equivalence
\begin{equation*}
\Homcat_{\mathscr{F}(X)}(\mathcal{A},\mathcal{B}) \cong \Homcat_{\Des_{\mathscr{F}}(\pi)}(R_{\pi}(\mathcal{A}),R_{\pi}(\mathcal{B}))\text{.} 
\end{equation*} 
Moreover, $\mathscr{F}$ is called \emph{2-stack} if $R_{\pi}$ is an equivalence for all surjective submersions $\pi$. For a general proper discussion of  (pre-)2-stacks we refer to \cite{nikolaus2}.

\begin{proposition}
\label{lem:pre2stack}
All eight presheaves of bicategories of algebras collected above  
are pre-2-stacks.
\end{proposition}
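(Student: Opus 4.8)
The plan is to verify the pre-2-stack property — full faithfulness of the descent functor $R_\pi$ on Hom-categories — for each of the eight presheaves. The key observation is that this is really a statement about bimodule bundles and their intertwiners, not about the algebra bundles themselves, so the cases are not eight genuinely different problems: the sub-presheaves $\sssAlgBdlbi_k$, $\sAlgBdlbigrpd k-$, $\cssAlgBdlbi_k$ and their ungraded analogues are full sub-presheaves (or sub-bigroupoids) of $\sAlgBdlbi_k$, so once full faithfulness of $R_\pi$ is established for $\sAlgBdlbi_k$, restriction to a full sub-bicategory on both sides yields the statement for the sub-presheaves automatically. Thus the real content is the single claim: for a surjective submersion $\pi: Y \to X$ and implementing bimodule bundles $\mathcal{M}, \mathcal{M}'$ over $X$ (say $\mathcal{A}$-$\mathcal{B}$-bimodule bundles), the functor
\begin{equation*}
R_\pi: \Homcat_{\sBimodBdlimp_{\mathcal{A},\mathcal{B}}(X)}(\mathcal{M},\mathcal{M}') \to \Homcat_{\Des}(R_\pi\mathcal{M}, R_\pi\mathcal{M}')
\end{equation*}
is an equivalence of categories — in fact, since these Hom-categories have only intertwiners as morphisms and there is nothing above the 1-morphism level, the Hom-category of descent data is the category of descent data for the underlying sheaf of intertwiners, and what we must show is that (i) an intertwiner on $Y$ compatible with the descent isomorphisms glues uniquely to an intertwiner on $X$ (full faithfulness at the level of 2-morphisms), and (ii) a bimodule bundle on $Y$ equipped with descent data is isomorphic in the descent bicategory to the pullback of a bimodule bundle on $X$ — but for the pre-2-stack property we only need faithfulness and fullness of $R_\pi$ on Hom-categories, i.e. (i) together with the statement that a descent datum between pullbacks, i.e. a compatible \emph{isomorphism} $\pr_2^*\mathcal{M} \to \pr_1^*\mathcal{M}$... actually no: full faithfulness of $R_\pi$ on $\Homcat(\mathcal{M},\mathcal{M}')$ just means every morphism in $\Homcat_{\Des}(R_\pi\mathcal{M},R_\pi\mathcal{M}')$ and object thereof comes uniquely from one over $X$, which reduces to the descent statement for the sheaf $U \mapsto \{\text{intertwiners } \mathcal{M}|_U \to \mathcal{M}'|_U\}$.

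\textbf{First I would} reduce to the fact that the presheaf $\sAlgBdl_k$ of ordinary algebra bundles and homomorphisms is a stack — which the excerpt states is well-known and straightforward — and, more to the point, that for fixed algebra bundles $\mathcal{A},\mathcal{B}$ over $X$ the presheaf $U \mapsto \sBimodBdl_{\mathcal{A}|_U,\mathcal{B}|_U}(U)$ is a stack of categories. This follows from the general principle that vector bundles with extra fibrewise linear structure (here, a fibrewise $\mathcal{A}$-$\mathcal{B}$-bimodule structure) form a stack on the site of surjective submersions: given a bimodule bundle on $Y$ together with a descent isomorphism over $Y^{[2]}$ satisfying the cocycle condition, the underlying vector bundle descends to a vector bundle $\mathcal{M}$ on $X$ by ordinary vector bundle descent, and the $\mathcal{A}$- and $\mathcal{B}$-actions, being $\pi^*$-compatible, descend as well; local triviality of the result is inherited because $\pi$ is a surjective submersion and hence admits local sections, so a local trivialization over $U \subseteq X$ is obtained by pulling back a local section $s: U \to Y$ and composing with a local trivialization of the bimodule bundle on $Y$. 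Descent of intertwiners (2-morphisms) is the easy sheaf-gluing statement: an intertwiner is a smooth section of a vector bundle $\underline{\mathrm{Hom}}(\mathcal{M},\mathcal{M}')$ cut out by linear equations, and sections of vector bundles form a sheaf.

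\textbf{The one genuinely new point} — and the step I expect to be the main obstacle — is that all of this must stay inside the sub-bicategory of \emph{implementing} bimodule bundles. Here the crucial fact is that being implementing is a \emph{fibrewise} condition (\cref{def:implementingbimodulebundle}): $\mathcal{M}$ is implementing iff each fibre $\mathcal{M}_x$ is, and by \cref{RemarkTwistedModulesImplementing} this only depends on the isomorphism class of the fibre in $\Impcat_k$. Therefore: if a bimodule bundle $\widetilde{\mathcal{M}}$ on $Y$ with descent data descends to $\mathcal{M}$ on $X$, then for each $x \in X$, picking $y \in Y$ with $\pi(y)=x$, the fibre $\mathcal{M}_x$ is isomorphic (as an object of $\Impcat_k$) to $\widetilde{\mathcal{M}}_y$, so $\mathcal{M}_x$ is implementing iff $\widetilde{\mathcal{M}}_y$ is; hence $\mathcal{M}$ is implementing iff $\widetilde{\mathcal{M}}$ is. This is exactly what is needed to ensure the descended object and the descent data land in $\sAlgBdlbi_k$ rather than merely in $\sBimodBdl$. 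Since for the pre-2-stack property (as opposed to the 2-stack property) we only need $R_\pi$ full and faithful on Hom-categories — i.e. the \emph{descent-of-intertwiners} statement and the statement that descent \emph{isomorphisms} between pullbacks descend — and intertwiners between implementing bundles are simply intertwiners (there is no restriction on 2-morphisms, only on 1-morphisms), the implementing condition never obstructs anything: a 2-morphism over $Y^{[2]}$ satisfying the cocycle condition glues to a 2-morphism over $X$ by the sheaf property, with no side condition to check. So the proof is: (1) vector bundle descent gives the underlying bundle; (2) algebra-action descent (stackiness of $\sAlgBdl_k$) gives the bimodule structure; (3) existence of local sections of $\pi$ gives local triviality; (4) the fibrewise characterization of "implementing" plus \cref{RemarkTwistedModulesImplementing} shows the result is implementing; (5) the sheaf property of sections of $\underline{\mathrm{Hom}}$-bundles gives full faithfulness on 2-morphisms; (6) restrict to full sub-bicategories to cover the other seven presheaves, noting that for the bigroupoid variants one additionally invokes \cref{lem:fibrewiseinvertibility} to see that fibrewise-invertible descends to invertible. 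The subtle book-keeping is entirely in step (4), but given the fibrewise nature of the definition it is short.
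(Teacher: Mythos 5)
Your final plan — (1) descend the underlying vector bundle, (2) descend the fibrewise bimodule actions, (3) recover local triviality via local sections of $\pi$, (4) observe that the implementing condition is fibrewise and hence automatically preserved, (5) use the sheaf property of $\underline{\Hom}$-sections for the 2-morphism level, (6) handle the remaining seven presheaves by restriction, invoking \cref{lem:fibrewiseinvertibility} for the non-full bigroupoid variants — is precisely the paper's argument, modulo the paper's cosmetic reduction of $\pi$ to an open cover. One small caution: the meandering passage in the middle momentarily conflates the hom-\emph{set} $\Homcat_{\sBimodBdlimp_{\mathcal{A},\mathcal{B}}(X)}(\mathcal{M},\mathcal{M}')$ with the hom-\emph{category} $\Homcat_{\sAlgBdlbi_k(X)}(\mathcal{A},\mathcal{B})$ whose equivalence with the descent category is what the pre-2-stack property actually asks for; this is only a notational slip, since steps (1)--(5) of your summary correctly address both the objects (descent of bimodule bundles, i.e.\ essential surjectivity of $R_\pi$ on hom-categories) and the morphisms (descent of intertwiners, i.e.\ full faithfulness), but the slip would mislead a reader into thinking only the intertwiner-level descent is required.
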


\begin{proof}
We perform the proof in case of the presheaf $\mathscr{F}:=\AlgBdlbi_{k}$ of algebra bundles, the other cases are analogous. 
We also assume without loss of generality that the surjective submersion $\pi:Y \to M$ if of the form where $Y$
is the disjoint union of open sets $U_{\alpha}$ that cover $X$.  
Now consider two algebra bundles $\mathcal{A}$ and $\mathcal{B}$ over $X$. 
An object in the category $\Homcat_{\Des_{\mathscr{F}}(\pi)}(R_{\pi}(\mathcal{A}),R_{\pi}(\mathcal{B}))$ is  a collection $\mathcal{M}_{\alpha}$ of implementing $\mathcal{A}|_{U_{\alpha}}$-$\mathcal{B}|_{U_{\beta}}$-bimodule bundles, together with a collection $\phi_{\alpha\beta}: \mathcal{M}_{\alpha} \to \mathcal{M}_{\beta}$ of invertible intertwiners over the intersections $U_{\alpha}\cap U_{\beta}$, satisfying the cocycle condition over triple intersections. A morphism $(\mathcal{M}_{\alpha},\phi_{\alpha\beta}) \to (\mathcal{M}'_{\alpha},\phi'_{\alpha\beta})$ is a collection $\psi_{\alpha}: \mathcal{M}_{\alpha} \to \mathcal{M}'_{\alpha}$ of intertwiners such that 
\begin{equation*}
\phi'_{\alpha\beta} \circ \psi_{\alpha} = \psi_{\beta} \circ \phi_{\alpha\beta}\text{.}
\end{equation*} 
The functor $R_{\pi}$ induces the functor
\begin{equation*}
R_\pi(\mathcal{A},\mathcal{B}): \sBimodBdlimp_{\mathcal{A},\mathcal{B}}(X) \rightarrow \Homcat_{\Des_{\mathscr{F}}(\pi)}(R_{\pi}(\mathcal{A}),R_{\pi}(\mathcal{B}))\text{,}
\end{equation*}
which sends a globally defined implementing $\mathcal{A}$-$\mathcal{B}$-bimodule bundle $\mathcal{M}$ to the collection $\mathcal{M}_\alpha := \mathcal{M}|_{U_\alpha}$, together with the canonical identifications $\phi_{\alpha\beta} : \mathcal{M}_\alpha|_{U_\alpha \cap U_\beta} = \mathcal{M}|_{U_\alpha \cap U_\beta} = \mathcal{M}_\beta |_{U_\alpha \cap U_\beta}$. It sends an intertwiner $\psi : \mathcal{M} \to \mathcal{M}'$ between globally defined implementing bimodule bundles to the collection $\psi_{\alpha} := \psi|_{U_{\alpha}}$. 
We have to show that $R_\pi(\mathcal{A},\mathcal{B})$ is an equivalence of categories. 
\\
It is clear that $R_\pi(\mathcal{A},\mathcal{B})$ is fully faithful, basically because bundle morphisms form a sheaf. In order to show that 
$R_\pi(\mathcal{A},\mathcal{B})$ is essentially surjective, we have to show that for any  descent object $(\mathcal{M}_{\alpha},\phi_{\alpha\beta})$  
 there exists a globally defined implementing $\mathcal{A}$-$\mathcal{B}$-bimodule bundle $\mathcal{M}$, together with locally defined invertible intertwiners $\gamma_{\alpha}: \mathcal{M}|_{U_{\alpha}} \to \mathcal{M}_{\alpha}$, such that $\phi_{\alpha\beta}\circ \gamma_{\alpha}=\gamma_{\beta}$; 
in other words, the locally defined bimodule bundles $\mathcal{M}_{\alpha}$ can be glued together. 
Since vector bundles form a stack, it is clear that $\mathcal{M}$ and $\gamma_{\alpha}$ exist as a vector bundle and vector bundle isomorphisms, respectively. Then, the bimodule actions and their local triviality in the sense of \cref{def:bimodulebundle} can be transferred along $\gamma_{\alpha}$ to $\mathcal{M}$. Implementability is a fibrewise property and hence holds for $\mathcal{M}$ just as for $\mathcal{M}_{\alpha}$.
\end{proof}

We close this article, by arguing that six of our eight pre-2-stacks are \emph{not} 2-stacks.
The remaining two pre-2-stacks are in fact 2-stacks, but the proof is deferred to \cite{Kristel2020}.
Let us first describe the assumption on a pre-2-stack that turns it into a 2-stack; namely, that the functor $R_{\pi}$ is essentially surjective, in case of the pre-2-stack $\AlgBdlbi_\C$. We consider again an open cover $\{U_{\alpha}\}_{\alpha\in A}$ of a smooth manifold $X$, and an object in $\Des_{\AlgBdlbi_{\C}}(\pi)$, where $\pi:Y \to X$ is the surjective submersion obtained from the disjoint union of the open sets $U_{\alpha}$. Such an object consists of a family $\mathcal{A}_{\alpha}$ of algebra bundles over $U_{\alpha}$, together with invertible (hence automatically implementing)  $\mathcal{A}_{\beta}$-$\mathcal{A}_{\alpha}$-bimodule bundles $\mathcal{M}_{\alpha\beta}$ over $U_{\alpha}\cap U_{\beta}$, and invertible intertwiners $\mu_{\alpha\beta\gamma}: \mathcal{M}_{\beta\gamma} \otimes_{\mathcal{A}_{\beta}} \mathcal{M}_{\alpha\beta} \to \mathcal{M}_{\alpha\gamma}$ over $U_{\alpha}\cap U_{\beta}\cap U_{\gamma}$ that satisfy the obvious associativity condition over 4-fold intersections. If the pre-2-stack $\AlgBdlbi_{\C}$ was a 2-stack, then there must exist a globally defined algebra bundle $\mathcal{A}$ together with invertible $\mathcal{A}|_{U_\alpha}$-$\mathcal{A}_\alpha$-bimodule bundles $\mathcal{N}_\alpha$ and invertible intertwiners 
\begin{equation*}
  \varphi_{\alpha \beta} : \mathcal{N}_\beta \otimes_{\mathcal{A}_{\beta}}  \mathcal{M}_{\alpha \beta} \longrightarrow \mathcal{N}_\alpha
\end{equation*}
that are compatible with the isomorphisms $\mu_{\alpha \beta \gamma}$ in an obvious way. However, this is not necessarily the case, as the following example shows. 
 
\begin{example}
\label{ex:nota2stack}
Suppose $X$ is a smooth manifold such that there exists a non-torsion element $\xi$ in $\h^3(X,\Z)$; for instance, $X=S^3$. Let $\mathcal{G}$ be a bundle gerbe whose Dixmier-Douady class is $\xi$, see \cite{Murray1996}. Such a bundle gerbe can be represented in terms of an open cover  $\{U_{\alpha}\}_{\alpha\in A}$ and complex line bundles $\mathcal{L}_{\alpha\beta}$ over $U_{\alpha}\cap U_{\beta}$, together with line bundle isomorphisms $\mu_{\alpha\beta\gamma}: \mathcal{L}_{\beta\gamma} \otimes \mathcal{L}_{\alpha\beta} \to \mathcal{L}_{\alpha\gamma}$ over triple intersections, satisfying a cocycle condition over four-fold intersections. Since line bundles are invertible $\underline{\C}$-$\underline{\C}$-bimodule bundles, putting $\mathcal{A}_{\alpha}:= \underline{\C}$ produces an object $(\mathcal{A}_{\alpha},\mathcal{L}_{\alpha\beta},\mu_{\alpha\beta\gamma})$ in $\Des_{\AlgBdlbi_{\C}}(\pi)$. However, this object does not correspond to any globally defined algebra bundle over $X$. Indeed, this would be an algebra bundle $\mathcal{A}$ over $X$ together with invertible $\mathcal{A}$-$\underline{\C}$-bimodule bundles $\mathcal{N}_{\alpha}$  and invertible intertwiners $\varphi_{\alpha\beta}:\mathcal{N}_{\beta} \otimes \mathcal{L}_{\alpha\beta} \to \mathcal{N}_{\alpha}$ compatible with $\mu_{\alpha\beta\gamma}$. Forgetting the left $\mathcal{A}$-action, this structure is known as a  bundle gerbe module for $\mathcal{G}$, and it is finite-dimensional and non-zero. As proved in \cite[Prop. 4.1]{Bouwknegt2002}, this means that the Dixmier-Douady class $\xi$ is torsion, a contradiction. 
\end{example}

It is now straightforward to see that also $\AlgBdlbigrpd \C-$ and $\csAlgBdlbi_{\C}$, as well as  $\sAlgBdlbi_{\C}$, $\sAlgBdlbigrpd \C-$, $\cssAlgBdlbi_{\C}$ are not 2-stacks. For algebra bundles over $\R$, the situation is different, because in that case bundle gerbes always have torsion classes. Indeed, we show in \cite{Kristel2020} that $\cssAlgBdlbi_{\R}$ and $\csAlgBdlbi_{\R}$ are indeed 2-stacks. 
The interested reader is invited to follow our work in \cite{Kristel2020}, where we continue this discussion from the perspective of 2-vector bundles.

\begin{appendix}

\setsecnumdepth{1}

\section{Picard-surjective super algebras}

\setsecnumdepth{1}

\label{sec:pssa}

In this appendix we provide two results about Picard-surjective super algebras that we have not been able to find in the literature. The first is about the preservation of Picard-surjectivity under the tensor product.

\begin{lemma}
\label{lem:tensorproductpicard}
If $A$ and $B$ are central simple Picard-surjective super algebras, then $A \otimes B$ is again Picard-surjective.
\end{lemma}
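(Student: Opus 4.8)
The plan is to reduce Picard-surjectivity of a central simple super algebra to the purely internal condition of possessing an invertible \emph{odd} element, and then observe that such an element is inherited by a graded tensor product.

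\textbf{Step 1: a characterisation.} First I would record the following, which I expect to be the heart of the argument: for a central simple super algebra $C$ over $k$, the map $\Aut(C)\to\mathrm{Pic}(C)$ of \eqref{AutPicMap} is surjective if and only if $C$ has an invertible odd element $c\in C_1$. By \cref{re:classbimodcsa} we know $\mathrm{Pic}(C)=\Z_2=\{[C],[\Pi C]\}$, and $[C]=[C_{\id}]$ is always in the image, so the characterisation amounts to: $[\Pi C]$ is in the image iff $C_1$ contains a unit. For "$\Rightarrow$": if $u\colon C_\phi\to\Pi C$ is an even invertible intertwiner, left $C$-linearity forces $u(x)=x\,u(1)$, and since $u$ is even with target $\Pi C$ the element $c:=u(1)$ is homogeneous of odd degree in $C$; because $u=r_c$ is bijective and $C$ is finite-dimensional, $c$ is a unit. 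For "$\Leftarrow$": given $c\in C_1$ invertible, let $\eta$ be the grading automorphism and put $\phi:=i(c)\circ\eta\in\Aut(C)$ — note plain conjugation $i(c)\colon x\mapsto cxc^{-1}$ by an odd unit is an \emph{even} algebra automorphism, as is $\eta$, so $\phi$ is indeed an even automorphism. A one-line check using $\phi(d)c=(-1)^{|d|}cd$ shows that $x\mapsto xc$, viewed as a map $C_\phi\to\Pi C$, is an even invertible $C$-$C$-bimodule intertwiner, so $[\Pi C]$ lies in the image and $C$ is Picard-surjective.

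\textbf{Step 2: the tensor product.} With Step 1 available, the lemma follows quickly. Since $A$ and $B$ are central simple, so is $A\otimes B$ (the graded tensor product of central simple super algebras is central simple, as recalled after \cref{TheoremCentralSimpleAlgebras}), so $\mathrm{Pic}(A\otimes B)=\Z_2$ by \cref{re:classbimodcsa}. Applying Step 1 to $A$ gives an invertible odd element $a\in A_1$. Then $a\otimes 1$ is homogeneous of odd degree in $A\otimes B$ and invertible with inverse $a^{-1}\otimes 1$ — the Koszul sign in $(a\otimes 1)(a^{-1}\otimes 1)=(-1)^{|1|\,|a^{-1}|}(aa^{-1})\otimes 1=1\otimes 1$ is trivial because the first-slot factor $1$ of the left element is even. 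Applying Step 1 in the reverse direction to $C=A\otimes B$ shows that $A\otimes B$ is Picard-surjective. (In fact only the Picard-surjectivity of $A$ is used; centrality and simplicity of $B$ enter solely to guarantee that $A\otimes B$ is central simple, hence that its Picard group is $\Z_2$.)

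\textbf{Expected obstacle.} There is no deep difficulty here once Step 1 is in place; the only thing requiring genuine care is the bookkeeping of the $\Z_2$-gradings — identifying even maps $C_\phi\to\Pi C$ with parity-reversing maps $C\to C$, checking that $i(c)\circ\eta$ is a well-defined even automorphism, and confirming that the graded-tensor-product multiplication produces no sign obstruction to $a\otimes 1$ being a unit. Assembling Step 1 itself is routine given \cref{re:classbimodcsa} and the standard identifications already used in the proof of \cref{prop:classcssa}.
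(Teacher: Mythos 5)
Your proof is correct, but it takes a more roundabout route than the paper's. You first establish the intermediate characterisation that a central simple super algebra $C$ is Picard-surjective if and only if $C_1$ contains a unit, and then transport an odd unit of $A$ across the tensor product via $a\mapsto a\otimes 1$. The paper bypasses any internal characterisation: it uses Picard-surjectivity of $B$ directly to pick an isomorphism $\rho\colon\Pi B\to B_\varphi$ of $B$-$B$-bimodules, and then tensors with $\id_A$ to obtain $\Pi(A\otimes B)\cong A\otimes\Pi B\cong A\otimes B_\varphi\cong(A\otimes B)_{\id_A\otimes\varphi}$. The paper's argument is shorter and sidesteps the sign bookkeeping around $i(c)\circ\eta$, while yours yields a concrete reformulation of Picard-surjectivity for central simple super algebras (existence of an odd unit) that could be of independent use. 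Both proofs, as you note, use only one of the two Picard-surjectivity hypotheses; the paper happens to use $B$, you use $A$, and in both cases the other algebra enters only to ensure that $A\otimes B$ is again central simple, hence has $\mathrm{Pic}=\Z_2$.
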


\begin{proof}
$A \otimes B$ is again central simple, hence every invertible $(A \otimes B)$-bimodule is isomorphic to either $A \otimes B$ or $\Pi(A \otimes B)$. 
Therefore we only need to show that $\Pi(A \otimes B)$ is induced by an automorphism.
Since $B$ is Picard-surjective, there exists $\varphi \in \Aut(B)$ is such that $\Pi B \cong B_\varphi$ as $B$-$B$-bimodules. 
One now checks that the map $\id \otimes \rho$ provides an isomorphism $\Pi(A\otimes B) \cong (A \otimes B)_{1 \otimes \varphi}$,
where $\rho: \Pi B \to B_\varphi$ is any isomorphism of $B$-$B$-bimodules.
\end{proof}

Being Picard-surjective is not a Morita invariant. However, our second result is the following, whose proof is based on \cite{Rickard}. 

\begin{proposition}
\label{prop:picsur}
Every Morita equivalence class of super algebras contains a Picard-surjective super algebra. 
\end{proposition}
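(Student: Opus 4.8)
The plan is to show that, given any super algebra $A$, there is a Morita equivalent super algebra $B$ for which the canonical map $\Aut(B) \to \mathrm{Pic}(B)$ of \cref{AutPicMap} is surjective. The idea, following \cite{Rickard}, is that the obstruction to Picard-surjectivity disappears once one passes to a sufficiently large matrix algebra over $A$ (together with a choice of grading), and such matrix algebras are Morita equivalent to $A$. More precisely, I would first recall that $\mathrm{Pic}(B)$ is a Morita invariant, so for $B = M_{n}(A) \otimes \Cl_{0}$ (or an appropriate graded enlargement) the group $\mathrm{Pic}(B) \cong \mathrm{Pic}(A)$ is independent of the enlargement; what changes is the image of $\Aut(B)$.

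The key step is the following. Every invertible $A$-$A$-bimodule $M$ is, by \cref{PropositionInvertibleImplementing} and its proof, finitely generated projective both as a left and as a right $A$-module; in particular $M \oplus M'$ is free for some complementary bimodule-summand, or rather $M$ becomes a direct summand of a free module of some rank $n$. The point is that an invertible bimodule $M$ with $M \oplus (\text{something}) \cong A^{n}$ can be realised, after enlarging $A$ to $B = M_{n}(A)$ (possibly with a suitable $\Z_{2}$-grading to account for the parity-twisted summands $\Pi A$), as $B_{\phi}$ for an automorphism $\phi$ of $B$: concretely, left multiplication by a suitable invertible matrix, or an automorphism built from the isomorphism $M \otimes_{A} M^{-1} \cong A$ written out in terms of generators. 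Thus one shows: for each finite set of invertible bimodules there is an enlargement $B$ of $A$ in which all of them become of the form $B_{\phi}$. Since $\mathrm{Pic}(A)$ is a finite group (the algebras are finite-dimensional over $k$, so there are only finitely many isomorphism classes of invertible bimodules of bounded dimension, and invertibility bounds the dimension), a single enlargement $B$ suffices to make the map $\Aut(B) \to \mathrm{Pic}(B) \cong \mathrm{Pic}(A)$ hit every class. Hence $B$ is Picard-surjective and Morita equivalent to $A$.

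Concretely, the steps in order: (1) observe $\mathrm{Pic}$ is a Morita invariant and that $\mathrm{Pic}(A)$ is a finite group for finite-dimensional $A$; (2) for a single invertible bimodule $M$, use projectivity (from \cref{PropositionInvertibleImplementing}) to embed $M$ as a direct summand of $A^{r} \oplus \Pi A^{s}$ via an even idempotent $p \in M_{n}(A)$, $n = r+s$; (3) identify $B := \mathrm{End}_{k\text{-}A}(M \oplus M^{\vee} \oplus \dots)$ or more simply $B := M_{n}(A)$ with a suitable grading, which is Morita equivalent to $A$, and check that under this identification $M$ corresponds to a bimodule of the form $B_{\phi}$ — this is the computation that the summand $p(A^{r}\oplus\Pi A^{s})$, viewed as a $B$-$B$-bimodule via the two natural actions, is twisted-trivial because $M$ is \emph{invertible} (so faithfully balanced), which lets one transport the left action to the right action by an automorphism; (4) run this simultaneously for representatives of all finitely many classes in $\mathrm{Pic}(A)$, taking $n$ large enough to accommodate all of them, to conclude surjectivity of \cref{AutPicMap} for $B$.

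The main obstacle I expect is step (3): making precise \emph{why} an invertible bimodule, once realised inside a large enough matrix algebra, is forced to be twisted-trivial. The cleanest route is probably to use that an invertible $A$-$A$-bimodule $M$ is faithfully balanced, so that $A \cong \mathrm{End}_{k\text{-}A}(M)$ acting by left multiplication; then, choosing a right $B$-module isomorphism $M \cong pB$ inside $B = M_{n}(A)$ and transporting, the left $A$-action becomes left multiplication by $pM_{n}(A)p \cong A$ conjugated into $B$, and one must check this conjugation is implemented by an \emph{inner} automorphism of $B$ — equivalently that the two copies of $A$ sitting inside $B$ (the diagonal one and the one coming from $\mathrm{End}_{k\text{-}A}(M)$) are inner conjugate. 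This is exactly the kind of statement that can fail for general projective modules (compare the \texttt{comment} block after \cref{PropositionInvertibleImplementing}), but holds here precisely because $M$ is invertible rather than merely projective; pinning down the argument in the graded setting, keeping track of the parity twists $\Pi$, is where the real work lies. The grading bookkeeping — ensuring $\Pi A$-type classes are also captured, which may require tensoring with $\Cl_{1}$ or introducing an odd shift in the grading of $B$ — is a secondary but non-trivial point.
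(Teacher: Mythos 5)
Your proposed route does not work, and the failure is at the heart of the argument, not a bookkeeping detail. Passing to matrix algebras $M_{n}(A)$ (even with graded decorations) never repairs Picard-surjectivity, because matrix amplification scales all multiplicities uniformly and therefore cannot change the \emph{relative} sizes of the blocks; but it is precisely an imbalance between blocks that causes the map $\Aut(A)\to\mathrm{Pic}(A)$ to fail to be surjective. The paper's own cited example makes this concrete: for $A = k\oplus M_{2}(k)$ (Bolla, Example 1.7), every automorphism preserves each block because $k\not\cong M_{2}(k)$ as algebras, so $\Out(A)=1$; yet the Morita equivalence $k\simeq M_{2}(k)$ yields an invertible ``swap'' $A$-$A$-bimodule that is not of the form $A_{\phi}$, so $\mathrm{Pic}(A)\neq 1$. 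Now $M_{n}(A)=M_{n}(k)\oplus M_{2n}(k)$ has the identical defect for every $n$: the two matrix sizes are still different, so $\Out(M_{n}(A))=1$ while $\mathrm{Pic}(M_{n}(A))\cong\mathrm{Pic}(A)$ still contains the swap. Thus step (3) of your plan is false as stated, and no choice of $n$ will make $M_{n}(A)$ Picard-surjective. Your instinct that the real work is in showing the two embeddings of $A$ into $B$ are inner conjugate is correct, but you cannot force this by making $B$ large — you have to make $B$ \emph{balanced}.

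The correct replacement — and what the paper does, following \cite{Rickard} — is to pass to the ``basic'' (multiplicity-free) representative of the Morita class: take $P=\bigoplus_{i} P_{i}$, one representative from each isomorphism class of indecomposable finitely generated projective left $A$-module, and set $E=\underline{\End}_{A-k}(P)^{\opp}$. Then $E$, as a left module over itself, decomposes into indecomposable projectives \emph{multiplicity-freely}, and this is the property that forces every additive autoequivalence (in particular $R\otimes_{E}-$) to send $E$ to $E$ as a left module, whence $R\cong E_{\varphi}$. For $A=k\oplus M_{2}(k)$ this produces $E\cong k\oplus k$, where the swap \emph{is} realized by an algebra automorphism. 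Your alternative suggestion $B=\End_{k\text{-}A}(M\oplus M^{\vee}\oplus\cdots)$ is closer in spirit, but summing invertible bimodules and their duals does not, in general, produce a multiplicity-free regular representation, so it does not give the lemma you need. Finally, the finiteness claim for $\mathrm{Pic}(A)$ in step (1) is also false: since $\Out_{0}(A)$ injects into $\mathrm{Pic}(A)$ (via \cref{lem:framing:d}), and e.g.\ $\Out_{0}(\Lambda k)\cong k^{\times}$ is infinite, so is $\mathrm{Pic}(\Lambda k)$. That particular error is moot once the matrix-algebra strategy is dropped, but it reflects a misapprehension about the size of the object being controlled.
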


The proof uses the following lemma.

\begin{lemma} \label{Lemma:PicardSurjective}
Let $A$ be a super algebra and suppose that, as a left module over itself, it is a direct sum $A = P_1 \oplus \cdots \oplus P_n$, where $P_1, \dots, P_n$ are indecomposable finitely generated projective left $A$-modules, one from each isomorphism class. Then $A$ is Picard-surjective.
\end{lemma}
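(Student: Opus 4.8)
\textbf{Proof plan for \cref{Lemma:PicardSurjective}.}
The plan is to use the standard Morita-theoretic description of the Picard group. Given an invertible $A$-$A$-bimodule $M$, it is in particular a finitely generated projective left $A$-module (this follows from invertibility, exactly as recalled in the proof of \cref{PropositionInvertibleImplementing}), and tensoring by $M$ is a self-equivalence of the category of left $A$-modules. The key point is that such a self-equivalence must permute the isomorphism classes of indecomposable finitely generated projective left modules. First I would decompose $M$ as a left $A$-module into indecomposables; since $M \otimes_A -$ is an equivalence and sends $A = P_1 \oplus \cdots \oplus P_n$ (as a left module) to $M$, and equivalences preserve indecomposability and direct sums, $M$ must itself be a direct sum of one indecomposable projective from each isomorphism class, i.e.\ $M \cong A$ \emph{as a left $A$-module} (up to reordering the summands, and possibly up to a parity shift which I will need to track in the graded setting — more on this below).

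Next I would promote this left-module isomorphism to a bimodule statement. Choose an isomorphism of left $A$-modules $g : A \to M$. Transport the right $A$-action on $M$ back along $g$: for $a \in A$, the assignment $a \mapsto g^{-1}(g(1) \ract a)$ is determined by where $1$ goes, so define $\phi : A \to A$ by $\phi(a) := g^{-1}(g(1)\ract a)$. One checks that $\phi$ is an algebra homomorphism (using that $g$ is a left $A$-module map and that the two actions on $M$ commute) and that $g$ becomes an isomorphism of $A$-$A$-bimodules $A_\phi \cong M$. Finally, since $M$ is invertible, the bimodule $A_\phi$ is invertible, and by \cref{LammaIdentityBimoduleInvertible} this forces $\phi$ to be an isomorphism, i.e.\ $\phi \in \Aut(A)$. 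Hence $M \cong A_\phi$ for some $\phi \in \Aut(A)$, which is exactly Picard-surjectivity (\cref{DefinitionPicardSurjective}).

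The main obstacle — and the only place the $\Z_2$-grading genuinely intervenes — is bookkeeping of parity. A priori the left-module isomorphism $M \cong A$ obtained above is only an isomorphism of ungraded (or up-to-parity-shift) left modules: the indecomposable summands $P_i$ and $\Pi P_i$ may or may not be isomorphic, and $M \otimes_A -$ need not preserve the grading on the nose. I expect to handle this by working with the graded module category throughout: one replaces ``indecomposable finitely generated projective left $A$-modules, one from each isomorphism class'' by the corresponding graded statement, observing that $M \otimes_A -$ is a \emph{graded} equivalence (it is even, as our intertwiners are), so it permutes graded-isomorphism classes of graded indecomposable projectives. If $\Pi P_i \cong P_i$ for all $i$ the argument goes through verbatim; otherwise one absorbs the parity discrepancy into the choice of $g$ (replacing $g$ by $g$ composed with the grading involution where needed) so that $g$ is an even isomorphism $A_\phi \to M$ with $\phi$ even, as required for $\phi \in \Aut(A)$. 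I would also remark that the hypothesis ``one from each isomorphism class'' is what makes the argument clean: it guarantees that the permutation of isomorphism classes induced by $M \otimes_A -$ lifts to an honest automorphism realized on the single module $A$ itself, rather than merely to an abstract bijection of classes.
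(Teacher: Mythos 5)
Your proof is correct and follows essentially the same route as the paper: an invertible $A$-$A$-bimodule $M$ induces a Morita self-equivalence $M \otimes_A -$ that permutes the graded isomorphism classes of indecomposable finitely generated projective left $A$-modules, forcing $M \cong A$ as graded left $A$-modules, after which transporting the right $A$-action across this isomorphism yields $\phi$ with $M \cong A_\phi$, and \cref{LammaIdentityBimoduleInvertible} (or, equivalently, faithful balancedness of the invertible bimodule $M$) gives $\phi \in \Aut(A)$. The parity-shift bookkeeping you worry about at the end is not actually needed: $M \otimes_A -$ does preserve the grading, so once ``isomorphism class'' is read as \emph{graded} isomorphism class (which is how the hypothesis is meant and how it is arranged in the proof of \cref{prop:picsur}), the shift $\Pi P_i$, if not even-isomorphic to $P_i$, simply appears among the other $P_j$ in the decomposition of $A$, the permutation argument is unaffected, and the resulting left-module isomorphism $g \colon A \to M$ is automatically even, so there is no discrepancy to absorb.
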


\begin{proof}
Let $R$ be an invertible $A$-$A$-bimodule. We want to show that $M \cong A_\varphi$ for some $\varphi \in \mathrm{Aut}(A)$. 
To this end, the functor
\begin{equation*}
F : \Bimod_{A-k} \to \Bimod_{A-k},\qquad M \mapsto R \otimes_A M
\end{equation*}
which is an additive autoequivalence, as $R$ is invertible. Therefore, it preserves the properties of being indecomposable, finitely generated or projective.
\begin{comment}
The latter to are the categorical properties of projectivity and compactness: an object $x$ in a category is \emph{projective} if the $\mathscr{S}\mathrm{et}$-valued functor $\Hom(x, -)$ preserves epimorphisms, and \emph{compact} if that functor preserves filtered colimits. 
As explained in the blogposts \href{https://qchu.wordpress.com/2015/03/28/projective-objects/}{(a)} and \href{https://qchu.wordpress.com/2015/04/25/compact-objects/}{(b)} of Qiaochu Yuan, these notions coincide with the usual ones of projectivity and finitely generatedness of modules in our case.
\end{comment}
It follows that the functor only permutes the modules $P_1, \dots, P_n$, up to isomorphism. Explicitly, there exists some permutation $\sigma \in S_n$ such such that $P_i \cong P_{\sigma_i}$ for each $i=1, \dots, n$. 
We obtain that as left $A$-modules, 
\begin{equation*}
R \cong R \otimes_A A =  F(A) \cong P_{\sigma_1} \oplus \dots \oplus P_{\sigma_n} \cong P_1 \oplus \dots \oplus P_n \cong A 
\end{equation*}
The right $A$-module structure on $R \cong A$ is given by some algebra homomorphism
\begin{equation*}
\varphi:  A^{\opp} \to  \underline{\End}_{A-k}(R) \cong \underline{\End}_{A-k}(A) \cong A^{\opp}.
\end{equation*}
An algebra homomorphism $A^{\opp} \to A^{\opp}$ is the same as an algebra homomorphism $A \to A$, and we obtain $R \cong A_\varphi$ as $A$-$A$-superbimodule, where $\varphi$ is this homomorphism.
\begin{comment}
Explicitly, let $\alpha: R \to A$ be an isomorphism of left $A$-modules and let 
\begin{equation*}
\begin{aligned}
&{\rho}: A^{\opp} \to \underline{\End}_{A-k}(R), & {\rho}(a)(x) &= (-1)^{|a||x|}x \ract a \\
&\tilde{\rho}: A^{\opp} \to \underline{\End}_{A-k}(A), & {\rho}(a)(x) &= (-1)^{|a||x|}x \cdot a.
\end{aligned}
\end{equation*}
Then $\varphi$ is caracterized by the property $\tilde{\rho}({\varphi(a)}) = \alpha \circ \rho(a) \circ \alpha^{-1}$ for all $a \in A$. We obtain that
\begin{equation*}
\begin{aligned}
  \alpha(x \ract a) &= (-1)^{|a||x|} \alpha( \rho(a)(x)) \\
  &= (-1)^{|a||x|} (\alpha \circ \rho(a) \circ \alpha^{-1})(\alpha(x)) \\
  &= (-1)^{|a||x|} \tilde{\rho}(\varphi(a))(\alpha(x)) \\
  &= \alpha(x) \cdot \varphi(a) \\
  &= \alpha(x) \ract_{\varphi} a.
\end{aligned}
\end{equation*}
This shows that $\alpha: R \to A_\varphi$ respects the bimodule action.
\end{comment}
\end{proof}

\begin{proof}[Proof of \cref{prop:picsur}]
Let $A$ be a super algebra and let $P_1, \dots, P_n$ be a collection of the indecomposable, finitely generated,  projective left $A$-modules, one from each isomorphism class. We will show that the super algebra
\begin{equation*}
 E:= \underline{\End}_{A-k}(P)^{\opp}, \qquad P = \bigoplus_{i=1}^n P_i,
\end{equation*}
is Morita equivalent to $A$ and satisfies the assumptions of \cref{Lemma:PicardSurjective}, hence is Picard-surjective.
\\
We claim that $P$ is a generator of the module category $\Bimod_{A-k}$, meaning that for all non-zero homomorphisms $\varphi : M \to N$ of left $A$-modules, there exists $\alpha: P \to M$ such that $\varphi \circ \alpha \neq 0$. 
Assume first that $M = A$, and let $\varphi: A \to N$ be a non-zero homomorphism of left $A$-modules.
Since $A$ is finite-dimensional, it is a direct sum of finitely many indecomposable finitely generated modules. 
Because every direct summand of a projective module is projective each of these summands is moreover projective.
In total, each of these direct summands of $A$ must be isomorphic to one of the $P_i$; 
we obtain
\begin{equation*}
A = P_1^{k_1} \oplus \dots \oplus P_n^{k_n}
\end{equation*}
for some numbers $k_1, \dots, k_n \in \N_0$. 
For $1 \leq i \leq n$, $1 \leq a \leq k_i$, Let $\alpha_{i, a}: P \to M$ be the composition of the projection $P \to P_i$ with the inclusion as the $a$-th $P_i$ summand of $M$. 
If $\varphi \circ \alpha_{i, a} \neq 0$ for some $i$ and $a$, we have detected non-triviality of $\varphi$.
Otherwise, we obtain that $\mathrm{im}\,\alpha_{i, a} \subset \ker(\varphi)$ for all $i$ and $a$. 
However, this implies that $\ker(\varphi) = A$, so $\varphi =0$, a contradiction.
The same argument works in the case that $M = \Pi A$.
Finally, let $M$, $\varphi: M \to N$ a homomorphism be arbitrary and $x \in M$ with $\varphi(x) \neq 0$. 
Splitting $x = x_0 + x_1$ in its homogeneous components, since $\varphi$ is even, we have either $\varphi(x_0) \neq 0$ or $\varphi(x_1) \neq 0$. 
Let $\psi_0: A \to M$ and $\psi_1: \Pi A \to M$ be the homomorphism sending $1$ to $x_0$, respectively $x_1$. 
Now at least one of the compositions $\varphi \circ \psi_0$ or $\varphi \circ \psi_1$ is non-zero, so we have reduced to one of  the previous cases.
\\
Since $P$ is a finitely generated projective generator of the category of right $A$-modules, the functor
\begin{equation*}
F : \Bimod_{A, k} \to \Bimod_{E, k}, \qquad M \to \underline{\Hom}_{A-k}(P, M) %, \qquad P \otimes_E - :    \Bimod_{E-k} \to \Bimod_{A-k},
\end{equation*}
is an equivalence of left module categories \cite[Thm.~3.2.1]{kwok2013super}, hence $E$ is Morita-equivalent to $A$.
$F$ sends the left $A$-module $P$ to the left $E$-module $E$. Since $P$ is a direct sum of indecomposable finitely generated projective modules, one from each isomorphism class, and $F$ is an equivalence, the same is true for $E$. Hence as a left $E$-module,
\begin{equation*}
  E \cong Q_1 \oplus \dots \oplus Q_n
\end{equation*}
for indecomposable, finitely generated, projective left $E$-modules $Q_1, \dots, Q_n$, one from each isomorphism class. This shows that $E$ satisfies the assumptions of \cref{Lemma:PicardSurjective} and finishes the proof.
\end{proof}

\begin{comment}
\section{Crossed modules and butterflies}
\end{comment}

\label{sec:crossedmodules}
\label{sec:butterflies}

\end{appendix}

\bibliography{bibfile}
\bibliographystyle{kobib}

\end{document}